\theoremstyle{plain}
\newtheorem{thm}{Theorem}[section]
\newtheorem{prop}[thm]{Proposition}
\newtheorem{cor}[thm]{Corollary}
\newtheorem{lem}[thm]{Lemma}
\theoremstyle{definition}
\newtheorem{definition}[thm]{Definition}
\newtheorem{example}[thm]{Example}
\theoremstyle{remark}
\newtheorem*{rmk}{Remark}
\newtheorem*{cl}{Claim}
\begin{document}

\title{\textsc{Injective Metrics on Cube Complexes}}
\author{Benjamin Miesch}
\date{\vspace{4ex}}
\maketitle

\begin{abstract}
	For locally finite CAT(0) cube complexes it is known that they are injectively metrizable choosing the $l_\infty$-norm on each cube. In this paper we show that cube complexes which are injective with respect to this metric are always CAT(0). Moreover we give a criterion for finite dimensional CAT(0) cube complexes with finite width to posses an injective metric. As a side result we prove a modification of Bridson's Theorem for cube complexes saying that finite dimensional cube complexes with $l_p$-norms on the cubes are geodesic.
\end{abstract}



\section{Introduction}


	A metric space $(X,d)$ is \emph{injective} if for every isometric embedding $\iota \colon A \hookrightarrow Y$ of metric spaces and every 1-lipschitz map $f \colon A \to X$ there is some 1-lipschitz map $\bar f \colon Y \to X$ such that $f = \bar f \circ \iota$. For instance complete real trees and $l_\infty^n$ are injective metric spaces.
	
	Furthermore cube complexes can be seen as a generalization of trees to higher dimensions. Especially CAT(0) cube complexes play an important role in recent research in geometric group theory as in Agol's proof of the Virtual Haken Conjecture \cite{agol}.
	
	The aim of this work is to investigate under which assumption a cube complex admits an injective metric. For finite median cube complexes it is known from work of van de Vel, Mai and Tang \cite{mai,vel} that they are collapsible and therefore admit an injective metric. But median cube complexes are the same as CAT(0) cube complexes as Chepoi showed in \cite{chepoi}. An easy argument then also shows that proper, i.e. locally finite, CAT(0) cube complexes are injectively metrizable.
	
	As we see there, choosing the $l_\infty$-norm on cubes provides a natural candidate for an injective metric on the cube complex. In section~\ref{sec:geodesic} we prove that for finite dimensional cube complexes this choice always results in a complete geodesic space. In fact we prove that this is true for any $l_p$-norm.  
	
\begin{thm}\label{thm:geodesic}
	Let $\mathcal{C}$ be a finite dimensional cube complex and let $d_p$ be the metric on $\mathcal{C}$ induced by the $l_p$-norm for $p \in [1,\infty]$. Then $(|\mathcal{C}|,d_p)$ is a complete geodesic space.
\end{thm}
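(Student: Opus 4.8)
The plan is to reprove, in the present $l_p$-setting, Bridson's theorem that a polyhedral complex with only finitely many isometry types of cells is a complete geodesic space; the point is that his argument only uses a handful of features of the building blocks, all of which the cubes $([0,1]^k,\|\cdot\|_p)$ share. So I would first record these features: each such cube is \emph{compact}; it is a geodesic space in which the straight segment joining two points is a geodesic of the right length (it is a convex subset of the normed space $(\mathbb{R}^k,\|\cdot\|_p)$, so the segment stays inside and has length $\|\,\cdot\,\|_p$ of the difference, while no path in it can be shorter); it has finitely many faces, each isometrically embedded and again a cube of the same kind; and, since $\mathcal{C}$ is finite dimensional, only the finitely many shapes $([0,1]^k,\|\cdot\|_p)$ with $0\le k\le\dim\mathcal{C}$ occur in $\mathcal{C}$. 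Then I would make $d_p$ explicit as the associated length (pseudo)metric: a \emph{string} from $x$ to $y$ is a finite sequence $x=x_0,x_1,\dots,x_m=y$ together with cubes $C_i$ containing $x_{i-1}$ and $x_i$, its length is $\sum_{i=1}^m\|x_{i-1}-x_i\|_p$, and $d_p(x,y)$ is the infimum of the lengths of strings from $x$ to $y$. Symmetry, the triangle inequality and $d_p(x,x)=0$ are immediate, so what remains is that $d_p$ separates points, is complete, and is geodesic.

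The next step is the local analysis. For $x\in|\mathcal{C}|$ let $\mathrm{St}(x)$ be the union of all closed cubes containing the carrier of $x$ (the unique cube whose relative interior contains $x$). I would prove the key lemma: there is $\varepsilon(x)>0$ such that every string issuing from $x$ of length less than $\varepsilon(x)$ stays inside $\mathrm{St}(x)$ --- equivalently, $x$ has positive distance from every closed cube that does not contain its carrier --- and that, inside $\mathrm{St}(x)$, every point $z$ with $d_p(x,z)<\varepsilon(x)$ is joined to $x$ by a geodesic made of boundedly many straight pieces. This is proved by induction on dimension, using compactness of the finitely many cube shapes to pass from faces to cubes. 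Two consequences follow at once: $d_p(x,y)>0$ whenever $x\ne y$, so $d_p$ is a genuine metric whose topology coincides with the weak topology on $|\mathcal{C}|$; and $(|\mathcal{C}|,d_p)$ is locally geodesic. For completeness, a $d_p$-Cauchy sequence $(x_n)$ has arbitrarily small tails, which by the lemma get trapped in the star of one of their members; if the relevant numbers $\varepsilon(x_n)$ stay away from $0$ one concludes at once from compactness of a single closed cube, and otherwise the sequence is shadowed by a Cauchy sequence lying in a proper face, so one recurses --- and since $\mathcal{C}$ is finite dimensional the recursion terminates on the $0$-skeleton, which is discrete because distinct vertices are at distance at least $1$.

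Finally, to produce a geodesic between arbitrary points $x,y$ with $D:=d_p(x,y)$, I would take strings from $x$ to $y$ of length tending to $D$ and replace each by a \emph{taut} string --- one admitting no local shortening, obtained by straightening each piece and sliding the break points within the faces that carry them. The crux is then to show that a taut near-minimizing string from $x$ to $y$ can be taken to use only $N=N(D,\dim\mathcal{C})$ cubes: informally, such a string breaks into subsegments of length below a uniform threshold coming from the local lemma, and each of those is carried by boundedly many cubes, so the total combinatorial complexity is bounded in terms of $D$ and the dimension. Granting this, one passes to a subsequence of the minimizing strings along which the (bounded) pattern of cubes and incidences stabilizes and then minimizes the positions of the finitely many break points inside the relevant \emph{compact} cubes; this yields an honest minimizing string, which straightens to a geodesic. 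I expect this last step --- extracting a limiting geodesic from a minimizing sequence of strings \emph{without} local finiteness, by first pinning down the finitely many cubes involved --- to be the main obstacle, and it is precisely the combinatorial part of Bridson's argument (taut strings together with finiteness of shapes) that has to be re-engineered for $l_p$-cubes rather than merely quoted.
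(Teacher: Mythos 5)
Your overall strategy coincides with the paper's (and Bridson's): pass to taut strings, bound the number of cubes a near-minimizing taut string can traverse, use the finiteness of isometry types of subcomplexes to extract an honest shortest $m$-string, and straighten it. The local analysis and the completeness discussion are fine. But there is a genuine gap at exactly the step you yourself flag as the main obstacle: you never prove that a taut string of bounded length uses boundedly many cubes, and for $p=1$ and $p=\infty$ your notion of taut --- ``admitting no local shortening'' --- is not strong enough to make this true. These norms are not strictly convex, so the metric interval $I(x_{i-1},x_{i+1})$ inside a union of two cubes is a large set; a string can satisfy $x_i\in I(x_{i-1},x_{i+1})$ at every break point while its coordinates oscillate, and the development/pigeonhole argument that in the Euclidean case forces a locally unshortenable string through many cubes to be long breaks down. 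This is precisely the point the paper singles out in the remark after Definition~\ref{def:taut}.

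The paper's fix, which is the one genuinely new ingredient relative to Bridson and is absent from your proposal, is to strengthen the definition of tautness: after introducing compatible coordinates along the chain of cubes carrying the string, one additionally requires each coordinate $x_i^l$ to be monotone in $i$ (condition (iii) of Definition~\ref{def:taut}). With this extra condition a pigeonhole argument (Lemma~\ref{lem:taut strings are long}) shows that every taut $(n+2)$-string in an $n$-dimensional complex has length at least $1$, giving the linear lower bound $l(\Sigma)\geq\alpha m-1$ and hence your $N=N(D,\dim\mathcal{C})$. The price is Lemma~\ref{lem:tauten minimal m-strings}: one must show that a length-minimizing $m$-string can be rearranged, without increasing its length, into a taut string in this stronger sense; this is where the coordinatewise monotonicity of the $l_p$-norms, $\|\bar x\|\leq\|x\|$ whenever $|\bar x^l|\leq|x^l|$ for all $l$, is actually used (truncating or reflecting coordinates does not lengthen any segment). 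Without the strengthened definition and this rearrangement lemma, the crucial quantitative bound remains unproved. A minor further point: in a non-locally-finite complex the actual subcomplexes carrying your minimizing strings need not stabilize along a subsequence; what one should say is that their isometry type (marked by the endpoints) takes only finitely many values, which is the content of Lemma~\ref{lem:shortst m-string in finite dimensional complex}.
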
	
	
	This is a generalization of Bridson's Theorem \cite{bri} in the case of cube complexes.

	Gromov's link condition \cite{gromov} is a fundamental tool in proving the CAT(0) property of cube complexes. It says that a cube complex (with metric $d_2$) is CAT(0) if and only if it is simply-connected and all vertex links are flag. Using this we show in section~\ref{sec:injective} that $\mathcal{C}$ is CAT(0) whenever $(|\mathcal{C}|,d_\infty)$ is injective.
	
\begin{thm}\label{thm:CAT(0)}
	If the cube complex $(|\mathcal{C}|,d_\infty)$ is an injective metric space, then $(|\mathcal{C}|,d_2)$ is CAT(0).
\end{thm}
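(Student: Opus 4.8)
The plan is to verify Gromov's link condition for $(|\mathcal{C}|,d_2)$, namely that $|\mathcal{C}|$ is simply connected and that the link of every vertex is a flag simplicial complex; injectivity of $(|\mathcal{C}|,d_\infty)$ will supply both, and the topological content is unchanged when passing from $d_\infty$ to $d_2$.

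Simple connectivity is the easy half. An injective metric space is contractible: $(|\mathcal{C}|,d_\infty)$ embeds isometrically into a Banach space, and injectivity of $|\mathcal{C}|$ exhibits it as a $1$-Lipschitz retract of that (contractible) space, and a retract of a contractible space is contractible; a contractible space is simply connected.

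For the flag condition I would argue by contraposition. Suppose the link of some vertex $v$ is not flag and pick a minimal non-face: edges $e_1,\dots,e_n$ at $v$ with $n\ge 3$ such that every proper subset of $\{e_1,\dots,e_n\}$ spans a cube but $\{e_1,\dots,e_n\}$ does not span an $n$-cube. The cubes spanned by subsets of $\{e_1,\dots,e_n\}$ form a subcomplex which, with the intrinsic $d_\infty$-metric, is a scaled copy of $Y_n:=\{x\in[0,1]^n:\min_i x_i=0\}$. I would first show $Y_n$ is not hyperconvex. Let $p_i$ be the point whose $i$-th coordinate is $0$, whose $(i{+}1)$-st coordinate is $1$, and whose remaining coordinates all equal $\tfrac{1}{2}$ (indices mod $n$). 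Concatenating two straight segments through the face $\{x_i=x_{i+1}=0\}$ gives $d_{Y_n}(p_i,p_j)\le\tfrac{3}{2}$ for all $i\neq j$, so the closed balls $\bar B(p_i,\tfrac{3}{4})$ pairwise intersect; on the other hand any $m\in Y_n$ has $m_j=0$ for some $j$, and since the $\ell_\infty$-length of a path dominates the total variation of each coordinate, $d_{Y_n}(m,p_{j-1})\ge|(p_{j-1})_j-m_j|=1>\tfrac{3}{4}$. Hence $\bigcap_i\bar B(p_i,\tfrac{3}{4})=\varnothing$, so $Y_n$ is not injective.

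It remains to transfer this obstruction to $\mathcal{C}$. Closed balls in a hyperconvex space are hyperconvex, hence injective, so if $(|\mathcal{C}|,d_\infty)$ were injective then so would be every small ball $\bar B_{d_\infty}(v,\varepsilon)$. Such a ball is isometric to the corresponding ball in the ``corner cone'' $C_v$ determined by the link of $v$, and the coordinate projection onto the $e_1,\dots,e_n$-directions is a well-defined $1$-Lipschitz retraction of $C_v$ onto $Y_n$ — it actually lands in $Y_n$ precisely because $\{e_1,\dots,e_n\}$ is not a face of the link. Since a $1$-Lipschitz retract of an injective space is injective, $Y_n$ would be injective, contradicting the previous paragraph. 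The step I expect to be the main obstacle is exactly this last transfer: constructing the local cone $C_v$ and checking that a small $d_\infty$-ball at $v$ is genuinely isometric to a ball in it (so that the distances fed into the retraction argument are the real $d_\infty$-distances), and verifying that the retraction together with the coordinate/length estimate goes through uniformly for all $n\ge 3$.
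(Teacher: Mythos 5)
Your proposal is correct in substance and rests on the same mechanism as the paper --- a missing cube forces a family of pairwise ``compatible'' balls with empty intersection --- but it packages the argument differently. The paper verifies the combinatorial link condition in the form ``three $(k+2)$-cubes pairwise sharing $(k+1)$-faces lie in a $(k+3)$-cube'' and produces three bad balls directly inside $|\mathcal{C}|$, using the taut-string Lemma~\ref{lem1} to show that a would-be common point would have to lie in a cube containing both $C$ and $A\cap B$. You instead reduce to the model space $Y_n=\{x\in[0,1]^n:\min_i x_i=0\}$ attached to a minimal non-face of size $n$, kill its hyperconvexity by an explicit $n$-ball configuration (your estimates $d_{Y_n}(p_i,p_j)\le \tfrac32$ and $d_{Y_n}(m,p_{j-1})\ge 1$ both check out), and pull the obstruction back via the coordinate-forgetting retraction of the corner cone onto $Y_n$, which indeed lands in $Y_n$ precisely because $\{e_1,\dots,e_n\}$ spans no cube. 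This buys uniformity in $n$ and a clean separation between the combinatorial model and the metric estimates, whereas the paper only ever needs the case of three cubes but must do the metric work inside $\mathcal{C}$ itself. The surrounding facts you invoke (injective spaces are contractible, closed balls and $1$-Lipschitz retracts of hyperconvex spaces are hyperconvex) are all standard and correctly applied.

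The step you flag as the main obstacle is a genuine gap as written, and it is exactly the point where the paper's Lemma~\ref{lem1} earns its keep: you must show that for small $\varepsilon$ the ball $\bar B_{d_\infty}(v,\varepsilon)$, \emph{with the metric induced from} $|\mathcal{C}|$, coincides with the $\varepsilon$-corner cone carrying its intrinsic length metric --- i.e.\ that the ball lies in the star of $v$ and that no path through the rest of $|\mathcal{C}|$ shortcuts two points of the corner. Both assertions can be obtained by coordinate arguments of the type used in Lemma~\ref{lem1}: the function $x\mapsto\max_l x^l$ is well defined and $1$-Lipschitz on the star of $v$, so leaving the star costs length at least $1$, and the truncation $x^l\mapsto\min(x^l,\varepsilon)$ retracts the star $1$-Lipschitzly onto the corner. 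So the gap is fillable, but it must be filled; without it, the hyperconvexity of the ambient ball is being applied to distances that have not been identified with those of $Y_n$, and the contradiction does not yet follow.
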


But the converse does not hold in general as we show in Example~\ref{ex:not injective}.
In section~\ref{sec:collapsible} we then investigate under which assumptions a CAT(0) cube complex is	injective. We follow the strategy of Mai and Tang \cite{mai} and extend their notion of cubical collapsibility. We say that a cube complex $\mathcal{C}$ is \emph{regularly collapsible} if there is a sequence of subcomplexes $\mathcal{C}_k$ for $k=0, \ldots, n$ such that $\mathcal{C}_0$ is a point, $\mathcal{C}_n=\mathcal{C}$ and each $\mathcal{C}_{k+1}$ arises from $\mathcal{C}_k$ by gluing a (possibly infinite) collection of cube complexes of the form ${L}_i \times [0,1]$ where the ${L}_i$'s are cuboids in $\mathcal{C}_k$. This gives us a new family of injective metric spaces which are not proper in general.

\begin{thm}\label{thm:injective}
	Let $\mathcal{C}$ be a regularly collapsible cube complex. Then $(|\mathcal{C}|,d_\infty)$ is an injective metric space.
\end{thm}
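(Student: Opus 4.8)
The proof should go by establishing that $(|\mathcal{C}|,d_\infty)$ is hyperconvex; by the theorem of Aronszajn and Panitchpakdi this is equivalent to injectivity. I would prove by induction on $k$ that each $(|\mathcal{C}_k|,d_\infty)$ is hyperconvex, the base case $\mathcal{C}_0$ being a single point. Note first that a regularly collapsible complex is finite dimensional, since each of the finitely many gluing steps raises the dimension by at most one; hence Theorems~\ref{thm:geodesic} and~\ref{thm:CAT(0)} apply at every stage, and in particular, once hyperconvexity of $|\mathcal{C}_k|$ is known it is injective, so $(|\mathcal{C}_k|,d_2)$ is CAT(0) and its cuboids are genuine convex subcomplexes. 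One also records the routine fact that the inclusions $|\mathcal{C}_k|\hookrightarrow|\mathcal{C}_{k+1}|$ and $L_i\times[0,1]\hookrightarrow|\mathcal{C}_{k+1}|$ are isometric embeddings: a geodesic (which exists by Theorem~\ref{thm:geodesic}) that leaves $|\mathcal{C}_k|$ through a strip $L_i\times[0,1]$ and returns can be shortened, because the internal distance in $L_i\times[0,1]$ between two points of $L_i\times\{0\}$ equals their distance in the cuboid $L_i$.

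For the inductive step, $|\mathcal{C}_{k+1}|$ is obtained from the hyperconvex space $|\mathcal{C}_k|$ by gluing the pieces $L_i\times[0,1]$ along the cuboids $L_i\cong L_i\times\{0\}$. Each piece carries the $l_\infty$-metric and is therefore isometric to a box $\prod_j[0,a_j]$ in some $l_\infty^m$, hence hyperconvex; and a short computation shows that $L_i\times\{0\}$ is externally hyperconvex in $L_i\times[0,1]$ (for compatible balls centred at points $(\ell_j,t_j)$ one has $t_j\le r_j$, so hyperconvexity of the box $L_i$ already produces a suitable point). The plan is then to invoke a gluing theorem for hyperconvex spaces --- of the form: if a hyperconvex space $X$ and hyperconvex spaces $Y_i$ are glued along subsets that are externally hyperconvex both in $X$ and in the respective $Y_i$, the result is again hyperconvex --- to conclude that $|\mathcal{C}_{k+1}|$ is hyperconvex. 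After $n$ steps this yields hyperconvexity, and hence injectivity, of $(|\mathcal{C}|,d_\infty)$.

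The crux, and the step I expect to be the main obstacle, is supplying the remaining hypothesis of the gluing theorem: each cuboid $L_i$ must be externally hyperconvex in $(|\mathcal{C}_k|,d_\infty)$. I would isolate this as a lemma valid in any CAT(0) cube complex equipped with the $l_\infty$-metric and prove it by reducing, via the wall structure, to the model situation of a coordinate subspace in $(l_\infty^n,d_\infty)$, where one checks directly that the intersection of any compatible family of closed balls each meeting the subspace still meets the subspace. Should this prove awkward to establish in one stroke, the fallback is to strengthen the induction hypothesis so that it also asserts that every cuboid of $\mathcal{C}_k$ is externally hyperconvex in $|\mathcal{C}_k|$, and to verify that this property survives the gluing --- via a general lemma that external hyperconvexity of a subset is preserved when hyperconvex pieces are attached along other externally hyperconvex subsets --- and is inherited by the new cuboids of $\mathcal{C}_{k+1}$, which are sub-boxes $L''\times J$ of the strips and thus reduce to external hyperconvexity of $L''$ in $|\mathcal{C}_k|$ together with a product computation. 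A secondary point to watch is that a single step may attach infinitely many, and possibly overlapping, strips at once, so one needs the version of the gluing theorem handling a whole compatible family of externally hyperconvex subsets simultaneously rather than iterating the two-piece case; completeness of $(|\mathcal{C}|,d_\infty)$ from Theorem~\ref{thm:geodesic} is what keeps such limiting arguments under control.
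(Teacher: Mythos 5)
Your overall frame (hyperconvexity via Aronszajn--Panitchpakdi, induction along the regular decomposition) matches the paper, but your route through the inductive step is genuinely different: you want to quote a gluing theorem for hyperconvex spaces glued along externally hyperconvex subsets, whereas the paper never isolates external hyperconvexity at all. Instead it carries two combinatorial invariants through the decomposition: that closed neighbourhoods of \emph{generalized cuboids} are again generalized cuboids (Lemma~\ref{lem:balls}), and that generalized cuboids satisfy an infinite Helly property, ``property (P)'' (Lemma~\ref{lem:property (P)}). Since points are generalized cuboids, pairwise intersecting closed balls are pairwise intersecting generalized cuboids and hyperconvexity drops out immediately. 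That machinery substitutes for both of your missing ingredients at once, and it handles the simultaneous attachment of infinitely many strips (whose base cuboids may overlap inside $\mathcal{C}_k$) without any limiting argument or appeal to completeness.

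The genuine gap is that the two pillars of your argument are asserted rather than proved, and the second is where all the work of the theorem lives. First, the gluing theorem for hyperconvex spaces along externally hyperconvex subsets is a substantial result in its own right, not available in this paper, and the infinite simultaneous version you need does not follow formally from the two-piece case. Second, the ``crux'' lemma --- every cuboid of $(|\mathcal{C}_k|,d_\infty)$ is externally hyperconvex --- is precisely the hard analytic content; your proposed reduction ``via the wall structure to a coordinate subspace of $l_\infty^n$'' does not go through as stated, because $(|\mathcal{C}_k|,d_\infty)$ is not a subspace of $l_\infty^n$: a ball of radius $r$ spreads across many cubes, and describing its shape and its intersection with a cuboid is exactly what Lemmas~\ref{lem:projection} and~\ref{lem:balls} are for. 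A smaller but real error: a cuboid $L_i$ need not be a box $\prod_j[0,a_j]$ --- it is only a convex subcomplex (a tree, for instance), so the claimed isometry of $L_i\times[0,1]$ with a box in $l_\infty^m$ is false, and the hyperconvexity of the strip must instead come from the induction hypothesis. Your fallback of strengthening the induction hypothesis to carry external hyperconvexity of all cuboids is the right instinct --- it is essentially the paper's property (P) in disguise --- but as written the proposal leaves the decisive step unproved.
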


The aim is now to characterize all collapsible cube complexes. We do this by studying the hyperplanes of a CAT(0) cube complex $\mathcal{C}$. Each hyperplane separates $\mathcal{C}$ into two connected components, called \emph{halfspaces}. The\emph{ width} of $\mathcal{C}$ is given by the maximal number $n$ such that there is a chain $H_1 \subsetneq H_2 \subsetneq \ldots \subsetneq H_n$ of halfspaces. Furthermore the set of hyperplanes $\mathcal{H}$ is \emph{finitely colorable} if there is a function $f \colon \mathcal{H} \to F$ for some finite set $F$ such that $f(\mathfrak{h}) \neq f(\mathfrak{h}')$ whenever the hyperplanes $\mathfrak{h}$ and $\mathfrak{h}'$ intersect.

\begin{thm}\label{thm:collapsible}
	 A cube complex $\mathcal{C}$ is regular collapsible if and only if it is CAT(0), has finite width and its hyperplanes are finitely colorable.
\end{thm}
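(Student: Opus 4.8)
The theorem is an equivalence, so there are two directions.

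For the implication ``regularly collapsible $\Rightarrow$ CAT(0), finite width, finitely colorable'', fix a witnessing sequence $\mathcal{C}_0\subset\cdots\subset\mathcal{C}_n=\mathcal{C}$. That $(|\mathcal{C}|,d_2)$ is CAT(0) is immediate: by Theorem~\ref{thm:injective} the space $(|\mathcal{C}|,d_\infty)$ is injective, hence CAT(0) by Theorem~\ref{thm:CAT(0)}. The other two conditions I would read off from the steps. A prism $L_i\times[0,1]$ glued at step $k$ creates exactly one new hyperplane, namely $L_i\times\{1/2\}$, which is dual precisely to the fresh $[0,1]$-direction edges of that prism; since distinct prisms of a single step meet only along cubes of $\mathcal{C}_k$ — which carry no such edges — the hyperplanes born at a fixed step are pairwise non-crossing. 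Colouring every hyperplane by the step at which it is born therefore gives a proper colouring with at most $n$ colours. For finite width I would track the poset of halfspaces along the sequence: the halfspaces of $\mathcal{C}_{k+1}$ are the extensions of those of $\mathcal{C}_k$ together with, for each new hyperplane, the ``thin'' halfspace $L_i\times(1/2,1]$ and its complement, and since the thin halfspaces of one step are pairwise disjoint, any chain uses at most two of them; hence the height of the poset grows by a bounded amount at each step and $\mathcal{C}$ has finite width.

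For the converse, assume $\mathcal{C}$ is CAT(0), has width $\le w$, and carries a proper colouring of its hyperplanes by $c$ colours. The plan is to reverse a systematic \emph{peeling}. Call a hyperplane $\mathfrak{h}$ \emph{peelable} if its carrier is $N(\mathfrak{h})\cong\mathfrak{h}\times[-1,1]$ with $\mathfrak{h}$ a cuboid and one of its two halfspaces $H$ equal to the thin collar $\mathfrak{h}\times(0,1]$ — equivalently, $\mathfrak{h}$ is a cuboid and $H$ is a minimal halfspace; then collapsing $N(\mathfrak{h})$ onto the copy of $\mathfrak{h}$ on the $H^c$-side deletes $\mathfrak{h}$ and exactly undoes a prism attachment over that (cuboid) copy. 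Since $\mathcal{C}$ has finite width, minimal halfspaces exist as long as $\mathcal{C}$ has a hyperplane, so one can always try to peel. Moreover the minimal halfspaces used by distinct peelable hyperplanes of a fixed colour are pairwise incomparable (being minimal) and non-crossing (same colour), hence — a short case check — pairwise disjoint; this forces the ``pieces removed'' by those peels to be pairwise disjoint, so a whole colour class can be peeled simultaneously in one legal step. I would then proceed in rounds: in round $t$, for $j=1,\dots,c$ in turn, peel all currently peelable hyperplanes of colour $j$. The key estimate is that one full round strictly lowers the width: the hyperplane at the bottom of a maximal halfspace chain is peelable and — since peeling other hyperplanes neither removes it nor spoils minimality of its collar — survives until its colour-pass, when it is peeled; and since peeling only shrinks the halfspace poset, no chain of the original length survives the round. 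Hence after at most $w$ rounds $\mathcal{C}$ has no hyperplanes left, i.e.\ is a point, and reversing the steps gives the desired sequence.

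The point I expect to be the main obstacle is the cuboid requirement hidden in ``peelable'': a hyperplane of a CAT(0) cube complex is again a CAT(0) cube complex but need not be a cuboid, so a non-cuboid hyperplane cannot be peeled in a single step, and the bottom of a maximal chain might be of this kind. I would remove this obstruction by a secondary induction on $\dim\mathcal{C}$: each hyperplane $\mathfrak{h}$ is a CAT(0) cube complex of strictly smaller dimension, still of width $\le w$ and properly colourable by restriction, hence regularly collapsible by the inductive hypothesis; feeding its collapse sequence through the product with the transverse interval — and using that the product of a cuboid with a path of edges is again a cuboid — realises the ``shrinking'' of $\mathfrak{h}$ down to a cuboid as finitely many extra legal steps interleaved with the peeling of $\mathcal{C}$. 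Finite dimension bounds the depth of this secondary induction and finite width bounds the number of peeling rounds, so the total number of steps is finite, as required. Here CAT(0)-ness is what makes hyperplanes, carriers and the halfspace structure well behaved (via Gromov's link condition), finite width guarantees minimal halfspaces and bounds the number of rounds, and finite colorability is precisely what lets an entire colour class be peeled in one step.
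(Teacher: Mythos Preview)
Your forward direction matches the paper's: CAT(0) via Theorems~\ref{thm:injective} and~\ref{thm:CAT(0)}, colouring by birth-step (Lemma~\ref{lem:hyperplanes in collapsible complex} and Corollary~\ref{cor:colorable}), and the width bound by tracking that each step adds only extremal hyperplanes (Corollary~\ref{cor:finite width}).

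For the converse your global architecture --- peel extremal hyperplanes colour by colour, then induct on width --- is exactly the paper's (Theorem~\ref{thm:collapsible CAT(0) complexes} together with Corollary~\ref{cor:multicollaps}). The difference is that the obstacle you single out does not exist. You require the hyperplane (equivalently, the base of the prism to be detached) to be a cuboid and then propose a secondary induction on dimension to force this. The paper instead proves directly that the base is \emph{always} a cuboid: if $H$ is a minimal halfspace with complement $H'$ and gate map $p'\colon G\to H'$, then Lemma~\ref{lem:convex image} shows that $p'(H)$ is a convex subgraph of the median graph $G$, and convex subgraphs span cuboids (they meet every cube in a face). This is packaged as Lemma~\ref{lem:simple collaps}. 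So every extremal hyperplane is already ``peelable'' in your sense, the peel of a whole colour class of extremal hyperplanes is a single legal step (Corollary~\ref{cor:multicollaps}), and no induction on dimension is needed. Your proposed workaround would also be delicate to make precise, since the $L_i$'s in a collapse of $\mathfrak{h}$ are cuboids in $\mathfrak{h}$, not a priori in the ambient $\mathcal{C}_k$; but the point is moot once you have Lemma~\ref{lem:convex image}.
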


	As a consequence we get for all cube complexes $\mathcal{C}$ which are either locally finite or have finite width and finitely colorable hyperplanes that $(|\mathcal{C}|,d_\infty)$ is injective if and only if $(|\mathcal{C}|,d_2)$ is CAT(0).


\section{Preliminaries}



\subsection{Cube Complexes and Hyperplanes}


	A \emph{real $n$-cube} is the product $I^n = [0,1]^n$. A subspace of the form $F=\prod_{i=1}^n F_i$ with $F_i \in \{ \{0\}, \{1\}, [0,1] \}$ is called a \emph{face} of $I^n$.
	
	Let $X=\bigsqcup_{\lambda \in \Lambda} I_\lambda^{n_\lambda}$ be the disjoint union of cubes, $\sim$ an equivalence relation on $X$ and $p_\lambda \colon I_\lambda^{n_\lambda} \to X / \sim$ the projection maps. Assume that the following holds
	\begin{enumerate}[(i)]
	\item The map $p_\lambda \colon I_\lambda^{n_\lambda} \to X / \sim$ is injective for every $\lambda \in \Lambda$.
	\item If $p_\lambda (I_\lambda^{n_\lambda}) \cap p_\lambda (I_{\lambda'}^{n_{\lambda'}})\neq \emptyset$ then there are faces $F_\lambda \subset I_{\lambda}^{n_{\lambda}}$, $F_{\lambda'} \subset I_{\lambda'}^{n_{\lambda'}}$ and an isometry $h_{\lambda,\lambda'} \colon F_{\lambda} \to F_{\lambda'}$ such that for all $x \in I_\lambda^{n_\lambda},x' \in I_\lambda^{n_\lambda'} $ we have $p_\lambda(x)=p_{\lambda'}(x')$ if and only if $h_{\lambda,\lambda'}(x)=x'$.
	\end{enumerate}
	A subset $C \subset X/\sim$ we call a \emph{cube} if it is the image $p_\lambda (F)$ of some face $F \subset I_\lambda^{n_\lambda}$. The collection $\mathcal{C}= \{p_\lambda (F) : F \text{ face of } I_\lambda^{n_\lambda}, \lambda \in \Lambda \}$ of all cubes in $X/\sim$ is then a \emph{cube complex} and $|\mathcal{C}| = X/\sim$ its geometric realization. 
\\

Beside the standard euclidean metric $d_2$ we can also choose other metrics on the cubes $I^n$. We equip each cube $C$ with a length metric $d^C$ respecting the topology of $I^n \subset \mathbb{R}^n$ such that the maps $h_{\lambda,\lambda'}$ still are isometries, for instance 
\begin{align*}
	d_p(x,y) &= \|x-y\|_p=\sqrt[p]{\sum_{i=1}^n |x_i-y_i|^p}, \text{\quad for $p \geq 1$, or } \\
	d_\infty (x,y) &= \|x-y\|_\infty = \max \{|x_1-y_1|, \ldots , |x_n-y_n| \}.
\end{align*}
An \emph{($m$-)string} $\Sigma$ between two points $x,y \in |\mathcal{C}|$ is a tuple $(x_0, \ldots, x_m)$ of points in $\mathcal{C}$ such that $x=x_0, y=x_m$ and for any two consecutive points $x_i,x_{i+1}$ there is some cube $C_i \in \mathcal{C}$ containing $x_i,x_{i+1}$. For each $m$-string $\Sigma=(x_0, \ldots, x_m)$ we then get its \emph{length}
	\begin{equation}
		l(\Sigma)=\sum_{i=0}^{m-1} d^{C_i}(x_i,x_{i+1}).
	\end{equation}
	Using the length of strings we define the distance between any two points $x,y \in \mathcal{C}$ by taking the infimum over the length of all strings joining them.
	\begin{equation}
		d(x,y)=\inf \{ l(\Sigma) : \Sigma \text{ is a string from $x$ to $y$} \}.
	\end{equation}
If all metrics on the cubes are induced by the same $l_p$-norm we emphasize this by writing $d_p$. In the first case this is a pseudo-metric on the cube complex $\mathcal{C}$. But the proof of Theorem~I.7.13 in \cite{bridson} also works in the situation with any choice of length metrics on the cubes.

\begin{thm}\label{thm:complete length space}
	If $\mathcal{C}$ is a cube complex with length metrics on the cubes such that there are only finitely many isometry types, then $(|\mathcal{C}|,d)$ is a complete length space.
\end{thm}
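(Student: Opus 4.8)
The statement is precisely the cube‑complex analogue of Bridson–Haefliger's Theorem~I.7.13 for $M_\kappa$‑polyhedral complexes with finitely many shapes, and the plan is to run that proof, checking that it uses nothing beyond (a) the finiteness of the collection of isometry types of cubes and (b) the fact that each cube, equipped with its length metric $d^C$, is a compact length space — hence geodesic by Hopf--Rinow — whose restriction to faces agrees with that of the neighbouring cubes via the maps $h_{\lambda,\lambda'}$. One may assume $|\mathcal{C}|$ is connected (otherwise argue on each connected component, or allow $d$ to take the value $\infty$). Two things have to be established: that $d$ is a genuine metric inducing the given topology on $|\mathcal{C}|$, and that $(|\mathcal{C}|,d)$ is a complete length space.

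For the metric/topology part the crux is a uniform positive lower bound. Since there are only finitely many isometry types, there is a constant $\varepsilon=\varepsilon(\mathcal{C})>0$ such that for every $x\in|\mathcal{C}|$ any string from $x$ that leaves the union $\mathrm{St}(x)$ of the closed cubes containing $x$ has length at least $\varepsilon$: inside each model cube the $d^C$‑distance from a point of a face to the complement of a fixed small closed neighbourhood of that face is positive, and one takes the minimum over the finitely many shapes. Consequently, whenever $d(x,y)<\varepsilon$ every near‑optimal string joining $x$ to $y$ stays inside $\mathrm{St}(x)$, so $d(x,y)$ coincides with the intrinsic length distance $d_{\mathrm{St}(x)}(x,y)$ of the subcomplex $\mathrm{St}(x)$; in particular $d(x,y)>0$ for $x\neq y$, and $d$ induces the original topology on each ball $B_d(x,\varepsilon)$, hence on all of $|\mathcal{C}|$.

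For the length‑space property, given any string $\Sigma=(x_0,\dots,x_m)$ with carrying cubes $C_i$, choose a $d^{C_i}$‑geodesic $\gamma_i$ from $x_i$ to $x_{i+1}$ inside $C_i$ and concatenate them to a path $\gamma$ from $x$ to $y$ in $|\mathcal{C}|$; since the restriction of $d$ to $C_i$ is dominated by $d^{C_i}$, the $d$‑length of $\gamma_i$ is at most $d^{C_i}(x_i,x_{i+1})$, so the $d$‑length of $\gamma$ is at most $l(\Sigma)$. Taking the infimum over all strings shows that the length metric induced by $d$ is $\le d$, while the reverse inequality is immediate from the triangle inequality, so $d$ is a length metric. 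Completeness then follows in the usual way: a $d$‑Cauchy sequence has all but finitely many terms inside some ball $B_d(x_N,\varepsilon/2)$, on which $d$ equals the intrinsic metric of the single star $\mathrm{St}(x_N)$, and that star is complete for its intrinsic metric — here the point, exactly as in \cite{bridson}, is that within one star the infimum defining the intrinsic distance is realised by strings of bounded combinatorial form — so the sequence converges. I expect the only real work to be the extraction of the uniform constant $\varepsilon$ and the resulting ``locality'' statement reducing $d$ on small balls to the intrinsic metric of a single star; granting that, the metric, topology and completeness assertions are routine, and the length‑space part uses only that compact length‑metric cubes (in particular $l_p$‑cubes) are geodesic.
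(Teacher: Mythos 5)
Your overall strategy is exactly the paper's: the paper disposes of this theorem by observing that the proof of Theorem~I.7.13 in \cite{bridson} goes through for arbitrary length metrics on the cubes, and your sketch is an unpacking of that proof. The problem is that your unpacking contains a false claim at the crucial point. The uniform constant $\varepsilon=\varepsilon(\mathcal{C})>0$ such that \emph{every} string starting at \emph{any} $x$ and leaving $\operatorname{St}(x)$ has length at least $\varepsilon$ does not exist. Take $x=(\tfrac12,\delta)$ in the interior of a maximal square $C$ sharing its bottom edge with another square $C'$: then $\operatorname{St}(x)=C$, and the $2$-string $\bigl(x,(\tfrac12,0),y\bigr)$ with $y\in C'$ near the shared edge leaves $\operatorname{St}(x)$ with length tending to $0$ as $\delta\to 0$. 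What finiteness of the isometry types actually gives is the point-dependent quantity $\varepsilon(x)>0$ of \cite{bridson} (the infimum, over the model cubes containing $x$, of the distance from $x$ to the union of the faces not containing $x$); this is positive for each fixed $x$ but has infimum $0$ over $x$. Your justification (distance from a point of a face to the complement of a neighbourhood of that face) bounds a different quantity and does not rescue the uniform statement.

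The point-dependent $\varepsilon(x)$ suffices for positivity of $d$, for identifying $d$ with the star metric on small balls, and for the length-space property (that part of your argument is fine and uses no uniformity). But your completeness argument leans essentially on the uniform constant: placing the tail of a Cauchy sequence inside $B_d(x_N,\varepsilon/2)$ for ``some $N$'' becomes circular once $\varepsilon$ depends on $x_N$, because $\varepsilon(x_N)$ may shrink as fast as the Cauchy tails do — this is precisely the mechanism by which completeness fails in Example~\ref{ex:not injective}, where the shapes are not finite in number. The delicate step of Theorem~I.7.13 is exactly here: one first passes to a subsequence whose supports all have the same model shape and exploits compactness of that model cube to control $\varepsilon$ along the sequence before trapping the tail in a single star. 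Relatedly, your assertion that a star is complete for its intrinsic metric is not automatic, since no local finiteness is assumed and a star may contain infinitely many cubes; this too is absorbed by the model-cube argument. So the route is the right one, but the completeness step as written has a genuine gap.
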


For euclidean cubes Bridson's Theorem \cite{bri} applies and tells us that finite dimensional cube complexes with metric $d_2$ are geodesic. The more general case we treat in section~\ref{sec:geodesic}. For euclidean cube complexes Gromov \cite{gromov} gives a combinatorial criterion for the CAT(0) property. We state here a version used later and stated in \cite{chepoi}.

\begin{thm}
	A cube complex $\mathcal{C}$ is CAT(0) if and only if it is simply-connected and the following holds: Given three $(n+2)$-cubes intersecting in an $n$-cube and pairwise sharing an $(n+1)$-face then there is some $(n+3)$-cube containing all of them.
\end{thm}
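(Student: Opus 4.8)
The plan is to reduce everything to Gromov's link condition in its standard simplicial form: a (finite-dimensional) cube complex with the metric $d_2$ is CAT(0) if and only if it is simply connected and the link $\mathrm{Lk}(v)$ of every vertex $v$ is a \emph{flag} simplicial complex, i.e. every finite set of pairwise adjacent vertices spans a simplex. Since simple connectedness appears verbatim in the desired statement, it suffices to prove the purely combinatorial equivalence that $\mathcal{C}$ satisfies the stated three-cube condition if and only if all of its vertex links are flag. The one tool I will use throughout is the standard dictionary at a vertex $v$: the $k$-cubes of $\mathcal{C}$ containing $v$ correspond bijectively, and in an inclusion-preserving way, to the $(k-1)$-simplices of $\mathrm{Lk}(v)$, and intersections of such cubes correspond to intersections of the associated simplices. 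I will also use the elementary fact that two distinct cubes of the same dimension $d+1$ that both contain a common $d$-face $F$ meet exactly in $F$, because in a cube complex $C \cap C'$ is a common face of each, it contains $F$, and the only faces of a $(d+1)$-cube containing a given $d$-face are that $d$-face and the whole cube.

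First I would handle the easier implication. Given three $(n+2)$-cubes $C_1, C_2, C_3$ meeting in an $n$-cube $Q$ and pairwise sharing an $(n+1)$-face, pick a vertex $v$ of $Q$ and pass to $L = \mathrm{Lk}(v)$: there $Q$ becomes an $(n-1)$-simplex $\sigma$, each $C_i$ an $(n+1)$-simplex $\tau_i \supseteq \sigma$ with $\tau_i \cap \tau_j = \sigma * \{z_{ij}\}$ for a single vertex $z_{ij} \notin \sigma$, and $\tau_1 \cap \tau_2 \cap \tau_3 = \sigma$. The last equality forces $z_{ij} \neq z_{ik}$ for $j \neq k$ (otherwise that common vertex would lie in the triple intersection), hence $\tau_i = \sigma * \{z_{ij}, z_{ik}\}$; relabelling $a = z_{23}$, $b = z_{13}$, $c = z_{12}$ gives $\tau_1 = \sigma * \{b,c\}$, $\tau_2 = \sigma * \{a,c\}$, $\tau_3 = \sigma * \{a,b\}$. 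Now every pair from $\sigma \cup \{a,b,c\}$ is joined by an edge of $L$, so flagness yields the simplex $\sigma * \{a,b,c\}$ of dimension $n+2$, and the corresponding $(n+3)$-cube contains $v$ and each $C_i$ (as each $\tau_i$ is one of its faces).

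For the converse I would argue by contradiction: assume the three-cube condition and suppose some $L = \mathrm{Lk}(v)$ is not flag, and choose pairwise adjacent vertices $w_0, \dots, w_m$ spanning $\partial \Delta^m$ but not $\Delta^m$ with $m$ minimal; minimality forces $m \geq 2$. Set $\sigma = \{w_3, \dots, w_m\}$ (so $\sigma = \emptyset$ when $m = 2$), a simplex of $L$, corresponding to an $n$-cube $Q \ni v$ with $n = m - 2 \geq 0$. Since all proper subsets of $\{w_0, \dots, w_m\}$ are simplices of $L$, the three simplices $\sigma * \{w_i, w_j\}$ with $0 \leq i < j \leq 2$ correspond to three $(n+2)$-cubes; these all contain $Q$, are pairwise distinct, and by the elementary fact above meet pairwise exactly in the $(n+1)$-cubes coming from $\sigma * \{w_0\}$, $\sigma * \{w_1\}$, $\sigma * \{w_2\}$, whose common intersection corresponds to $\sigma$, i.e. equals $Q$. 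The three-cube condition then supplies an $(n+3)$-cube $D$ containing all three; since $D \ni v$, it corresponds to an $m$-simplex of $L$ containing $\sigma \cup \{w_0, w_1, w_2\} = \{w_0, \dots, w_m\}$, so $\Delta^m \in L$, a contradiction. Hence every vertex link is flag, and Gromov's criterion finishes the proof.

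The main obstacle is not any single computation but making this cube–simplex dictionary genuinely watertight: one must verify that "pairwise sharing an $(n+1)$-face" together with "meeting in an $n$-cube" translates to precisely the simplex intersection pattern used above, and — the point most easily overlooked — that the degenerate ends of the induction are really covered, namely the case $n = 0$ (three squares through a vertex, pairwise meeting in an edge), which is exactly the base case $m = 2$ and is what rules out empty triangles in $\mathrm{Lk}(v)$, and the empty simplex $\sigma$. Everything else is routine bookkeeping.
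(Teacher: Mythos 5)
This theorem is stated in the paper as a citation (to Gromov and to the version in Chepoi's paper) and is given no proof there, so there is nothing internal to compare against; judged on its own, your argument is correct and is the standard derivation. You reduce the cube-level condition to the flagness of all vertex links and then invoke Gromov's lemma, and the combinatorial dictionary you use (bijection between $k$-cubes at $v$ and $(k-1)$-simplices of $\mathrm{Lk}(v)$, inclusion- and intersection-preserving; two distinct $(d+1)$-cubes sharing a $d$-face meet exactly in that face) is legitimate under the paper's definition of a cube complex, since conditions (i) and (ii) there force cubes to embed and to intersect in single faces, which is exactly what makes the links honest simplicial complexes and the dictionary injective. Both directions check out, including the degenerate case $m=2$, $\sigma=\emptyset$ (three squares at a vertex pairwise sharing an edge), and your verification that the triple intersection of the three cubes built from $\sigma*\{w_i,w_j\}$ is exactly $Q$ is the one step that genuinely needs the ``elementary fact,'' which you supply. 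The only caveat worth recording is that the link-condition form of Gromov's lemma you quote is classically stated for finite-dimensional complexes (you flag this parenthetically), whereas the paper's statement carries no dimension hypothesis; since the equivalence you actually prove --- three-cube condition $\Leftrightarrow$ all links flag --- is dimension-free, this only affects which version of Gromov's lemma one cites at the end, not the substance of your argument.
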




	
In the study of CAT(0) cube complexes hyperplanes play a crucial role. A \emph{midcube} of an $n$-dimensional cube $I^n= [0,1]^n$ is a subset given by $M_k= \{x=(x_1, \ldots, x_n) \in I^n : x_k=\tfrac{1}{2} \}$.

	Let $\mathcal{C}$ be a CAT(0) cube complex and $E(\mathcal{C})$ the set of all edges. We define an equivalence relation on $E(\mathcal{C})$ generated by $e \Box f$ if $e$ and $f$ are opposite edges of some square. Given an equivalence class $[e]$ the \emph{hyperplane} dual to $[e]$ is the union of all midcubes in $\mathcal{C}$ crossing some edge in $[e]$.

We collect here some basic facts about hyperplanes.

\begin{thm}\label{thm:hyperplanes}\emph{(\cite{sageev}, Theorems 4.10 - 4.14)}
	For a CAT(0) cube complex $\mathcal{C}$ the following holds.
	\begin{enumerate}[(i)]
	\item Every hyperplane is embedded.
	\item Every hyperplane splits $|\mathcal{C}|$ into precisely two convex connected components.
	\item Every hyperplane is a CAT(0) cube complex.
	\item Every collection of pairwise intersecting hyperplanes intersects.
	\end{enumerate}
\end{thm}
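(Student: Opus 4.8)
The plan is to push everything down to the combinatorial structure of the $1$-skeleton $\mathcal{C}^{(1)}$ with its edge-path metric and to isolate a single key lemma, from which all four assertions follow by bookkeeping. Say an edge-path $\gamma=(v_0,e_1,v_1,\dots,e_m,v_m)$ \emph{crosses} the hyperplane $\mathfrak{h}$ dual to $[e]$ whenever some $e_i\in[e]$. The lemma I would prove first is: \emph{no combinatorial geodesic in $\mathcal{C}^{(1)}$ has two distinct edges lying in the same hyperplane class.} The natural route is via minimal-area disk diagrams, which exist because $\mathcal{C}$ is simply-connected. If edges $e_i,e_j$ of $\gamma$ were both dual to $\mathfrak{h}$, I would build a minimal-area disk diagram $D\to\mathcal{C}$ bounded by the subpath of $\gamma$ between $e_i$ and $e_j$ together with a second geodesic; the track of $\mathfrak{h}$ inside $D$ meets the $\gamma$-part of the boundary in two points, and an innermost-track / minimal-area analysis produces a boundary square of $D$ whose opposite corner can be completed by the link condition, letting one push $\gamma$ across it and shorten it, contradicting geodesy. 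This is the one genuinely technical step; everything below is built on top of it.

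From the lemma I would derive (ii) first. Declare vertices $u,v$ equivalent when some---hence (by simple-connectivity, since homotoping across a square changes the crossing number of each hyperplane by $0$ or $2$) every---edge-path between them crosses $\mathfrak{h}$ an even number of times; along a geodesic this count is $0$ or $1$ by the lemma, so parity detects exactly whether a geodesic crosses $\mathfrak{h}$, the relation is well defined, and it has exactly two classes (nonempty because $\mathfrak{h}$ has at least one dual edge, whose endpoints lie in different classes). These two classes are the vertex sets of the two components of $|\mathcal{C}|\setminus\mathfrak{h}$, and each is convex, since a geodesic joining two equivalent vertices never crosses $\mathfrak{h}$ and hence stays on one side. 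Now every edge dual to $\mathfrak{h}$ joins the two sides; this orients all of $[e]$ coherently and identifies the carrier $N(\mathfrak{h})$---the union of all cubes containing a midcube of $\mathfrak{h}$---with a product $\mathfrak{h}\times[0,1]$, the two factors $\mathfrak{h}\times\{0\}$, $\mathfrak{h}\times\{1\}$ being the two sides. Embeddedness (i) is then immediate: a self-intersection of $\mathfrak{h}$ would force two distinct dual edges onto a single geodesic, which the lemma forbids.

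For (iii) I would first establish that the carrier $N(\mathfrak{h})\cong\mathfrak{h}\times[0,1]$ is a convex subcomplex: a geodesic leaving $N(\mathfrak{h})$ would have to cross, and later re-cross, some hyperplane disjoint from $\mathfrak{h}$, again contradicting the lemma. A convex subset of a CAT(0) space is CAT(0), so $N(\mathfrak{h})$ is CAT(0) (in particular contractible); the mid-level $\mathfrak{h}\cong\mathfrak{h}\times\{\tfrac12\}$ is convex in $N(\mathfrak{h})$ and is visibly a cube complex whose cells are the midcubes of $\mathfrak{h}$, hence is itself a CAT(0) cube complex. Alternatively one can verify the combinatorial CAT(0) criterion for $\mathfrak{h}$ directly, reading its link condition off the ambient one through the product carrier, but the convexity route is shorter.

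Finally, (iv) is the Helly property, which I would prove by induction on the number of pairwise-intersecting hyperplanes. Two hyperplanes cross exactly when a single square carries both their midcubes. For the inductive step, suppose $\mathfrak{h}_1,\dots,\mathfrak{h}_{k-1}$ already meet in a common $(k-1)$-cube $C$ and each crosses $\mathfrak{h}_k$; the crossings $\mathfrak{h}_i\cap\mathfrak{h}_k$ together with $C$ give the configuration of cubes meeting in a common face to which the higher cube condition from the combinatorial CAT(0) criterion stated above applies, producing a $k$-cube carrying all of $\mathfrak{h}_1,\dots,\mathfrak{h}_k$. The only delicacy is organizing the bookkeeping so that the cube condition is applicable at each stage; all the geometric content is already contained in that link condition, so I expect the main obstacle of the whole theorem to remain the disk-diagram lemma of the first paragraph rather than anything in (i)--(iv).
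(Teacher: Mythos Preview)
The paper does not actually prove this theorem; it is quoted from Sageev with a citation (Theorems~4.10--4.14 of \cite{sageev}) and used as background, so there is no ``paper's own proof'' to compare your proposal against.

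That said, your outline is the standard route and is essentially how Sageev and later authors (Haglund, Wise) organise the argument: isolate the lemma that a combinatorial geodesic crosses each hyperplane at most once, prove it by a minimal disk-diagram / hexagon-move argument using the link condition, and deduce (i)--(iii) from the resulting product structure of the carrier. Those parts of your sketch are sound. The one place where your write-up is genuinely loose is the inductive step in (iv): from ``$\mathfrak{h}_1,\dots,\mathfrak{h}_{k-1}$ meet in a $(k-1)$-cube $C$ and each $\mathfrak{h}_i$ crosses $\mathfrak{h}_k$'' you cannot directly invoke the three-cube condition, because you have not arranged the witnessing squares for $\mathfrak{h}_i\cap\mathfrak{h}_k$ to sit at a common vertex adjacent to $C$. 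The usual fix is either to prove the Helly property for convex subcomplexes (or halfspaces) first---which drops out of the median structure of the $1$-skeleton---and then intersect carriers, or to gate $C$ into $N(\mathfrak{h}_k)$ and argue there. With that adjustment the strategy goes through.
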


Recall that a subset $A$ of a metric space $(X,d)$ is \emph{convex}, if for all points $x,y \in A$ the interval $I(x,y)=\{z : d(x,z)+d(z,y)=d(x,y)\}$ is contained in $A$.

\begin{definition}
	We call the set of hyperplanes $\mathcal{H}$ \emph{$n$-colorable} if there is a function $\Phi \colon \mathcal{H} \to \{ 1, \ldots n \}$ such that $\Phi(\mathfrak{h}) \neq \Phi (\mathfrak{h}')$ whenever the two hyperplanes $\mathfrak{h},\mathfrak{h}'$ intersect.
	$\mathcal{H}$ is \emph{finitely colorable} if it is $n$-colorable for some $n \in \mathbb{N}$.
\end{definition}

\begin{rmk}
	The task of finding a finite coloring of the hyperplanes can be translated to a graph coloring problem by defining the crossing graph $\Gamma(\mathcal{C})$ of $\mathcal{C}$ with vertices $\mathcal{H}$ where two vertices are connected if and only if the corresponding hyperplanes intersect. Then the set $\mathcal{H}$ is finitely colorable if and only if the chromatic number $\chi(\Gamma(\mathcal{C}))$ of $\Gamma(\mathcal{C})$ is finite. The chromatic number of the crossing graph for locally finite CAT(0) cube complexes has been studied by Chepoi and Hagen in \cite{chepoi-hagen}.
\end{rmk}

Since all midcubes in some given cube $C$ intersect and each of them belongs to a different hyperplane, finite colorability of the hyperplanes induces that the cube complex is finite dimensional. On the other hand, there are finite dimensional CAT(0) cube complexes with bounded diameter whose hyperplanes are not finitely colorable as  we will see later in Example~\ref{ex:not colorable}.


\subsection{Median Graphs}


	Let $G=(V,E)$ be a graph and $d$ the path metric on $G$ with edge length $1$.
	\begin{enumerate}[(i)]
	\item $G$ fulfills the \emph{triangle condition} if for all $u,v,w \in V$ with $1=d(v,w) < d(u,v)=d(u,w)$ there is some $x \in V$ which is a common neighbor of $v$ and $w$ such that $d(u,x)=d(u,v)-1$.
	\item $G$ fulfills the \emph{quadrangle condition} if for all $u,v,w,z \in V$ with $d(v,z)=d(w,z)=1$ and $1\leq d(u,v)=d(u,w)=d(u,z)-1$ there is some $x \in V$ which is a common neighbor of $v$ and $w$ such that $d(u,x)=d(u,v)-1$.
	\end{enumerate}

	The graph $G$ is called \emph{median} if $(V,d)$ is a median metric space, i.e. for any three points $x,y,z \in V$ there is a unique point $\mu(x,y,z) \in I(x,y)\cap I(y,z)\cap I(z,x)$. For further details and results on median graphs see \cite{chepoi} and the references given therein.

\begin{rmk}
	Median graphs are triangle-free and fulfill both the triangle and the quadrangle condition. Since such graphs are triangle-free, the triangle condition implies that there are no two neighbors having the same distance to a third vertex.
\end{rmk}

\newpage
\begin{example}
	The following graph, denoted by $K_{2,3}$, is triangle-free, fulfills both the triangle and the quadrangle condition but is not median.
\begin{center}
\begin{tikzpicture}
  [scale=.3,auto=left,every node/.style={circle,fill=black!50, inner sep=0pt, minimum width=4pt}]
  \node (n1) at (1,5)  {};
  \node (n2) at (5,2)  {};
  \node (n3) at (5,5)  {};
  \node (n4) at (5,8)  {};
  \node (n5) at (9,5)  {};

  \foreach \from/\to in {n1/n2,n1/n3,n1/n4,n2/n5,n3/n5,n4/n5}
    \draw (\from) -- (\to);
\end{tikzpicture}
\end{center}
\end{example}

\begin{prop} \emph{(\cite{chepoi}, Lemma 4.1)}
	A graph $G$ is median if and only if it is triangle-free, fulfills the quadrangle condition and does not contain a copy of $K_{2,3}$.
\end{prop}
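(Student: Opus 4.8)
The plan is to prove the two implications separately, the real content lying almost entirely in the ``if'' direction. For the ``only if'' direction, suppose $G$ is median. Triangle-freeness and the quadrangle condition hold for every median graph by the Remark above, so it only remains to forbid a copy of $K_{2,3}$. If $G$ contained a subgraph isomorphic to $K_{2,3}$, with degree-$2$ vertices $a,b,c$ and degree-$3$ vertices $p,q$, then triangle-freeness would force $a,b,c$ pairwise non-adjacent and $p,q$ non-adjacent in $G$, so that $d(a,b)=d(b,c)=d(c,a)=2$ and $d(p,q)=2$; hence $p$ and $q$ are two distinct points of $I(a,b)\cap I(b,c)\cap I(c,a)$, contradicting uniqueness of the median of $\{a,b,c\}$.

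For the ``if'' direction assume $G$ is connected, triangle-free, satisfies the quadrangle condition and contains no $K_{2,3}$. I would also invoke the triangle condition (it holds for every median graph, so nothing is lost, and with it $G$ is forced to be bipartite: in a shortest, hence isometric, odd cycle of length $\ge 5$ the vertex opposite an edge is equidistant from its two endpoints, and the triangle condition would then supply a triangle). So from now on $G$ is a connected bipartite graph satisfying the triangle and quadrangle conditions and with no $K_{2,3}$. I claim it is enough to prove two statements:
\begin{enumerate}[(a)]
\item every triple $x,y,z$ has at least one median;
\item every interval $I(x,y)$ in $G$ is convex.
\end{enumerate}
Granting these, the median set $M=I(x,y)\cap I(y,z)\cap I(z,x)$ is an intersection of convex sets, hence convex and therefore connected, and it is non-empty by (a); if it contained two vertices it would contain two adjacent ones, but a short computation with the defining equations of $M$ shows that all medians of $x,y,z$ are at the same distance from each of $x,y,z$, which is impossible for two adjacent vertices of a bipartite graph. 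Hence $|M|=1$ and $G$ is median.

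It remains to indicate how to prove (a) and (b). Statement (a) is the classical fact that bipartite graphs satisfying the quadrangle condition have medians: take a vertex $m$ minimising $f(m)=d(x,m)+d(y,m)+d(z,m)$ and, among those, minimising $d(x,m)$; if $m\notin I(x,y)$ one passes to neighbours of $m$ lying one step closer to $x$ and to $y$, uses bipartiteness to see that they must then be strictly farther from $z$, and feeds these two neighbours together with $z$ into the quadrangle condition, obtaining either a vertex with smaller $f$ or a contradiction to the tie-break. Statement (b) is where $K_{2,3}$-freeness is indispensable — note that $K_{2,3}$ itself is bipartite and satisfies both conditions but has non-convex intervals, so a hypothesis of this kind is unavoidable. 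Here I would take a minimal failure of convexity, that is, vertices $u,v\in I(x,y)$ and $w\in I(u,v)\setminus I(x,y)$ with $d(u,v)$ as small as possible, and combine this minimality with repeated applications of the quadrangle condition along geodesics running from $u$ and $v$ towards $x$ and $y$ in order to exhibit five vertices that span a copy of $K_{2,3}$, the forbidden configuration. Extracting this explicit $K_{2,3}$ is the step I expect to be the main obstacle; statement (a) and the deduction of uniqueness above are short once (b) is in hand.
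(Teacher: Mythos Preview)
The paper does not prove this proposition at all; it is simply quoted as Lemma~4.1 of \cite{chepoi} and then used. So there is no argument in the paper to compare your proposal against.

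On its own merits, your outline for the ``only if'' direction is fine, but the ``if'' direction has a real gap. You say you would ``also invoke the triangle condition'' because it holds for median graphs; that justification is circular, since you are trying to \emph{prove} $G$ is median. More to the point, the triangle condition (equivalently, bipartiteness, in the triangle-free setting) is not a consequence of the three stated hypotheses: the $5$-cycle $C_5$ is triangle-free, contains no $K_{2,3}$, and satisfies the quadrangle condition vacuously (for any $z$ with neighbours $v,w$, no vertex $u$ has $d(u,v)=d(u,w)=d(u,z)-1$), yet $C_5$ is not median. So you cannot simply add the triangle condition and proceed; either the statement as recorded here tacitly assumes bipartiteness (which is how the result is often stated in the literature), or one must first \emph{derive} bipartiteness from the given hypotheses, and your $C_5$ check shows the latter is impossible as written. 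Your steps (a) and (b) and the uniqueness argument are reasonable once bipartiteness is in hand, but the passage to bipartiteness is the actual crux and your proposal does not address it.
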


	Let $G=(V,E)$ be a graph. A \emph{subgraph} $A$ of $G$ is a graph with vertex set $V(A) \subset V$ and containing all edges $xy \in E$ with $x,y \in V(A)$.
	Given two subgraphs $A,B$ we define the union $A\cup B$ and the difference $A \setminus B$ as the subgraphs with vertex set $V(A\cup B) = V(A) \cup V(B)$ and $V(A\setminus B) = V(A) \setminus V(B)$ respectively.
	
	A subset $A$ of a metric space $(X,d)$ is \emph{gated} if for all $x \in X$ there is some $x_0 \in A$ such that for all $a\in A$ we have $d(x,a)=d(x,x_0)+d(x_0,a)$. Clearly if such an $x_0$ exists it is unique and we then call $x_0$ the \emph{gate} of $x$ in $A$.

\begin{lem}\label{lem:convex subgraphs}\cite{chep}
	Let $H$ be a connected subgraph of the median graph $G$. Then $H$ is convex if and only if it is $2$-convex, i.e. for every pair $x,y \in H$ with $d(x,y)=2$ we have $I(x,y) \subset H$. Moreover every convex subset of $G$ is gated.
\end{lem}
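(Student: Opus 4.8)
The plan is to split the statement into three parts, all carried out inside the median graph $G$, writing $d$ for its path metric: (a) a connected $2$-convex subgraph $H$ is isometrically embedded, i.e.\ $d_H=d$ on $H$; (b) such an $H$ is in fact convex; and (c) every convex subset of $G$ is gated. The equivalence then follows immediately — a convex subgraph is trivially $2$-convex, and the converse for a connected $H$ is exactly (a) together with (b) — while (c) gives the ``moreover''. Part (a) is really the engine; parts (b) and (c) are short consequences.

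For part (a) I would fix $x,y\in H$, pick among all paths in $H$ from $x$ to $y$ one $P=(p_0,\dots,p_m)$ of minimal length, and among those one minimizing $\Phi(P)=\sum_{i=0}^m d(y,p_i)$, and then show $m=d(x,y)$. If not, the function $f(i)=d(y,p_i)$, which starts at $f(0)=d(x,y)<m$, ends at $f(m)=0$, and changes by $\pm1$ at each step, must increase somewhere; taking $j$ to be the \emph{last} index where it does, one checks that $f$ strictly decreases on $\{j,\dots,m\}$, so that $p_j$ is adjacent to both $p_{j-1}$ and $p_{j+1}$ with $f(j-1)=f(j+1)=f(j)-1$. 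If $p_{j-1}=p_{j+1}$ (which in particular occurs if $f(j)=1$) one can delete $p_j,p_{j+1}$ and shorten $P$, contradicting minimality; otherwise $d(p_{j-1},p_{j+1})=2$ because $G$ is triangle-free. Now feed $(u,v,w,z)=(y,p_{j-1},p_{j+1},p_j)$ into the quadrangle condition: it produces a common neighbour $q\neq p_j$ of $p_{j-1}$ and $p_{j+1}$ with $d(y,q)=f(j)-2$. Since $G$ contains no $K_{2,3}$, the vertices $p_{j-1},p_{j+1}$ have at most two common neighbours, so $I(p_{j-1},p_{j+1})=\{p_{j-1},p_j,p_{j+1},q\}$; by $2$-convexity this interval lies in $H$, hence $q\in H$. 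Replacing $p_j$ by $q$ in $P$ yields a path in $H$ from $x$ to $y$ that is either strictly shorter or of strictly smaller $\Phi$ — contradiction. Hence $m=d(x,y)$, so $H$ is isometrically embedded and moreover contains a $G$-geodesic between any two of its points.

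For part (b) I would induct on $k=d(x,y)$ for $x,y\in H$; the cases $k\le 2$ are immediate, $k=2$ being exactly $2$-convexity. For $k\ge3$ and $p\in I(x,y)\setminus\{x,y\}$, choose a geodesic from $x$ to $y$ through $p$ and let $x'$ be its second vertex, so $d(x',y)=k-1$ and $p\in I(x',y)$; it suffices to prove $x'\in H$, since then the inductive hypothesis applied to $x',y$ gives $p\in I(x',y)\subseteq H$. By part (a), $H$ contains a geodesic from $x$ to $y$; let $q_1\in H\cap I(x,y)$ be its first vertex after $x$. If $x'=q_1$ we are done. Otherwise apply the quadrangle condition to $(u,v,w,z)=(y,x',q_1,x)$ to get a common neighbour $q$ of $x'$ and $q_1$ with $d(y,q)=k-2$; then $q\in I(q_1,y)\subseteq H$ by the inductive hypothesis (as $d(q_1,y)=k-1$), while $x'\in I(q,q_1)$ with $d(q,q_1)=2$, so $2$-convexity forces $x'\in H$. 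For part (c): given a convex subset $A$ and a point $x$, let $g\in A$ be a point of $A$ closest to $x$; for arbitrary $a\in A$ the median $m=\mu(x,g,a)$ lies in $I(g,a)\subseteq A$ by convexity and in $I(x,g)$, so $d(x,m)\le d(x,g)=d(x,A)\le d(x,m)$, which forces $m=g$ and hence $g\in I(x,a)$, i.e.\ $d(x,a)=d(x,g)+d(g,a)$; thus $g$ is the gate of $x$ in $A$.

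The one genuinely delicate point is part (a): choosing the right quantity attached to an $H$-path ($\Phi$ in the sketch above, used to break ties among minimal-length paths) so that the local ``bump-flattening'' move — the move made available by the quadrangle condition together with the absence of $K_{2,3}$ — is guaranteed to decrease it strictly, and correctly disposing of the degenerate configurations ($p_{j-1}=p_{j+1}$, or the bump sitting at $y$) that would otherwise obstruct the move. Once (a) is in hand, (b) is just a distance induction fed by (a) plus one more application of the quadrangle condition, and (c) is a one-line use of the median property.
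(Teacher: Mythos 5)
Your argument is correct in substance and reaches the lemma by a route organized differently from the paper's, though both ultimately run on the quadrangle condition. The paper proves convexity in a single induction on $d(x,y)$: it takes a shortest path from $y$ to $x$ \emph{inside} $H$ and straightens it step by step into a $G$-geodesic via the quadrangle condition, then feeds the straightened path into the induction hypothesis. Your part (a) isolates exactly that straightening as a standalone statement ($H$ is isometrically embedded and contains $G$-geodesics between its points), proved by an extremal argument rather than by explicit iteration; the tie-breaking functional $\Phi$ is the right choice, since the quadrangle move preserves length while dropping $\Phi$ by $2$, and your handling of the degenerate cases ($p_{j-1}=p_{j+1}$, bump at $y$) is sound. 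Your part (c) is genuinely different from, and simpler than, the paper's: the paper establishes gatedness by running the straightening argument twice more (once for uniqueness of the nearest point, once for the gate property), whereas you get it in one line from the median $\mu(x,g,a)$ together with convexity of $I(g,a)$. One small remark on part (a): the appeal to the absence of $K_{2,3}$ is superfluous, since $q$ lies in $I(p_{j-1},p_{j+1})$ simply because it is a common neighbour of two vertices at distance $2$, so $2$-convexity already yields $q\in H$ without counting common neighbours.

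One step in part (b) is misstated, though the fix is immediate. The point $q$ produced by the quadrangle condition is a common neighbour of $x'$ and $q_1$, so $d(q,q_1)=1$ and $I(q,q_1)=\{q,q_1\}$ does not contain $x'$. The pair you want is $x$ and $q$: since $x\sim x'\sim q$ and $d(x,q)\geq d(y,x)-d(y,q)=2$, we have $d(x,q)=2$ and $x'\in I(x,q)$ with $x,q\in H$, so $2$-convexity gives $x'\in H$ as desired. With that correction the induction closes and the proof is complete.
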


\begin{proof}
	Convex sets are obviously $2$-convex. Therefore we must only prove the converse.	
	Let $x,y \in H$ be any two points in the $2$-convex, connected subgraph $H$ of $G$ with distance $d(x,y)=n$. We show by induction on $n$ that $I(x,y) \subset H$. For $n=0,1,2$ this is clearly true since $H$ is $2$-convex. Thus assume that $I(x,y) \subset H$ for any $x,y \in H$ with $d(x,y)\leq n$. Now let $x,y \in H$ be any two points with $d(x,y)=n+1$ and $z\in I(x,y)$. We then find a discrete geodesic $x=a_0, a_1, \ldots, a_{n+1}=y$ with $a_k=z$. Moreover let $y=b_0, b_1, \ldots, b_l=x$ be a shortest discrete path from $y$ to $x$ in $H$. Since $G$ is median we have for all $u,v \in G$ with $d(u,v)=1$ that $d(x,u)=d(x,v) \pm 1$. Choose $j \in \{1,\ldots,l\}$ minimal such that $d(x,b_{j+1})=d(x,b_j)-1$. Then $d(x,b_{j-1})=d(x,b_j)-1$ and we find by the quadrangle condition some point $b_j'  \in G$ with $d(x,b_j')=d(x,b_{j-1})-1$ and $d(b_{j-1},b_j')=1$. By $2$-convexity $b_j'\in H$ and thus $b_j'\neq b_{j-2}$ since otherwise $b_0, \ldots, b_{j-2}, b_j, \ldots, b_l$ would be a shorter discrete path from $y$ to $x$ in $H$. Inductively we find $b_i'\in H$ with $d(b_{i-1},b_i')=1$ and  $d(x,b_i')=d(x,b_{i-1})-1$ for $i=0, \ldots, j$. We apply again the quadrangle condition for $x,a_n,y,b_1'$ and get some $w \in G$ with $d(x,w)=n-1$. By the induction hypothesis $w\in H$ since $w \in I(x,b_1')$. Therefore $a_n \in H$ since $H$ is $2$-convex and by using the induction hypothesis a second time we finally have $z \in H$.
	
	It remains to show that convex subsets are gated. First we claim that for any $x \in G$ there is a unique $x_0 \in H$ with $d(x,x_0)=d(x,H)$. Indeed if there are two points $x_0,x_0'$ with this property let $x_0=b_0, \ldots, b_l=x_0'$ be a discrete geodesic. As above we find some $b_1'\in H$ with $d(x,b_1')=d(x,x_0)-1 < d(x,H)$. A contradiction.
	
	For any $x \in G$ take $x_0 \in H$ realizing $d(x,H)=d(x,x_0)$. We show that $x_0$ is the gate of $x$. Let $z \in H$ and choose a discrete geodesic $x_0=b_0, \ldots b_l=z$. If $d(x,z) < d(x,x_0) + d(x_0,z)$ we can again find some $b_1' \in H$ with $d(x,b_1)=d(x,x_0)-1$ as before.
\end{proof}

\begin{lem}\label{lem:gates of gates}\cite{vel_convex}
	Let $X$ be a metric space and $A,A' \subset X$ two gated subsets with gate functions $p \colon X \to A$ and $p' \colon X \to A'$. Then we have $p' \circ p \circ p' = p'$ on $A$.
\end{lem}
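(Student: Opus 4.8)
The plan is to unwind the composition pointwise. Fix $a \in A$ and set $b = p'(a) \in A'$ and $c = p(b) \in A$; then the asserted identity $p' \circ p \circ p' = p'$ on $A$ amounts to showing that $p'(c) = b$, that is, that $b$ is the gate of $c$ in $A'$. Since gates, when they exist, are unique, it suffices to verify the defining relation
\[
	d(c,a') = d(c,b) + d(b,a') \quad \text{for all } a' \in A'.
\]

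First I would record the two gate identities at hand. Since $c = p(b)$ is the gate of $b$ in $A$ and $a \in A$, we have $d(a,b) = d(a,c) + d(c,b)$. Since $b = p'(a)$ is the gate of $a$ in $A'$, we have $d(a,a') = d(a,b) + d(b,a')$ for every $a' \in A'$. Substituting the first into the second gives $d(a,a') = d(a,c) + d(c,b) + d(b,a')$ for all $a' \in A'$.

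The key step is then a squeeze using the triangle inequality routed through the original point $a$. For any $a' \in A'$, the triangle inequality gives $d(a,a') \le d(a,c) + d(c,a')$, so the previous equality yields $d(a,c) + d(c,b) + d(b,a') \le d(a,c) + d(c,a')$, hence $d(c,b) + d(b,a') \le d(c,a')$. The reverse inequality $d(c,a') \le d(c,b) + d(b,a')$ is again the triangle inequality, so equality holds throughout. This is exactly the gate relation displayed above, so $p'(c) = b = p'(a)$, which is the claim.

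I do not expect a genuine obstacle: the proof works verbatim in any metric space and needs no median or cube-complex structure. The only mildly non-obvious move is that to identify the gate of $c$ in $A'$ one should not try to argue with $b$ and $c$ in isolation, but bring back the point $a$, whose gated behaviour with respect to both $A$ and $A'$ supplies precisely the additivity needed to close the triangle-inequality estimate.
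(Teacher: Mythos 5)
Your proof is correct and follows essentially the same route as the paper: both arguments show that $b=p'(a)$ already satisfies the gate condition for $c=p(b)$ in $A'$, the paper phrasing this via the interval characterization of gates ($b$ is the gate of $c$ in $A'$ iff $I(b,c)\cap A'=\{b\}$) and you via a direct triangle-inequality squeeze on distances routed through $a$. If anything, your version is slightly more self-contained, since it verifies the defining equality $d(c,a')=d(c,b)+d(b,a')$ for all $a'\in A'$ directly and thus avoids the converse direction of the interval criterion, which itself requires gatedness of $A'$ to justify.
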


\begin{proof}
	First observe that $a$ is the gate of $x$ in $A$ if and only if $I(a,x) \cap A =\{a\}$. Let $x$ be any point in $A$ and $a'$ its gate in $B$. Moreover denote the gate of $a'$ in $A$ by $a$. We have $I(x,a') \cap A' = \{a'\}$ and $I(a',a)\cap A =\{a\}$. Since $a$ is the gate of $a'$ in $A$ it follows that $a \in I(x,a')$ and therefore $I(a,a') \subset I(x,a')$ and $I(a,a') \cap A' =\{a'\}$. Hence $a'$ is also the gate of $a$ in $A'$.
\end{proof}

	Let $\mathcal{C}$ be a cube complex. Then its $1$-skeleton defines a graph, called the \emph{underlying graph} of $\mathcal{C}$.

\begin{rmk}
	Given a connected subgraph $A$ of the underlying graph it naturally defines a subcomplex containing all cubes with vertices in $V(A)$. Therefore we can always consider the subgraph as a subcomplex and vice versa. 
\end{rmk}

\begin{thm} \emph{(\cite{chepoi}, Theorems 6.1 \& 6.8)}
	Let $\mathcal{C}$ be a cube complex. Then the following statements are equivalent:
	\begin{enumerate}[(i)]
	\item $(|\mathcal{C}|,d_2)$ is CAT(0).
	\item Its underlying graph $G$ is a median graph.
	\item $(|\mathcal{C}|,d_1)$ is a median metric space.
	\end{enumerate}
\end{thm}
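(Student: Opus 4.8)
The plan is to prove the two equivalences (i)$\,\Leftrightarrow\,$(ii) and (ii)$\,\Leftrightarrow\,$(iii) separately. A caveat first: the statement tacitly assumes that $\mathcal{C}$ carries an $n$-cube on every set of vertices whose induced subgraph in $G$ is the $1$-skeleton of an $n$-cube (otherwise an empty square, with median $1$-skeleton $C_4$, would be a counterexample), and we use this throughout. With this convention the cube-complex axioms already force $G$ to contain no $K_{2,3}$: if $\{a,b\}$ and $\{x,y,z\}$ were the parts of a $K_{2,3}$, the squares spanned by $\{a,x,b,y\}$ and $\{a,x,b,z\}$ would intersect along the path $a$--$x$--$b$, which is not a face of a square, contradicting the second cube-complex axiom.

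For (i)$\,\Rightarrow\,$(ii) it then remains, by the characterization of median graphs quoted above, to check triangle-freeness and the quadrangle condition. Triangle-freeness follows from the angle estimate in $d_2$: two distinct edge-directions at a vertex lie at distance $\ge\pi/2$ in the link (being a positive multiple of $\pi/2$), so a triangle in $G$ would be a geodesic triangle with angle sum $\ge 3\pi/2>\pi$, impossible in a CAT(0) space. For the quadrangle condition, given $u,v,w,z$ as in its statement, the hyperplanes $\mathfrak{h}_v,\mathfrak{h}_w$ dual to the edges $zv,zw$ are distinct, and the distance hypotheses force both of them to separate $u$ from $z$ while placing $v$ and $w$ in opposite halfspaces of each; hence $\mathfrak{h}_v$ and $\mathfrak{h}_w$ cross, and Theorem~\ref{thm:hyperplanes} then produces the square on $z,v,w$ whose fourth vertex $x$ is the required common neighbour with $d(u,x)=d(u,v)-1$.

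For (ii)$\,\Rightarrow\,$(i) I would invoke the combinatorial CAT(0) criterion quoted above. Deducing the cube-extension condition from the median axioms is the easy, purely local part (induction on the dimension, using the quadrangle condition and $K_{2,3}$-freeness to fill in the missing cube). The genuine difficulty — and the step I expect to be the main obstacle — is simple connectivity; this is a discrete Cartan–Hadamard statement, which I would prove using that a median graph is obtained from a point by a sequence of convex expansions (Mulder), each of which at the level of complexes amounts to gluing a prism $L\times[0,1]$ along a convex, hence by Lemma~\ref{lem:convex subgraphs} gated and contractible, subcomplex, so that $|\mathcal{C}|$ is built up by homotopy equivalences and is contractible. (Alternatively one peels off hyperplanes one at a time, using Theorem~\ref{thm:hyperplanes} to see that the carrier of a hyperplane is a deformation retract of a halfspace neighbourhood.)

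Finally, for (ii)$\,\Leftrightarrow\,$(iii): the direction (iii)$\,\Rightarrow\,$(ii) is elementary, since within a cube a $d_1$-geodesic may be taken monotone in each coordinate, whence $d_1$ restricts to the graph metric on $V$ and the unique $d_1$-median of three vertices is again a vertex, so $(V,d)$ is a median metric space. For (ii)$\,\Rightarrow\,$(iii) I would pass through (i) and use its hyperplanes: after orienting each hyperplane $\mathfrak{h}$, assign to $x\in|\mathcal{C}|$ its coordinate $f_{\mathfrak{h}}(x)\in[0,1]$ across $\mathfrak{h}$ (with value in $\{0,1\}$ off the carrier of $\mathfrak{h}$); one checks $f_{\mathfrak{h}}$ is well defined, that a $d_1$-geodesic meets each hyperplane at most once, and hence that $d_1(x,y)=\sum_{\mathfrak{h}}|f_{\mathfrak{h}}(x)-f_{\mathfrak{h}}(y)|<\infty$, so $x\mapsto(f_{\mathfrak{h}}(x))_{\mathfrak{h}}$ embeds $(|\mathcal{C}|,d_1)$ isometrically into $l_1(\mathcal{H})$. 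It then remains to see that the image is median, i.e.\ that the coordinatewise median of three points of $|\mathcal{C}|$ is realized in $|\mathcal{C}|$; using that each cube is convex in $G$ and hence gated (Lemma~\ref{lem:convex subgraphs}) together with the compatibility of gates (Lemma~\ref{lem:gates of gates}), this reduces to the existence of the combinatorial median of three vertices, which is exactly~(ii).
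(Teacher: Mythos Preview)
The paper does not prove this statement at all; it is quoted from \cite{chepoi} (Theorems~6.1 and~6.8) and used as a black box throughout. There is therefore no argument in the paper to compare your sketch against.

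A few remarks on your outline nonetheless. Your opening caveat is to the point: the equivalence as stated only makes sense if $\mathcal{C}$ is the \emph{full} cube complex on $G$ (every cubical subgraph of $G$ is filled); Chepoi formulates his theorem precisely as a correspondence between median graphs and the CAT(0) cube complexes obtained this way, so the ambiguity lies in the paper's phrasing rather than in your reading. Your $K_{2,3}$ argument via the face-intersection axiom is correct under that convention. The soft spot is (ii)$\Rightarrow$(i): Mulder's convex-expansion theorem is a statement about \emph{finite} median graphs, so for arbitrary $G$ you would need either a transfinite version or a direct disc-filling argument (Chepoi does the latter, reducing a minimal combinatorial loop using the quadrangle condition). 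In (ii)$\Rightarrow$(iii), the reduction of the median of three \emph{arbitrary} points of $|\mathcal{C}|$ to the combinatorial median of three vertices via gates is more delicate than you indicate---one has to show that the coordinatewise median actually lands in a single cube---but the overall strategy (the $l_1$-embedding via hyperplane coordinates) is the standard one in the literature.
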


In general a subset $H$ of a metric space $X$ is called a \emph{halfspace} if $H$ and $X\setminus H$ are convex. But as a consequence of the following lemma if we speak of a halfspace in a CAT(0) cube complex we always refer to a halfspace arising from a splitting by some hyperplane. Note that this convention also presumes that halfspaces of CAT(0) cube complexes are non-empty and strict subsets.

\begin{lem}\label{lem:halfspaces and hyperplanes}
	Let $G$ be a median graph arising as the underlying graph of a CAT(0) cube complex. Then every hyperplane splits $G$ into two halfspaces and conversely every pair of non-empty complementary halfspaces is separated by a hyperplane.
\end{lem}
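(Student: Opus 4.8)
The plan is to set up a correspondence between the two sides of a hyperplane and the complementary halfspaces of $G$. Throughout I shall use the observation from the Remark on the triangle condition that for adjacent vertices $u,v$ and any vertex $x$ the values $d(x,u)$ and $d(x,v)$ differ by exactly $1$ (so $G$ is bipartite), together with the elementary fact that a hyperplane, being a union of midcubes $\{x_k=\tfrac12\}$, contains no vertex of $\mathcal C$.

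For the first assertion, fix a hyperplane $\mathfrak h$ and let $E_{\mathfrak h}$ be the set of edges of $G$ dual to it. By Theorem~\ref{thm:hyperplanes}(ii) the two convex connected components $\mathcal C^{+},\mathcal C^{-}$ of $|\mathcal C|\setminus\mathfrak h$ partition $V$ into $P:=\mathcal C^{+}\cap V$ and $Q:=\mathcal C^{-}\cap V$. A short local inspection of a single edge---its interior meets $\mathfrak h$ precisely when the edge lies in $E_{\mathfrak h}$, and then only in its midpoint, which separates its two half-edges---shows that an edge of $G$ joins a vertex of $P$ to one of $Q$ exactly when it belongs to $E_{\mathfrak h}$, so the components of $G\setminus E_{\mathfrak h}$ refine the partition $\{P,Q\}$. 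To conclude that $\mathfrak h$ splits $G$ into the two complementary halfspaces $P$ and $Q$ it then suffices to show that $P$ and $Q$ are graph-convex (hence connected). For this I would invoke the standard fact---belonging to the circle of results around Theorem~\ref{thm:hyperplanes}, cf.\ \cite{sageev}---that an edge-path geodesic of $\mathcal C^{(1)}$ crosses each hyperplane at most once: if $x,y\in P$ but some $z\in I(x,y)$ were in $Q$, a geodesic from $x$ to $y$ through $z$ would cross $\mathfrak h$ at least twice, which is impossible. (One may moreover identify $P$ with $\{x:d(x,u)<d(x,v)\}$ for any edge $uv\in E_{\mathfrak h}$ having $u\in P$, although this is not needed below.)

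For the converse, let $H$ and $H^{c}=V\setminus H$ be non-empty and convex. Since $G$ is connected there is an edge $e=uv$ with $u\in H$ and $v\in H^{c}$; let $\mathfrak h$ be the hyperplane dual to $[e]$ and let $P\ni u$ and $Q\ni v$ be its two halfspaces, as furnished by the first part. Choose $x\in P$ and suppose toward a contradiction that $x\in H^{c}$. Then $I(x,v)\subseteq H^{c}$ by convexity of $H^{c}$, so $u\notin I(x,v)$; since $d(x,u)$ and $d(x,v)$ differ by exactly $1$, this forces $d(x,u)=d(x,v)+1$, i.e.\ $v\in I(x,u)$. But $x,u\in P$ and $P$ is convex, so $v\in P$, contradicting $v\in Q$. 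Hence $P\subseteq H$, and the symmetric argument gives $Q\subseteq H^{c}$. As $\{P,Q\}$ and $\{H,H^{c}\}$ are both partitions of $V$, it follows that $H=P$ and $H^{c}=Q$, so $H$ and $H^{c}$ are exactly the two sides of the hyperplane $\mathfrak h$.

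The one genuinely non-formal ingredient is the step taking the $d_2$-geometric statement of Theorem~\ref{thm:hyperplanes}(ii) to the combinatorial fact that edge-path geodesics cross hyperplanes at most once; I expect this to be where most care is needed, the edge--midcube bookkeeping and the interval manipulations being routine. An alternative, purely combinatorial route would instead prove directly that in a median graph the $\Box$-equivalence classes of edges coincide with the convex edge cuts (using Lemma~\ref{lem:convex subgraphs} to recognize convexity), where the delicate point becomes the fact that every $4$-cycle of $\mathcal C^{(1)}$ bounds a square.
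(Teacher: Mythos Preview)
Your proof is correct. For the first implication both you and the paper simply invoke Theorem~\ref{thm:hyperplanes}; you are more explicit about the passage from the geometric statement to graph-convexity of the two sides, which the paper leaves as a ``direct consequence''.

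For the converse, however, your route is genuinely different from the paper's. The paper uses the gate machinery: by Lemma~\ref{lem:convex subgraphs} both halfspaces $H,H'$ are gated, with gate maps $p,p'$; choosing $x\in p(H')$ and $x'=p'(x)$, Lemma~\ref{lem:gates of gates} gives $p(x')=x$ and hence $d(x,x')=1$, so $H=G(x,x')=\{y:d(x,y)<d(x',y)\}$ and $H'=G(x',x)$, after which an external lemma (Lemma~6.4 of \cite{chepoi}) identifies these with the two sides of the hyperplane dual to the edge $xx'$. You instead pick \emph{any} crossing edge $uv$, let $\{P,Q\}$ be the hyperplane partition it determines via the first part, and compare the two partitions directly: the short interval argument using bipartiteness and convexity of $H^{c}$ and of $P$ forces $P\subseteq H$. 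Your approach is more self-contained---no gates, no external citation---and arguably cleaner; the paper's approach has the minor structural advantage of producing a canonical edge (one whose endpoints are mutual gates), but then defers the final identification to a black box.
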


\begin{proof}
	The first implication is a direct consequence of Theorem~\ref{thm:hyperplanes}.
	Conversely let $H,H'$ be two complementary halfspaces. By Lemma~\ref{lem:convex subgraphs} there are gate maps $p \colon G \to H, p' \colon G \to H'$. Take $x \in p(H')$ and $x'=p'(x)$. Then $p(x')=x$ (Lemma~\ref{lem:gates of gates}) and $d(x,x')=1$. Therefore the halfspaces are given by
	\begin{align*}
		H &=G(x,x')=\{y \in G : d(x,y)< d(x',y) \} \text{ and } \\
		H'&=G(x',x)=\{y \in G : d(x',y)< d(x,y) \}
	\end{align*}
	and according to Lemma~6.4 in \cite{chepoi} these two sets are separated by the hyperplane transversal to the edge $xx'$.
\end{proof}

\begin{definition}
	The \emph{width} of a CAT(0) cube complex $\mathcal{C}$ is the maximal length of a chain of halfspaces, i.e.
	\begin{equation}
		\operatorname{width}(\mathcal{C})= \sup \{n : \exists H_1 , \ldots , H_n \text{ halfspaces with } H_1 \subsetneq H_2 \subsetneq \ldots \subsetneq H_n \}.
	\end{equation}
\end{definition}

	If $\mathfrak{h}$ is a hyperplane that splits the CAT(0) cube complex $\mathcal{C}$ into two halfspaces with one of them minimal (and hence the other one is maximal) then we call $\mathfrak{h}$ \emph{extremal}.

\begin{lem}\label{lem:convex image}
	Let $G$ be a median graph with two complementary halfspaces $H$ and $H'$. Then the image of $H'$ under the gate function $p \colon G \to H$ is a convex set.
\end{lem}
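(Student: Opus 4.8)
The plan is to identify $p(H')$ concretely as an "inner boundary" of $H$. By Lemma~\ref{lem:halfspaces and hyperplanes} there is a hyperplane $\mathfrak h$ having $H$ and $H'$ as its two complementary sides. Write $\partial H$ for the set of vertices of $H$ that are endpoints of an edge dual to $\mathfrak h$, and recall the standard facts (not recalled in the preliminaries above) that each vertex of $H'$ adjacent to $H$ has a \emph{unique} neighbour across $\mathfrak h$, and that the carrier $N(\mathfrak h)$ is a convex subcomplex of $|\mathcal C|$ which, with its product structure $N(\mathfrak h)\cong \mathfrak h\times[0,1]$, meets $H$ in $\partial H=\mathfrak h\times\{0\}$ and $H'$ in $\partial H'=\mathfrak h\times\{1\}$, these two copies of $\mathfrak h$ being matched vertex by vertex along the dual edges. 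I claim $p(H')=\partial H$; granting this, $\partial H$ is a horizontal copy of $\mathfrak h$ inside the convex subcomplex $N(\mathfrak h)$, hence convex in $N(\mathfrak h)$ and therefore convex in $|\mathcal C|$, which is the assertion.

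For $p(H')\subseteq \partial H$: given $x'\in H'$ put $y=p(x')$, pick a discrete geodesic from $x'$ to $y$, and note that by convexity of $H$ and $H'$ it crosses $\mathfrak h$ exactly once, say along an edge from $c\in H'$ to $c''\in H$, after which it stays in $H$. Then $c''\in I(x',y)$, while $y$ being the gate of $x'$ in $H$ gives $d(x',c'')=d(x',y)+d(y,c'')$; combined with the geodesic identity $d(x',y)=d(x',c'')+d(c'',y)$ this forces $d(y,c'')=0$, so $y=c''\in\partial H$. For $\partial H\subseteq p(H')$: take $w\in\partial H$ with dual neighbour $w'\in H'$. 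By the inclusion just proved $p(w')\in\partial H$, and since $p(w')$ is the gate of $w'$ in $H$ it is in particular the gate of $w'$ in the subcomplex $\partial H$; computing this gate inside the convex subcomplex $N(\mathfrak h)\cong\mathfrak h\times[0,1]$, where $w'=(w,1)$ and $\partial H=\mathfrak h\times\{0\}$, the gate is the matched vertex $w$. Hence $w=p(w')\in p(H')$.

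The main obstacle is the second inclusion: it rests on the structural facts about the carrier of a hyperplane (its convexity and product decomposition, and uniqueness of the neighbour across $\mathfrak h$) rather than on the lemmas assembled so far. If one prefers to stay within the median-graph framework of the preliminaries, an alternative is to check $2$-convexity and apply Lemma~\ref{lem:convex subgraphs}: $p(H')$ is a connected subgraph as a $1$-Lipschitz image, and given $y_1,y_2\in p(H')$ with $d(y_1,y_2)=2$ and a common neighbour $z$, one has $z\in H$ by convexity of $H$; taking dual neighbours $y_i'\in H'$ of the $y_i\in\partial H$, the median $\mu(y_1',z,y_2')$ lies in $H'$ (convexity), is adjacent to $z$ (the alternatives $d(z,\mu)\in\{0,2\}$ being excluded via uniqueness of the hyperplane-crossing neighbour), and is sent by $p$ to $z$ — this last step once more being the statement that the $H$-endpoint of an edge dual to $\mathfrak h$ is the gate of its $H'$-endpoint, which is the genuine crux in either approach.
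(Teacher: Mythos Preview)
Your argument is correct but follows a different route from the paper. You identify $p(H')$ with the combinatorial boundary $\partial H$ and then invoke convexity of the hyperplane carrier $N(\mathfrak h)\cong\mathfrak h\times[0,1]$; this is conceptually clean and yields more information (an explicit description of $p(H')$), but it imports structural facts about carriers that the paper has not set up. The paper instead verifies $2$-convexity directly using the Gromov link condition: given $x,y\in p(H')$ at distance~$2$ and a hypothetical $z_0\in I(x,y)\setminus p(H')$, it lifts $x,y$ to neighbours $x',y'\in H'$ via Lemma~\ref{lem:gates of gates}, finds a common neighbour $z'$ of $x',y'$ whose gate $z\in H$ is a common neighbour of $x,y$, and observes that the three squares $zxx'z'$, $zyy'z'$, $zxz_0y$ share the vertex $z$ and pairwise share an edge; the link condition then forces a $3$-cube, whose remaining corner $z_0'\in H'$ satisfies $p(z_0')=z_0$, a contradiction. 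Your alternative sketch at the end is in the same spirit but reaches for medians where the paper uses the link condition.

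One small remark: your second inclusion $\partial H\subseteq p(H')$ is easier than you make it. If $w'\in H'$ has a neighbour $w\in H$, then $d(w',H)=1$, and the uniqueness of the nearest point proved inside Lemma~\ref{lem:convex subgraphs} forces $p(w')=w$ immediately; no carrier computation is needed for this step.
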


\begin{proof}
	Using Lemma~\ref{lem:convex subgraphs} it is enough to show $2$-convexity. Let $x,y \in p(H')$ with $d(x,y)=2$ and assume that there is some $z_0 \in I(x,y) \setminus p(H')$. Clearly $z_0$ is a neighbor of $x$ and $y$. By Lemma~\ref{lem:gates of gates} we have that $x=p(p'(x))$ and $y=p(p'(y))$. Denote $x'=p'(x), y'=p'(y)$ and observe that $d(x,x')=1$ and $d(y,y')=1$. Moreover we have $d(x',y')=2$ since gate functions are distance non-increasing. There is at least one common neighbor $z'$ of $x'$ and $y'$ which is mapped to a common neighbor $z$ of $x$ and $y$ such that $z$ and $z'$ mutually are gates of each other. We then have three squares $zxx'z'$, $zyy'z'$ and $zxz_0y$ which pairwise intersect in one edge and have a common vertex $z$. Therefore the link condition for CAT(0) cube complexes implies that this squares must be contained in a $3$-cube. Especially there is some further corner $z_0'$ of this cube which lies in $H'$ and is a neighbor of $z_0$. It follows that $z_0=p(z_0') \in p(H')$ which contradicts our assumption.
\end{proof}


\section{Geodesic Cube Complexes}\label{sec:geodesic}


By the Hopf-Rinow Theorem (see Theorem I.3.7 in \cite{bridson}) we get as a direct consequence of Theorem~\ref{thm:complete length space} that locally compact, i.e. locally finite, cube complexes with finitely many isometry types of cubes are geodesic. Here we prove the statement in a more general setting.

\begin{thm}\label{thm:geodesic cube complexes}
	Let $\mathcal{C}$ be a cube complex with finitely many isometry types of cubes where the metrics on the cubes $[0,1]^n$ are induced by norms on $\mathbb{R}^n$ satisfying
	\begin{equation}\label{eq: good norms}
		\|\bar x \| \leq \|x \| \text{ if } |\bar x^l | \leq |x^l| \text{ for all } l=1,\ldots, n.
	\end{equation}
	Then $(|\mathcal{C}|,d)$ is a complete geodesic space.
\end{thm}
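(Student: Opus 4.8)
The plan is to build on Theorem~\ref{thm:complete length space}: it already gives that $(|\mathcal{C}|,d)$ is a complete length space, so it remains only to upgrade ``length space'' to ``geodesic space'', i.e.\ to show that for all $x,y\in|\mathcal{C}|$ the infimum defining $d(x,y)$ is attained by a string. This is enough: a string $\Sigma=(x_0,\dots,x_m)$ with $l(\Sigma)=d(x,y)$ has as its canonical path $\gamma$ the concatenation of the straight segments $[x_i,x_{i+1}]\subset C_i$ (each cube, carrying a norm metric, is a geodesic space in which segments are geodesics), and $\gamma$ has length $d(x,y)=d(\gamma(0),\gamma(1))$, hence is a geodesic. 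So it suffices to produce a \emph{minimizing string} between any $x,y$; completeness of $|\mathcal{C}|$ will not be needed for this.

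The key technical step is a finiteness statement: for every $L$ there is $N=N(L)$, depending only on $L$ and the finitely many isometry types, such that between any two points at distance $\le L$ there are strings of length arbitrarily close to the distance with combinatorial length $m\le N$. The strategy is to reduce a given string by repeatedly straightening segments inside cubes, merging consecutive cubes when one contains the other, and --- crucially --- \emph{folding}: whenever two non-consecutive points of the string lie in a common cube, replace the portion between them by a straight segment there. Hypothesis~\eqref{eq: good norms} guarantees that folds, and more generally all coordinate projections $C\to F$ of a cube onto a face (coordinatewise $|\pi_F(x)^l-\pi_F(y)^l|\le|x^l-y^l|$), are $1$-Lipschitz, so none of these moves increases the length. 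Combining this with the finitely many shapes one extracts a uniform $\epsilon_0>0$ such that, once no further reduction is possible, every \emph{interior} segment has length $\ge\epsilon_0$: a shorter one would force its endpoint $x_i$ to lie within $\epsilon_0$ of the face $C_i\cap C_{i+1}$, hence within $\epsilon_0$ of $(C_{i-1}\cap C_i)\cap(C_i\cap C_{i+1})\subseteq C_{i-1}\cap C_{i+1}$, and for small $\epsilon_0$ this enables a further fold. Thus $m\le L/\epsilon_0+2$. I expect this to be the main obstacle, and it is exactly here that \eqref{eq: good norms} is indispensable: without it, near-minimizing strings could run off through the (possibly non-locally-finite) complex without ever becoming short, so the infimum need not be attained.

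Granting the finiteness step, the conclusion follows by a soft compactness argument. Fix $x,y$, pick strings $\Sigma_k$ from $x$ to $y$ with $l(\Sigma_k)\to d(x,y)$, and replace each by a reduced string $\Sigma_k'$ with $l(\Sigma_k')\le l(\Sigma_k)$ and combinatorial length $\le N:=N(d(x,y)+1)$. To each $\Sigma_k'$ associate its \emph{combinatorial type}: the sequence of isometry types of its cubes, the gluing faces $C_{i-1}\cap C_i$ recorded inside these model cubes, and the positions of $x$ in $C_0$ and of $y$ in $C_{m-1}$ --- these last are among finitely many possibilities, since $x$ (resp.\ $y$) lies in the interior of a fixed cube of $\mathcal{C}$ which must occur as a face of $C_0$ (resp.\ $C_{m-1}$). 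As $m\le N$, there are only finitely many types, so pass to a subsequence with constant type $T$. For type $T$ a reduced string is determined by the tuple of positions $v=(v_0,\dots,v_m)$ of its vertices, each $v_i$ ranging over a fixed compact face, and its length is a continuous function of $v$; pass to a further subsequence with $v^{(k)}\to\bar v$. Placing points at positions $\bar v_i$ along the cubes of $\Sigma_1'$ yields a genuine string $\Sigma_\infty$ from $x$ to $y$ --- the endpoint positions are constant along the subsequence, so they still realize $x$ and $y$, and consecutive $\bar v_i,\bar v_{i+1}$ lie in the common cube $C_i$ of $\Sigma_1'$ --- with $l(\Sigma_\infty)=\lim_k l(\Sigma_k')=d(x,y)$ by continuity. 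So $\Sigma_\infty$ is a minimizing string, which by the first paragraph provides a geodesic from $x$ to $y$; hence $(|\mathcal{C}|,d)$ is geodesic.
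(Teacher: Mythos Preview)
Your overall plan is sound and matches the paper's: by Theorem~\ref{thm:complete length space} the space is a complete length space, so it suffices to exhibit a minimizing string between any two points; and once one has a uniform bound $N$ on the combinatorial length of near-minimizing strings, the compactness argument over finitely many model configurations (your last paragraph, the paper's Lemmas~\ref{lem:shortest m-string in finite complex}--\ref{lem:shortst m-string in finite dimensional complex}) finishes the job.

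The gap is in your $\epsilon_0$ argument for the bound $m\le N$. You claim that if a segment $[x_i,x_{i+1}]$ is short then, since $x_i\in F_i=C_{i-1}\cap C_i$ is within $\epsilon_0$ of $F_{i+1}=C_i\cap C_{i+1}$, it is within $\epsilon_0$ of $F_i\cap F_{i+1}\subset C_{i-1}\cap C_{i+1}$, and that ``this enables a further fold.'' But your fold, as you defined it, requires two \emph{non-consecutive} string points to share a cube; here you only learn that $x_i$ is near a face common to $C_{i-1}$ and $C_{i+1}$, not that $x_{i-1}$ and $x_{i+1}$ lie in a common cube. Projecting $x_i$ to $x_i'\in F_i\cap F_{i+1}$ reroutes the string through $C_{i-1},C_{i+1}$ but keeps the same number of points; the projection is $1$-Lipschitz inside $C_i$, so $d(x_i',x_{i+1})\le d(x_i,x_{i+1})$, but $x_{i-1}$ lives in $C_{i-1}$, where you cannot invoke that projection, and $d(x_{i-1},x_i')$ may well exceed $d(x_{i-1},x_i)$. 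So the move neither shortens $m$ nor preserves length in general, and the conclusion ``every interior segment has length $\ge\epsilon_0$'' is unsupported. (Indeed, near a vertex where many cubes meet, irreducible strings can have arbitrarily short hops.)

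The paper handles this step quite differently. It strengthens the notion of taut string by an extra \emph{coordinatewise monotonicity} condition (Definition~\ref{def:taut}(iii)): after setting up coordinates consistently across consecutive cubes, one requires $x_i^l\le x_{i+1}^l$ for every $l$. Hypothesis~\eqref{eq: good norms} is used precisely here (Lemma~\ref{lem:tauten minimal m-strings}): it lets one replace coordinates by $\min\{x_i^l,x_m^l\}$ without increasing length, achieving monotonicity. Once coordinates are monotone, a pigeonhole argument (Lemma~\ref{lem:taut strings are long}) shows that an $(n+2)$-string in dimension $\le n$ must traverse an entire unit interval in some coordinate, hence has length $\ge 1$; this gives $l(\Sigma)\ge\alpha m-1$ (Corollary~\ref{cor:taut strings are long}), which is the finiteness input your compactness step needs. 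Your use of~\eqref{eq: good norms} for $1$-Lipschitz face projections is correct but not enough; the monotonization is where the hypothesis really earns its keep.
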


Since (\ref{eq: good norms}) is true for all $l_p$-norms, Theorem~\ref{thm:geodesic} follows.
\\

We first prove the statement for $d_\infty$ and then discuss in a second step which properties of $\| \cdot \|_\infty$ are really needed. For the first step we follow the strategy of the proof of Theorem I.7.19 in \cite{bridson} which is based on finding taut strings that approximate distances. Since geodesics in $(\mathbb R^n , d_\infty)$ are not unique we must demand further properties for taut strings.

In the following let $\mathcal{C}$ be a cube complex endowed with the metric $d_\infty$. Given some string $\Sigma = (x_0, \ldots, x_m)$ let $C_i$ be the largest cube containing $x_i,x_{i+1}$. Furthermore let $F_i=C_i \cap C_{i-1}$ be the common face of the two consecutive cubes. Introduce inductively coordinates $(x^1, \ldots , x^n)$ on each cube $C_i=\prod_{l=1}^n I_l \subset [0,1]^n$ with $I_l=\{0\}$ or $[0,1]$ such that the following holds:
\begin{enumerate}[(i)]
	\item $x_0^l \leq x_1^l$ for all $l$,
	\item $(0, \ldots ,0) \in F_i$ and
	\item on $F_i$ the coordinates of $C_{i-1}$ and $C_i$ coincide (modulo $1$).
\end{enumerate}

\begin{definition}\label{def:taut}
	An $m$-string $\Sigma = (x_0, \ldots , x_m)$ in $\mathcal{C}$ is \emph{taut} if it satisfies the following conditions for $i=1, \ldots, m-1$:
	\begin{enumerate}[(i)]
	\item There is no cube containing $\{x_{i-1},x_i,x_{i+1}\}$.
	\item If $x_{i-1},x_i \in C_{i-1}$ and $x_i, x_{i+1} \in C_i$ then $x_i \in I_{L_i}(x_{i-1},x_{i+1})$ in $L_i = C_{i-1} \cup C_i$, where $I_{L_i}$ denotes the metric interval with respect to the induced length metric on $L_i$.
	\item For each $l$ we have $x_i^l \leq x_{i+1}^l$.
	\end{enumerate}
\end{definition}

\begin{rmk}
	In the euclidean case, property (ii) already implies that the coordinates $x_i^l$ are monotone, but for $d_\infty$ we have to demand (iii) additionally to ensure that taut strings which cross many cubes are long.
\end{rmk}

\begin{lem}\label{lem:taut strings are long}
	Let $\mathcal{C}$ be a finite dimensional cube complex with dimension bounded by $n$. If $\Sigma = (x_0, \ldots , x_{n+2})$ is a taut $(n+2)$-string then $l(\Sigma) \geq 1$.
\end{lem}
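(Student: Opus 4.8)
The plan is to exploit the monotone-coordinates setup and track how many new coordinate directions each step of the string must activate. Since $\mathcal{C}$ has dimension at most $n$, each cube $C_i$ involves at most $n$ of the coordinate directions; the key observation will be that a taut string of length $n+2$ is forced, by condition (i) of Definition~\ref{def:taut}, to keep ``turning'' into genuinely new cubes, and condition (iii) guarantees that once a coordinate $x^l$ has started to increase it never decreases again. I would set up a bookkeeping of the ``support'' $S_i \subseteq \mathbb{N}$ of directions in which $\Sigma$ has made progress up to step $i$, i.e.\ the set of $l$ with $x_i^l > x_0^l$ (in the globally consistent coordinates built before the definition), and argue that $|S_{i+1}| \geq |S_i| + $ something whenever no cube contains $\{x_{i-1},x_i,x_{i+1}\}$.

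The core steps, in order, would be: (1) Fix the coordinates $(x^1,\dots,x^n)$ on each $C_i$ as in the paragraph preceding Definition~\ref{def:taut}, so that all $x_i^l$ are monotone nondecreasing in $i$ by condition~(iii). (2) Show that if $x_{i-1},x_i,x_{i+1}$ lie in cubes $C_{i-1}, C_i$ with $C_{i-1} \cap C_i = F_i$, and there is \emph{no} cube containing all three points, then the step from $x_{i-1}$ to $x_{i+1}$ must increase some coordinate not already ``used'' — more precisely, $x_{i+1}$ must differ from $x_{i-1}$ in a direction that is not a coordinate direction of $C_{i-1}$, or else $C_{i-1}$ (or a face-extension of it) would already contain all three. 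This is where condition~(i) of tautness does the real work. (3) Conclude that over $n+2$ steps the total ``new directions opened'' exceeds $n$, which is impossible unless some coordinate has traversed a full unit of length, or more carefully: partition the $n+2$ edges of the string and show that the coordinates, being monotone and confined to $[0,1]$ in each of the $\leq n$ directions that ever appear, must collectively move a total variation of at least $1$, giving $l(\Sigma) = \sum d^{C_i}(x_i,x_{i+1}) \geq \max_l (x_{n+2}^l - x_0^l)$-type bound $\geq 1$. (4) Assemble the inequality: since $d^{C_i}(x_i,x_{i+1}) \geq |x_{i+1}^l - x_i^l|$ for every $l$, and the $x_i^l$ are monotone, $l(\Sigma) \geq \sum_i |x_{i+1}^{l} - x_i^{l}| = x_{n+2}^{l} - x_0^{l}$ for each $l$; it then remains to find some $l$ with $x_{n+2}^l - x_0^l \geq 1$, equivalently $x_{n+2}^l = 1$ while $x_0^l = 0$ in the cube containing that last edge.

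The main obstacle I anticipate is step (2)–(3): making precise the claim that ``no cube contains $\{x_{i-1},x_i,x_{i+1}\}$'' forces a new coordinate direction to be opened, and in particular controlling how coordinate frames are shared across the faces $F_i$ so that ``new direction'' is a well-defined global notion. The subtlety is that consecutive cubes $C_{i-1}, C_i$ share only the face $F_i$, and a direction that is a coordinate of $C_i$ but not of $C_{i-1}$ might correspond to a direction already ``seen'' earlier in the string via some other cube; one has to argue, using that $\mathcal{C}$ is CAT(0) (hence the link condition, hence cubes are determined by their edge-directions at a vertex) — or just using the combinatorics of faces — that this cannot create a cube containing the three points unless the direction is genuinely fresh. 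A clean way around this may be to argue by contradiction and by a minimal-counterexample / induction on $n$: if $l(\Sigma) < 1$ then every coordinate satisfies $x_{n+2}^l - x_0^l < 1$, so the whole string stays in a neighborhood that retracts onto a lower-dimensional sub-situation, and one applies the lemma in dimension $n-1$ to a suitable sub-string, contradicting condition~(i). I expect the author's actual proof to run this induction, peeling off one coordinate at a time and using tautness condition~(ii) to guarantee the sub-string remains taut.
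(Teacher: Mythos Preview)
Your instincts in steps (1), (2), and (4) are right: monotonicity plus ``at each $i$ some coordinate $l_i$ jumps from $0$ to positive'' is exactly the engine, and the final bound $l(\Sigma)\ge \sum_i |x_{i+1}^l - x_i^l|$ is how the paper closes. The gap is in step (3), and your proposed resolution of it (induction on $n$, use of the CAT(0) link condition, use of tautness condition (ii)) goes in the wrong direction.

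First, the coordinates built before Definition~\ref{def:taut} are \emph{not} global: they are introduced cube-by-cube and agree on the face $F_i$ only \emph{modulo $1$}. So your set $S_i=\{l:x_i^l>x_0^l\}$ is not well-defined across cubes, and the claim ``$|S_{i+1}|>|S_i|$, hence more than $n$ directions, impossible'' breaks down --- a direction \emph{can} be ``opened'' twice, precisely because a coordinate can reach $1$ in $C_{j-1}$ and reappear as $0$ in $C_j$. That phenomenon is not an obstacle to be worked around; it is the whole point.

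The paper's argument is a one-line pigeonhole on the labels themselves. For each $i=1,\dots,n+1$ one has (from your step (2)) an index $l_i\in\{1,\dots,n\}$ with $x_i^{l_i}=0$ and $x_{i+1}^{l_i}>0$ in the coordinates of $C_i$. Since there are $n+1$ indices drawn from $\{1,\dots,n\}$, some label repeats: $l_{i_1}=l_{i_2}$ with $i_1<i_2$. Monotonicity within each cube plus the modulo-$1$ gluing means the only way the $l_{i_1}$-th coordinate can return to $0$ at step $i_2$ is to have reached $1$ at some $i'$ with $i_1<i'\le i_2$. Summing the increments of that single coordinate from $i_1$ to $i'$ gives $l(\Sigma)\ge 1$ directly.

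No CAT(0) hypothesis, no induction on $n$, and no use of tautness condition (ii) is needed; only conditions (i) and (iii) enter.
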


\begin{proof}
	By property (iii) we get that $x_i^l \leq x_{i+1}^l$ for all $i,l$.	
	For $i=1, \ldots, n+1$ we then make the following observations. Since $x_i \in F_i$ but $x_{i+1} \notin F_i$ there is some $l_i$ such that $x_i^{l_i}=0$ and $x_{i+1}^{l_i} \neq 0$.
	This leads to chains $0=x_i^{l_i} < x_{i+1}^{l_i} \leq \ldots$ for $i=1, \ldots, n+1$. Since $l_i \in \{1, \ldots, n\}$ by the pigeonhole principle there are $i_1< i_2$ with $l_{i_1}=l_{i_2}$. But this implies that the chain $0=x_{i_1}^{l_{i_1}} < x_{i_1+1}^{l_{i_1}} \leq \ldots$ ends with some $x_{i'}^{l_{i_1}}=1$, $i'< i_2$ and therefore 
	\begin{align*}
	l(\Sigma) \geq \sum_{i=i_1+1}^{i'} d_\infty(x_{i-1},x_{i}) \geq \sum_{i=i_1+1}^{i'} |x_{i}^{l_{i_1}}-x_{i-1}^{l_{i_1}}| = x_{i'}^{l_{i_1}}-x_{i_1}^{l_{i_1}}=1.
	\end{align*}
\end{proof}

\begin{cor}\label{cor:taut strings are long}
	Let $\mathcal{C}$ be a finite dimensional cube complex. Then there is some constant $\alpha$ such that for every taut $m$-string $\Sigma=(x_0, \ldots, x_m)$ we have $l(\Sigma) \geq \alpha m - 1$.
\end{cor}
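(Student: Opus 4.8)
The plan is to reduce to Lemma~\ref{lem:taut strings are long}. Fix $n := \dim\mathcal{C} < \infty$, so that every taut $(n+2)$-string has length at least $1$. Given a taut $m$-string $\Sigma = (x_0,\ldots,x_m)$, I would cut it into the $q := \lfloor m/(n+2)\rfloor$ consecutive blocks $\Sigma_j := (x_{j(n+2)}, x_{j(n+2)+1}, \ldots, x_{(j+1)(n+2)})$ for $j = 0,\ldots,q-1$, followed by a (possibly empty) leftover string. Since the cube $C_i$ assigned to a pair $(x_i,x_{i+1})$ does not depend on which string the pair belongs to, the lengths of these pieces add up to $l(\Sigma)$, so it suffices to bound $l(\Sigma_j)$ from below for each $j$; and for that it is enough to know that each $\Sigma_j$ is again taut.

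The step I expect to require the most care is precisely this claim, that a consecutive sub-string of a taut string is taut. Conditions (i) and (ii) of Definition~\ref{def:taut} are statements about triples of consecutive points $x_{i-1},x_i,x_{i+1}$ and the associated cubes $C_{i-1},C_i$ and face $F_i$, all of which are unchanged when passing to a sub-string, so these two conditions are inherited automatically. For condition (iii) I would first record that tautness of $\Sigma$ (condition (iii)) together with condition (i) of the coordinate setup yields $x_i^l \le x_{i+1}^l$ for all $l$ and all $i = 0,1,\ldots,m-1$ in the coordinates chosen for $\Sigma$. Consequently these coordinates, restricted to the cubes appearing in $\Sigma_j$, still satisfy all the requirements of a coordinate setup for $\Sigma_j$: monotonicity at the initial step holds because $x_{j(n+2)}^l \le x_{j(n+2)+1}^l$, while $(0,\ldots,0)\in F_i$ and the agreement of coordinates on the faces $F_i$ are just the corresponding properties for $\Sigma$. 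With respect to this admissible choice of coordinates, condition (iii) for $\Sigma_j$ is a sub-list of condition (iii) for $\Sigma$, hence holds, and $\Sigma_j$ is taut.

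Once this is in place, Lemma~\ref{lem:taut strings are long} gives $l(\Sigma_j) \ge 1$ for $j = 0,\ldots,q-1$, and since the leftover piece has non-negative length,
\[
	l(\Sigma) \;\ge\; \sum_{j=0}^{q-1} l(\Sigma_j) \;\ge\; q \;=\; \Big\lfloor \tfrac{m}{n+2} \Big\rfloor \;\ge\; \tfrac{m}{n+2} - 1 .
\]
Thus the corollary holds with $\alpha := \tfrac{1}{n+2} = \tfrac{1}{\dim\mathcal{C}+2}$; when $m \le n+1$ the inequality is vacuous because its right-hand side is negative. Apart from the coordinate bookkeeping in the middle paragraph, everything here is routine.
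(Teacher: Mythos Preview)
Your argument is correct and is exactly the routine deduction the paper leaves implicit (the corollary is stated without proof, as an immediate consequence of Lemma~\ref{lem:taut strings are long}). The only point requiring any care---that consecutive sub-strings of a taut string are again taut with the inherited coordinates---you handle properly, and the resulting bound $l(\Sigma)\ge \lfloor m/(n+2)\rfloor \ge m/(n+2)-1$ gives the claim with $\alpha = 1/(\dim\mathcal{C}+2)$.
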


\begin{lem}\label{lem:tauten minimal m-strings}
	For some fixed $m$, let $\Sigma=(x_0, \ldots , x_m)$ be an $m$-string from $x$ to $y$ of minimal length among all $m$-strings joining these points. Then there is some taut n-string $\bar\Sigma=(\bar x_0, \ldots , \bar x_m)$ from $x$ to $y$ with $l(\bar \Sigma ) = l(\Sigma)$ and $n\leq m$.
\end{lem}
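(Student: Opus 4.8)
\textbf{Proof proposal for Lemma~\ref{lem:tauten minimal m-strings}.}

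The plan is to start with a minimal-length $m$-string $\Sigma = (x_0, \ldots, x_m)$ and apply a finite sequence of modifications, each of which either leaves the number of segments unchanged or decreases it, never increases the length, and makes progress toward tautness. I would treat the three conditions of Definition~\ref{def:taut} in a natural order. First, if condition~(i) fails, i.e. some triple $\{x_{i-1},x_i,x_{i+1}\}$ lies in a common cube, then by the triangle inequality in that cube we may delete $x_i$ to obtain an $(m-1)$-string $\Sigma'$ with $l(\Sigma')\le l(\Sigma)$; pad $\Sigma'$ back to an $m$-string by repeating the last point (this keeps it an $m$-string from $x$ to $y$ of the same length), and note that the minimality hypothesis forces $l(\Sigma')=l(\Sigma)$ anyway. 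Second, if condition~(ii) fails at index $i$, then $x_i \notin I_{L_i}(x_{i-1},x_{i+1})$ in $L_i = C_{i-1}\cup C_i$: since $L_i$ with its length metric is itself a (geodesic) space built from two cubes, replace the pair of segments $x_{i-1}\to x_i\to x_{i+1}$ by a shortest path in $L_i$ from $x_{i-1}$ to $x_{i+1}$; such a geodesic crosses the common face $F_i$ in a single point $\bar x_i$, so we get a strictly shorter (or equal-length) $m$-string passing through a point of the interval. Again minimality makes the length equal.

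The third condition — monotonicity of the coordinates, $x_i^l \le x_{i+1}^l$ — is where the real work lies, and where I expect the main obstacle. After arranging (i) and (ii), I would, within each $L_i = C_{i-1}\cup C_i$, replace the subpath $x_{i-1}\to x_i\to x_{i+1}$ by a \emph{specific} geodesic of $L_i$ rather than an arbitrary one: namely the one whose coordinates are monotone along the path. Concretely, using the inductively chosen coordinates in which $(0,\ldots,0)\in F_i$ and $x_{i-1}^l\le x_i^l$ (for the initial cube) one wants to pick the geodesic in each cube that moves each coordinate monotonically from its value at the incoming endpoint to its value at the outgoing endpoint; since the $l_\infty$-metric on a cube admits monotone geodesics between any two comparable-orientation points, and geodesics through the face can be chosen so the coordinate that "turns on" at $F_i$ only increases, this is possible. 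The delicate point is that these local choices must be made \emph{consistently} across all the $L_i$ simultaneously, so that the coordinate systems introduced cube-by-cube (which depend on the string itself) remain compatible; one fixes this by performing the straightening in a single sweep $i=1,2,\ldots,m-1$, re-choosing coordinates on $C_i$ after each step so that property~(iii) holds up to index $i$, and checking that the earlier segments are not disturbed because the modification at step $i$ only affects points in $C_{i-1}\cup C_i$ and only changes $x_i$.

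Finally I would argue termination: each application of the (i)-move strictly decreases the segment count, so it can happen only finitely often; between such moves, the (ii)- and (iii)-moves decrease a suitable monotone quantity — for instance the number of sign changes in the coordinate sequences $(x_0^l,\ldots,x_m^l)$, or lexicographically the tuple of segment lengths — so after finitely many steps we reach a string $\bar\Sigma=(\bar x_0,\ldots,\bar x_m)$ satisfying all three conditions. By construction $l(\bar\Sigma)\le l(\Sigma)$, and since $\Sigma$ was of minimal length among $m$-strings joining $x$ and $y$ (and every intermediate object is still such an $m$-string, possibly with repeated points, hence still in the competition after truncating trivial segments), we get $l(\bar\Sigma)=l(\Sigma)$; the resulting string is a taut $n$-string with $n\le m$ after discarding any degenerate repeated points introduced by the (i)-moves. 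The main obstacle, to reiterate, is handling condition~(iii): ensuring the coordinate-monotone geodesic can always be selected within each $L_i$ in a way compatible with the face-gluing conventions (ii) above Definition~\ref{def:taut}, and that the straightening sweep genuinely terminates rather than cycling.
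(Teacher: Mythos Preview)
Your handling of conditions (i) and (ii) is fine and essentially matches the paper's: (i) is dealt with by deleting a point, and (ii) actually holds automatically for a length-minimising $m$-string (if it failed one could strictly shorten within $L_i$, contradicting minimality), so no replacement step is even needed there.

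The genuine gap is in your treatment of (iii), and it is not a termination issue but a feasibility one. Your sweep modifies only $x_i$ at step $i$, with $x_{i-1}$ already frozen and $x_m=y$ fixed throughout. But for a coordinate $l$ that lives in the face $F_i$, the requirement $x_{i-1}^l \le x_i^l \le x_{i+1}^l$ is simply impossible whenever $x_{i-1}^l > x_{i+1}^l$; no choice of $x_i \in F_i$, monotone geodesic or otherwise, can repair this. The obstruction propagates to the boundary: once you have built the chain $x_0^l \le x_1^l \le \cdots \le x_{m-1}^l$, nothing in your procedure forces $x_{m-1}^l \le x_m^l$, and $x_m=y$ cannot be moved. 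Re-choosing coordinates on $C_i$ does not help either, since for the directions parametrising $F_i$ the orientation is inherited from $C_{i-1}$ and ultimately from the constraint $x_0^l \le x_1^l$ on $C_0$.

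The paper's argument for (iii) is genuinely non-local. After arranging (i) and (ii), one may assume by induction on $m$ that (iii) already holds for $i \le m-2$. For each coordinate $l$ with $x_{m-1}^l > x_m^l$ one then \emph{clamps a whole tail of the string at once}: set $\bar x_i^l = \min\{x_i^l, x_m^l\}$ for every $i$ back to the beginning $i_0$ of the monotone run. If even $x_0^l > x_m^l$, one instead sets all intermediate $l$-coordinates equal to $x_0^l$ and reflects that coordinate axis at $\tfrac12$. The pointwise inequality $|\bar x_i^l - \bar x_{i+1}^l| \le |x_i^l - x_{i+1}^l|$ shows the string does not get longer. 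This simultaneous clamping of many points, together with the reflection trick for the endpoint case, is precisely the idea your one-point-at-a-time sweep is missing.
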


\begin{proof}
	We proceed by induction on $m$. A $1$-string clearly is taut. Therefore lets assume $m\geq 2$. If there are three consecutive points $x_{i-1},x_i,x_{i+1}$ all contained in some cube $C$ we can simply cancel $x_i$ and get an $(m-1)$-string $\bar \Sigma =(x_0, \ldots , x_{i-1},x_{i+1},\ldots, x_m)$ with the same length by the triangle inequality and minimality. Hence we may assume that property (i) of taut strings holds. If property (ii) does not hold we can easily find some $\bar{x}_i \in F_i$ with $d(x_{i-1},\bar x_i)+d(\bar x_i,x_{i+1}) < d(x_{i-1}, x_i)+d(x_i,x_{i+1})$ contradicting minimality of $\Sigma$. Thus it remains to achieve that $\bar x_{i-1}^l \leq \bar x_i^l \leq \bar x_{i+1}^l$. By induction we may assume that this is true for $i \leq m-2$. For each $l$ with $x_{m-1}^l > x_m^l$ consider the maximal chain $0 \leq x_{i_0}^l \leq \ldots \leq x_{m-1}^l$. If $x_{i_0}^l \leq x_m^l$ define $\bar x_i^l = \min \{ x_i^l, x_m^l \}$ for $i=i_0, \ldots, m-1$. Otherwise we have $i_0=0$, i.e. $x_{i_0}=x$. If so define $\bar x_i^l = x^l$ for $i=1, \ldots, m-1$ and reflect the corresponding coordinate axes at $\frac{1}{2}$. In either case we have $|\bar x_i^l - \bar x_{i+1}^l| \leq |x_i^l - x_{i+1}^l|$ and hence $\bar \Sigma$ is no longer than $\Sigma$. Observe that if $x_m^l=0$ it may happen that property (i) does not hold anymore for $\bar \Sigma$ but then we can proceed as described at the beginning of the proof.
\end{proof}

\begin{lem}\label{lem:shortest m-string in finite complex}
	Let $\mathcal{L}$ be a finite cube complex and $x,y \in |\mathcal{L}|$ two points that can be joined by an $m$-string for some fixed $m$. Then there is a shortest $m$-string from $x$ to $y$.
\end{lem}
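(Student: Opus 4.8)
The plan is to prove this by a compactness argument, reducing to the case of minimizing over a compact set. First I would observe that although $|\mathcal{L}|$ is compact (being a finite cube complex with finitely many isometry types, hence a compact metric space by Theorem~\ref{thm:complete length space} and finiteness), the set of $m$-strings is not obviously compact because the infimum could in principle be approached by $m$-strings whose intermediate points do not converge, or whose length is not lower-semicontinuous. So the real content is to set up the right parametrization and show lower semicontinuity of the length functional.

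The key steps, in order, are as follows. First, fix $m$ and consider the set $S_{x,y}^m \subset |\mathcal{L}|^{m+1}$ of all tuples $(x_0,\ldots,x_m)$ with $x_0 = x$, $x_m = y$, and such that each consecutive pair $x_i,x_{i+1}$ lies in a common cube. Second, note that "lying in a common cube" is a closed condition: if $x_i^{(k)} \to x_i$ and $x_{i+1}^{(k)} \to x_{i+1}$ with each pair in some cube $C^{(k)}$, then since there are only finitely many cubes, some cube $C$ occurs infinitely often, and since $C$ is closed the limit pair lies in $C$. Hence $S_{x,y}^m$ is a closed subset of the compact space $|\mathcal{L}|^{m+1}$, so it is compact. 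Third, I would show that the length functional $l(\Sigma) = \sum_{i=0}^{m-1} d^{C_i}(x_i,x_{i+1})$ — where one must be a little careful since $C_i$ is "a" cube containing the pair, not canonically determined — can be replaced by the well-defined functional $\ell(\Sigma) = \sum_{i=0}^{m-1} \delta(x_i,x_{i+1})$, where $\delta(a,b)$ denotes the infimum of $d^C(a,b)$ over all cubes $C$ containing both $a$ and $b$ (equivalently the minimum, since there are finitely many such cubes and the smallest cube containing both realizes it). This $\ell$ agrees with the relevant notion of length and $\delta$ is continuous on its (closed) domain; hence $\ell$ is continuous on $S_{x,y}^m$. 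Finally, a continuous function on a nonempty compact set attains its infimum, which gives the desired shortest $m$-string.

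The main obstacle I expect is the bookkeeping around the choice of cube $C_i$: the definition of $l(\Sigma)$ given in the preliminaries uses $d^{C_i}(x_i,x_{i+1})$ where $C_i$ is "some cube containing $x_i, x_{i+1}$", and for the distance $d$ the infimum over strings is implicitly also an infimum over such choices. One needs to argue that along a sequence of $m$-strings the natural choice (say, the smallest cube, or equivalently the unique minimal face containing the pair) behaves well under limits, or simply bypass this by defining $\delta$ as above and checking it is continuous — continuity of $\delta$ follows because on a fixed cube the metric $d^C$ is continuous, the smallest cube containing a pair of points is locally constant away from lower-dimensional faces and only drops in dimension in the limit (which can only decrease or preserve $\delta$ by condition~(\ref{eq: good norms}) of Theorem~\ref{thm:geodesic cube complexes}, though here any of the $l_p$-metrics suffices via the restriction property), so $\delta$ is at worst lower-semicontinuous, which is exactly what is needed to conclude the infimum is attained. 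A clean way to finish is: take a minimizing sequence $\Sigma^{(k)} \in S_{x,y}^m$, pass to a convergent subsequence $\Sigma^{(k)} \to \bar\Sigma \in S_{x,y}^m$, and use lower semicontinuity of $\ell$ to get $\ell(\bar\Sigma) \le \liminf \ell(\Sigma^{(k)}) = \inf$, so $\bar\Sigma$ is a shortest $m$-string.
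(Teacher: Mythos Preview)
Your proposal is correct and follows essentially the same approach as the paper, which simply says ``Adapt the proof of Lemma I.7.25 in \cite{bridson}'' --- that referenced proof is precisely the compactness argument you have spelled out: finitely many cubes make the space of $m$-strings with fixed endpoints a compact subset of $|\mathcal{L}|^{m+1}$, and the length functional is (lower semi)continuous there, so its infimum is attained. If anything, you have done more work than the paper by writing the argument out; the only simplification you could make is to note that for the $l_p$-metrics under consideration the face metrics are restrictions of the cube metrics, so $d^{C}(x_i,x_{i+1})$ is independent of the choice of $C$ and your $\delta$ is simply continuous, avoiding the lower-semicontinuity discussion altogether.
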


\begin{proof}
	Adapt the proof of Lemma I.7.25 in \cite{bridson}.
\end{proof}

\begin{lem}\label{lem:shortst m-string in finite dimensional complex}
	Let $\mathcal{C}$ be a finite dimensional cube complex and $x,y\in |\mathcal{C}|$ two points that can be joined by an $m$-string for some fixed $m$. Then there is a shortest $m$-string joining them.
\end{lem}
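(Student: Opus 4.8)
The plan is to realize every $m$-string from $x$ to $y$ inside one of only finitely many isometry types of pointed finite subcomplexes of $\mathcal{C}$, and then to apply Lemma~\ref{lem:shortest m-string in finite complex} to each of them. Put $D:=\inf\{l(\Sigma):\Sigma\text{ an }m\text{-string from }x\text{ to }y\}$, which is finite by hypothesis. To an $m$-string $\Sigma=(x_0,\dots,x_m)$, after choosing cubes $C_i$ with $x_i,x_{i+1}\in C_i$, I associate the finite subcomplex $\mathcal{L}_\Sigma:=C_0\cup\dots\cup C_{m-1}$ generated by these cubes, pointed by $x$ and $y$. Because the cube metrics on $\mathcal{L}_\Sigma$ are restrictions of those on $|\mathcal{C}|$, the number $l(\Sigma)$ is independent of the choice of the $C_i$, and the set of all lengths of $m$-strings from $x$ to $y$ contained in $\mathcal{L}_\Sigma$ depends only on the isometry type of the pointed complex $(\mathcal{L}_\Sigma,x,y)$. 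Note also that $x_i\in C_{i-1}\cap C_i$ for $1\le i\le m-1$, that the smallest cube $C_x$ containing $x$ is a face of $C_0$ with $x$ in its relative interior, and similarly that $C_y$, the smallest cube containing $y$, is a face of $C_{m-1}$.

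The crucial point is that only finitely many isometry types of $(\mathcal{L}_\Sigma,x,y)$ arise, and this is exactly where finite dimensionality enters: every cube of $\mathcal{C}$ has dimension at most $n$, hence there are at most $n+1$ isometry types of cubes and at most $3^n$ faces per cube. One assembles the tuple $(C_0,\dots,C_{m-1})$ one cube at a time. The cube $C_0$ is determined up to isometry by its dimension and by which of its faces is $C_x$ (finitely many choices, with $x$ at a fixed relative-interior point of $C_x$); having fixed $C_0,\dots,C_{i-1}$ and all their mutual intersections up to isometry, the new cube $C_i$ adds only the finite data consisting of $\dim C_i$ and, for each $j<i$, of which face of $C_i$ and which face of $C_j$ the common face $C_i\cap C_j$ is (the degenerate case $C_i=C_j$ allowed); finally $y$ contributes the choice of which face of $C_{m-1}$ is $C_y$. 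Since there are exactly $m$ cubes, this leaves only finitely many combinatorial possibilities, hence finitely many isometry types.

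Let now $(\mathcal{L}_1,u_1,v_1),\dots,(\mathcal{L}_r,u_r,v_r)$ be representatives of the finitely many types that occur; each $\mathcal{L}_t$ is a finite cube complex, so by Lemma~\ref{lem:shortest m-string in finite complex} it contains a shortest $m$-string from $u_t$ to $v_t$, of length $\ell_t$ say. Every $m$-string from $x$ to $y$ in $\mathcal{C}$ belongs to some type $t$, hence corresponds through an isometry $\mathcal{L}_t\cong\mathcal{L}_\Sigma\subseteq\mathcal{C}$ to an $m$-string from $u_t$ to $v_t$ in $\mathcal{L}_t$, so its length is at least $\ell_t\ge\min_t\ell_t$; conversely, pushing the shortest $m$-string of an occurring $\mathcal{L}_t$ forward along such an isometry gives an $m$-string from $x$ to $y$ in $\mathcal{C}$ of length exactly $\ell_t$. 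Therefore $D=\min_t\ell_t$, this minimum is attained, and an $m$-string realizing it is a shortest $m$-string from $x$ to $y$.

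The step I expect to cost the most care is the finiteness claim above: one must verify that the isometry type of the generated subcomplex $\mathcal{L}_\Sigma$ --- with all the incidences $C_i\cap C_j$, possible coincidences $C_i=C_j$, and the identifications furnished by the cube-complex axioms --- is genuinely determined by a bounded amount of data, and that $l(\Sigma)$ together with the marked points are invariants of it. By contrast, tautness and Corollary~\ref{cor:taut strings are long} are not needed here, precisely because all competing strings already have the same, bounded number $m$ of steps; everything after the finiteness claim is a routine appeal to Lemma~\ref{lem:shortest m-string in finite complex} together with a minimum over finitely many values.
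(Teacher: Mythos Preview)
Your proposal is correct and takes essentially the same approach as the paper: reduce to finitely many isometry types of finite subcomplexes generated by $m$ cubes, invoke Lemma~\ref{lem:shortest m-string in finite complex} on each, and take the minimum. The paper's own proof is a terse four-sentence sketch of exactly this argument; your elaboration of the pointed structure $(\mathcal{L}_\Sigma,x,y)$ and of the finiteness count makes explicit what the paper leaves to the reader.
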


\begin{proof}
	Since the dimension of the cubes is bounded, there are only finitely many types of cubes in $\mathcal{C}$. Hence there are also only finitely many isometry types of subcomplexes consisting of $m$ cubes. Now every $m$-string is contained in such a subcomplex and for every type of subcomplex there is some smallest $m$-string joining $x$ and $y$ by the previous lemma. Therefore we can find a smallest $m$-string between $x$ and $y$ in $|\mathcal{C}|$.
\end{proof}

\begin{prop}\label{prop:metric by taut stings}
	Let $\mathcal{C}$ be a finite dimensional cube complex. Then for any two points $x$ and $y$ in $|\mathcal{C}|$  we have
	\begin{equation}
		d_\infty(x,y)=\inf \{l(\Sigma) : \Sigma \text{ is a taut string from $x$ to $y$} \}.
	\end{equation}		
\end{prop}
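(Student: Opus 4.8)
The plan is to prove the two inequalities separately. Since every taut string is in particular a string, the definition of $d_\infty$ immediately gives $d_\infty(x,y) \leq \inf\{l(\Sigma) : \Sigma \text{ is a taut string from } x \text{ to } y\}$, so the only content is the reverse estimate, namely that every string can be replaced by a taut string that is no longer.

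For this I would fix $\varepsilon > 0$ and, by the very definition of $d_\infty$, choose some string $\Sigma$ from $x$ to $y$ with $l(\Sigma) < d_\infty(x,y) + \varepsilon$; say $\Sigma$ is an $m$-string. Since $\mathcal{C}$ is finite dimensional, Lemma~\ref{lem:shortst m-string in finite dimensional complex} provides a shortest $m$-string $\Sigma'$ from $x$ to $y$, and in particular $l(\Sigma') \leq l(\Sigma)$. Being of minimal length among all $m$-strings joining $x$ and $y$, the string $\Sigma'$ satisfies the hypothesis of Lemma~\ref{lem:tauten minimal m-strings}, which yields a taut $n$-string $\bar\Sigma$ (with $n \leq m$) from $x$ to $y$ with $l(\bar\Sigma) = l(\Sigma') \leq l(\Sigma) < d_\infty(x,y) + \varepsilon$. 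Hence the infimum over taut strings is $< d_\infty(x,y) + \varepsilon$, and letting $\varepsilon \to 0$ closes the argument.

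I do not expect a serious obstacle: the proposition is essentially an assembly of the preceding lemmas, the genuine work — tautening a minimal string while keeping the coordinates monotone, and the existence of shortest $m$-strings in a finite dimensional complex — having already been done in Lemmas~\ref{lem:tauten minimal m-strings} and~\ref{lem:shortst m-string in finite dimensional complex}. The one point worth flagging is that the combinatorial length $m$ of the approximating string is not controlled a priori; but this is irrelevant for the statement as phrased, since we only need a single taut string below each threshold $d_\infty(x,y) + \varepsilon$ and may take $m$ to be whatever the chosen approximating string happens to have. The uniform bound on $m$ for taut strings of bounded length coming from Corollary~\ref{cor:taut strings are long} is what will be needed \emph{afterwards}, to show that the infimum is actually attained and hence that $(|\mathcal{C}|,d_\infty)$ is geodesic, but it plays no role in the present proposition.
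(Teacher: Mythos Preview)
Your proof is correct and follows exactly the paper's approach: apply Lemma~\ref{lem:shortst m-string in finite dimensional complex} to an arbitrary $m$-string to obtain a minimal one, then Lemma~\ref{lem:tauten minimal m-strings} to tauten it without increasing length. The paper's proof is simply a one-sentence compression of your argument, and your remark that Corollary~\ref{cor:taut strings are long} is reserved for the subsequent geodesicity theorem is spot on.
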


\begin{proof}
	By Lemma~\ref{lem:shortst m-string in finite dimensional complex} and Lemma~\ref{lem:tauten minimal m-strings} for every $m$-string $\Sigma$ there is some taut $n$-string $\bar \Sigma$ with the same endpoints and $l(\bar \Sigma) \leq l(\Sigma)$.
\end{proof}

\begin{thm}\label{thm:finite dimensional cube complexes are geodesic}
	Let $\mathcal{C}$ be a finite dimensional cube complex. Then $(|\mathcal{C}|,d_\infty)$ is a complete geodesic space.
\end{thm}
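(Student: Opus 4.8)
The plan is to use the preceding lemmas to realize $d_\infty(x,y)$ by a single $m$-string for each pair of points $x,y$, and then to convert that string into an honest geodesic segment.

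First I would settle completeness. Since $\mathcal{C}$ is finite dimensional, every cube is isometric to one of $[0,1]^0,[0,1]^1,\ldots,[0,1]^{\dim\mathcal{C}}$ with the $l_\infty$-metric, so $\mathcal{C}$ has only finitely many isometry types of cubes and Theorem~\ref{thm:complete length space} gives that $(|\mathcal{C}|,d_\infty)$ is a complete length space. It then remains to show that any two points that can be joined by a string are joined by a geodesic.

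Second, I would show that $d_\infty(x,y)$ is attained by a string. Put $D=d_\infty(x,y)$ and let $\alpha>0$ be the constant from Corollary~\ref{cor:taut strings are long}. By Proposition~\ref{prop:metric by taut stings}, $D$ equals the infimum of $l(\Sigma)$ over taut strings $\Sigma$ from $x$ to $y$; only those with $l(\Sigma)<D+1$ are relevant, and such a taut $m$-string satisfies $\alpha m-1\le l(\Sigma)<D+1$, i.e.\ $m<M:=(D+2)/\alpha$. Writing $D_m$ for the infimum of the lengths of all $m$-strings from $x$ to $y$, it follows that $D=\min_{m<M}D_m$, a minimum over finitely many indices, and each finite $D_m$ is attained by Lemma~\ref{lem:shortst m-string in finite dimensional complex}. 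Hence some shortest $m_0$-string $\Sigma^*=(x_0,\ldots,x_{m_0})$ has $l(\Sigma^*)=D$.

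Third, I would build the geodesic from $\Sigma^*$. For each $i$ pick a cube $C_i\ni x_i,x_{i+1}$; since $(C_i,d_\infty^{C_i})$ is a geodesic space (a convex subset of $(\mathbb{R}^{\dim C_i},\|\cdot\|_\infty)$), choose a geodesic segment $\gamma_i$ in $C_i$ from $x_i$ to $x_{i+1}$ and let $\gamma$ be the concatenation of $\gamma_0,\ldots,\gamma_{m_0-1}$, a path from $x$ to $y$ in $|\mathcal{C}|$. The $1$-string $(p,q)$ through $C_i$ shows $d_\infty(p,q)\le d_\infty^{C_i}(p,q)$ for $p,q\in C_i$, so the $d_\infty$-length of $\gamma_i$ is at most $d_\infty^{C_i}(x_i,x_{i+1})$, whence the $d_\infty$-length of $\gamma$ is at most $\sum_i d_\infty^{C_i}(x_i,x_{i+1})=l(\Sigma^*)=D$; it is also at least $D$ since $\gamma$ joins $x$ to $y$, so it equals $D$. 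Parametrizing $\gamma$ by arc length on $[0,D]$, for $0\le s\le t\le D$ we get $d_\infty(\gamma(s),\gamma(t))\le t-s$, and then $D\le d_\infty(\gamma(0),\gamma(s))+d_\infty(\gamma(s),\gamma(t))+d_\infty(\gamma(t),\gamma(D))\le s+(t-s)+(D-t)=D$ forces $d_\infty(\gamma(s),\gamma(t))=t-s$; hence $\gamma$ is a geodesic from $x$ to $y$.

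I expect the second step to be the crux: one needs a uniform bound on the number of cubes used by near-minimal strings in order to reduce an infimum over all strings to a minimum over finitely many lengths of $m$-strings, which is precisely what the monotone-coordinates argument behind Corollary~\ref{cor:taut strings are long} supplies. A minor but essential point in the third step is that a single cube need not be convex in $|\mathcal{C}|$, so only $d_\infty\le d_\infty^{C_i}$ holds on $C_i$ — but that is exactly the inequality needed to bound the length of $\gamma$.
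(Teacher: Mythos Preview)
Your proof is correct and follows essentially the same route as the paper's: bound the number of cubes in near-minimal strings via Corollary~\ref{cor:taut strings are long}, realize the distance by a shortest $m$-string using Lemma~\ref{lem:shortst m-string in finite dimensional complex}, and concatenate straight-line segments in the cubes. Your write-up is in fact more careful than the paper's at the last step, where the paper simply says ``connecting the $x_i$'s by straight lines, we finally get a geodesic'' without spelling out the $d_\infty \leq d_\infty^{C_i}$ inequality or the arc-length verification.
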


\begin{proof}
	Let $x,y$ be two points in $|\mathcal{C}|$ with distance $d:=d_\infty(x,y)$. Corollary~\ref{cor:taut strings are long} then tells us that there is some $m_0$ such that we can approximate $d_\infty(x,y)$ only using $m$-stings with $m \leq m_0$. But then by Lemma~\ref{lem:shortst m-string in finite dimensional complex} there is some shortest string $\Sigma=(x_0. \ldots,x_m)$ joining $x$ and $y$, i.e. $d_\infty(x,y) = \sum_{i=1}^n d_\infty(x_{i-1},x_i)$. Connecting the $x_i$'s by straight lines, we finally get a geodesic from $x$ to $y$.
\end{proof}

\begin{proof}[Proof of Theorem~\ref{thm:geodesic cube complexes}.]
	If we reread the proof we see that it also works for metrics induced by some other norms on the cubes. The crucial points are that
	\begin{enumerate}[(i)]
	\item there are minimal $m$-strings among all $m$-strings (Lemmas~\ref{lem:shortst m-string in finite dimensional complex}),
	\item we can approximate minimal $m$-strings by taut strings (Lemma~\ref{lem:tauten minimal m-strings}) and
	\item taut $m$-strings are long (Lemma~\ref{lem:taut strings are long}).
	\end{enumerate}
	
	Requirement (i) is given as long as there are only finitely many isometry types of cubes, i.e. only finitely many different norms occurring. For (ii) we need that the norms fulfill 
	\begin{equation*}
		\|\bar x \| \leq \|x \| \text{ if } |\bar x^l | \leq |x^l| \text{ for all } l=1,\ldots, n.
	\end{equation*} 
	This is used in the last step of the proof. Property (iii) easily follows assuming (i) since norms on finite dimensional vector spaces are equivalent.
\end{proof}


\section{Injective Cube Complexes are CAT(0)}\label{sec:injective}


\begin{definition}
	A metric space $(X,d)$ is \emph{($m$-)hyperconvex} if for any collection $\{ \bar{B}(x_i,r_i)\}_{i\in I}$ of closed balls with $d(x_i,x_j) \leq r_i + r_j$ (and $|I| \leq m$) we have 
	\begin{equation*}
		\bigcap_{i\in I} \bar{B}(x_i,r_i) \neq \emptyset.
	\end{equation*}		
\end{definition}

This terminology was introduced by Aronszajn and Panitchpakdi \cite{aronszajn} who proved the following result.

\begin{prop}
	A metric space is injective if and only if it is hyperconvex.
\end{prop}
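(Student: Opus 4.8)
The plan is to prove the two implications separately, following Aronszajn and Panitchpakdi \cite{aronszajn}. \emph{Injective implies hyperconvex.} Suppose $(X,d)$ is injective and let $\{\bar{B}(x_i,r_i)\}_{i\in I}$ be a family of closed balls with $d(x_i,x_j)\le r_i+r_j$ for all $i,j$. The idea is to attach a new point $p$ to $A:=\{x_i:i\in I\}\subseteq X$ in such a way that $p$ lies within distance $r_i$ of each $x_i$, and then extend the isometric inclusion $A\hookrightarrow X$ over this enlargement. The subtlety is that simply declaring $d(p,x_i)=r_i$ need not yield a metric space, since the $r_i$ satisfy only the ``sum'' inequality and not necessarily $|r_i-r_j|\le d(x_i,x_j)$. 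To repair this I would instead use
\[
\hat r_i=\inf\Bigl\{\, r_{i_0}+\textstyle\sum_{t=0}^{k-1}d(x_{i_t},x_{i_{t+1}})\ :\ k\ge0,\ i_0,\ldots,i_k\in I,\ i_k=i \,\Bigr\},
\]
which is the largest $1$-lipschitz function on $A$ bounded above by $x_i\mapsto r_i$; the trivial chain gives $\hat r_i\le r_i$, and clearly $\hat r_i\ge0$. By construction $|\hat r_i-\hat r_j|\le d(x_i,x_j)$, and concatenating two near-optimal chains and invoking the hypothesis $r_{i_0}+r_{j_0}\ge d(x_{i_0},x_{j_0})$ at their starting points yields $\hat r_i+\hat r_j\ge d(x_i,x_j)$. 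Hence $Y=A\sqcup\{p\}$ with $d|_A$ unchanged and $d(p,x_i)=\hat r_i$ is a metric space (if some $\hat r_i=0$ one identifies $p$ with $x_i$, and then that $x_i$ already lies in every ball, so there is nothing to prove) in which $A$ sits isometrically. Extending $A\hookrightarrow X$ to a $1$-lipschitz map $\bar f\colon Y\to X$ and setting $z:=\bar f(p)$ gives $d(z,x_i)\le d(p,x_i)=\hat r_i\le r_i$ for every $i$, so $z\in\bigcap_{i\in I}\bar{B}(x_i,r_i)$.

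\emph{Hyperconvex implies injective.} Assume $(X,d)$ is hyperconvex, let $\iota\colon A\hookrightarrow Y$ be an isometric embedding and $f\colon A\to X$ a $1$-lipschitz map, and identify $A$ with $\iota(A)\subseteq Y$. The core step is a one-point extension: given a set $B$ with $A\subseteq B\subseteq Y$, a $1$-lipschitz $g\colon B\to X$ with $g|_A=f$, and a point $y\in Y\setminus B$, the balls $\bar{B}(g(b),d(b,y))$ for $b\in B$ satisfy $d(g(b),g(b'))\le d(b,b')\le d(b,y)+d(y,b')$, so by hyperconvexity their intersection contains some point $z$; setting $g(y):=z$ then extends $g$ to a $1$-lipschitz map on $B\cup\{y\}$. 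Now run Zorn's lemma on the poset of pairs $(B,g)$ of this form, ordered by extension: it is nonempty, and the union along a chain of such pairs is again of this form (the $1$-lipschitz inequality is tested on a pair of points, and any two points of the union lie in a common member of the chain), so there is a maximal element $(B,g)$; the one-point extension forces $B=Y$, and $\bar f:=g$ is the desired extension.

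The only real obstacle I anticipate is the bookkeeping in the first implication: ensuring that adjoining $p$ gives an honest metric space even though the prescribed radii satisfy just the one-sided inequality $d(x_i,x_j)\le r_i+r_j$ — this is exactly what the passage to the chain infimum $\hat r_i$ accomplishes. The converse is then a routine transfinite induction once the one-point extension step is in hand.
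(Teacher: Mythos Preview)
Your argument is correct in both directions. The chain-infimum $\hat r_i$ is exactly the right repair for the missing inequality $|r_i-r_j|\le d(x_i,x_j)$, and your verification that $\hat r_i+\hat r_j\ge d(x_i,x_j)$ via concatenation of near-optimal chains goes through as stated. The Zorn argument in the converse is the standard one and is fine.

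As for the comparison: the paper does not actually prove this proposition. It is stated immediately after the definition of hyperconvexity with the attribution ``This terminology was introduced by Aronszajn and Panitchpakdi \cite{aronszajn} who proved the following result,'' and no proof is given. So there is nothing in the paper to compare your argument against; you have supplied a complete proof where the paper only cites one.
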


\begin{lem}\label{lem1}
	Let $\mathcal{C}$ be a cube complex endowed with the maximum metric $d_\infty$ such that $(|\mathcal{C}|,d_\infty)$ is geodesic. Assume that there are cubes $C,C_0 \in \mathcal{C}$ with $C \subset C_0$, a point $p \in C_0$ and $r \in [0, \tfrac12]$ with $d_\infty(p,\partial C) \geq r$ and $d_\infty(p,C) \leq r$. Then for $q \in \bar{B}(p,r)$ there is some cube $D$ containing $q$ and $C$.
\end{lem}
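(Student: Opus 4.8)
The plan is to recast the conclusion as an inclusion of subcomplexes. Put $N(C) = \bigcup\{D \in \mathcal{C} : C \subseteq D\}$, the union of all cubes having $C$ as a face; then ``there is a cube $D$ containing $q$ and $C$'' says precisely $q \in N(C)$, so the assertion is $\bar B(p,r) \subseteq N(C)$. Since $N(C)$ is a closed subcomplex and $(|\mathcal{C}|,d_\infty)$ is geodesic, it suffices to prove $d_\infty\big(p,\operatorname{Fr}(N(C))\big) \ge r$, where $\operatorname{Fr}$ denotes the topological frontier in $|\mathcal{C}|$: indeed this forces $p \in \operatorname{int}N(C)$, and a minimizing geodesic from $p$ to any $q$ with $d_\infty(p,q) \le r$ then cannot leave $N(C)$, since to do so it would have to meet $\operatorname{Fr}(N(C))$ at a parameter $\le r$, and at worst it reaches $\operatorname{Fr}(N(C)) \subseteq N(C)$ at its endpoint.

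To estimate this frontier distance I would use the product structure of $N(C)$. Because $C$ is a face of each cube $D \supseteq C$, one can write $D = C \times D'$ with $C$ corresponding to $C \times \{o_D\}$ for a vertex $o_D$ of the complementary cube $D'$; and since any two such cubes meet in a face still containing $C$ (as $C \subseteq D_1 \cap D_2$), these splittings are compatible and assemble to an isometric product decomposition $N(C) \cong |C| \times |\mathcal{L}|$ of cube complexes, under which $C$ becomes $|C| \times \{o\}$ for a single vertex $o$ of $\mathcal{L}$ and, because the $\ell_\infty$-metric of a cube is the maximum of the factor metrics, $d_\infty\big|_{N(C)} = \max\big(d_\infty^{C},d_\infty^{\mathcal L}\big)$.

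Writing $p = (p_C,p')$ in these coordinates, we have $d_\infty(p,C) = d_\infty^{\mathcal L}(p',o)$ and $d_\infty(p,\partial C) = \max\big(d_\infty^{C}(p_C,\partial C),\,d_\infty(p,C)\big)$, so the hypotheses give $d_\infty^{\mathcal L}(p',o) \le r \le \tfrac12$ and (assuming the effective inequality $d_\infty(p,C) < r$; the borderline case $d_\infty(p,C)=r$ has to be handled by hand) $d_\infty^{C}(p_C,\partial C) \ge r$. Now let $z=(z_C,z_{\mathcal L}) \in \operatorname{Fr}(N(C))$. Then $z$ lies in some cube $E$ of $|\mathcal{C}|$ with $C \not\subseteq E$, and inspecting the carrier of $z$ (a product of a face of $|C|$ with a cube $\ell$ of $\mathcal{L}$) shows that either $z_C \in \partial C$ or $o \notin \ell$. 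In the first case $d_\infty(p,z) \ge d_\infty^{C}(p_C,z_C) \ge d_\infty^{C}(p_C,\partial C) \ge r$. In the second case the elementary cube-complex fact that a vertex not contained in a cube is at $d_\infty$-distance at least $1$ from it gives $d_\infty^{\mathcal L}(o,\ell) \ge 1$, hence $d_\infty(p,z) \ge d_\infty^{\mathcal L}(p',z_{\mathcal L}) \ge d_\infty^{\mathcal L}(o,\ell) - d_\infty^{\mathcal L}(o,p') \ge 1 - \tfrac12 \ge r$; this is the step that uses $r \le \tfrac12$. Either way $d_\infty(p,z) \ge r$, which completes the argument.

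The step I expect to be the main obstacle is setting up the product decomposition $N(C) \cong |C|\times|\mathcal{L}|$ cleanly and, with it, locating $\operatorname{Fr}(N(C))$ precisely enough to run the two cases; this rests on the (elementary but slightly delicate) estimate for distances from vertices to cubes, and on disposing of the borderline case $d_\infty(p,C)=r$.
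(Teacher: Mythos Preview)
Your plan is sound in outline but differs from the paper's route and has one gap you did not flag. The paper never sets up a global product $N(C)\cong |C|\times|\mathcal L|$; instead it takes a taut string $\Sigma=(p_0,\dots,p_k)$ from $p$ to $q$ (these exist by Section~\ref{sec:geodesic}) and proves by induction on $i$ that every maximal cube $C_i$ carrying consecutive points of $\Sigma$ contains $C$, by tracking the coordinates of $p_i$ in $C_i$ with $C$ placed as $[0,1]^n\times\{0\}^{n_i-n}$. The two hypotheses become the initial bounds $r\le x_l\le 1-r$ for $l\le n$ and $x_l\le r$ for $l>n$, and one propagates $r-d_\infty(p,p_i)\le x_l\le 1-r+d_\infty(p,p_i)$ (for $l\le n$) and $x_l\le r+d_\infty(p,p_i)$ (for $l>n$) along the string; since $d_\infty(p,p_i)<r$ for $i<k$, the coordinates never reach a face of $C_i$ that fails to contain $C$, so $C\subset C_i\cap C_{i+1}\subset C_{i+1}$. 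This is exactly your ``geodesic cannot leave $N(C)$'' idea, but carried out cube by cube without any global product structure.

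The gap in your version is the inequality $d_\infty(p,z)\ge d_\infty^{C}(p_C,z_C)$ (and the analogous one in the $\mathcal L$-factor). The projections onto the factors are $1$-Lipschitz only for the \emph{intrinsic} metric on $N(C)$; the ambient $d_\infty$ on $|\mathcal C|$ can be strictly smaller if there are shortcuts through cubes not containing $C$, so for an arbitrary frontier point $z$ these inequalities are unjustified. The repair is to argue only for the \emph{first} frontier point met by a minimizing geodesic from $p$: up to that point the geodesic has stayed in $N(C)$, so the ambient and intrinsic distances to $p$ agree and your product estimates apply. Once you make this restriction your approach and the paper's collapse into one another---you are following a geodesic and checking, cube by cube, that it does not exit $N(C)$---and the paper's coordinate computation is precisely the local verification that neither factor has moved far enough to lose $C$. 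In particular your ``vertex--to--cube distance $\ge 1$ in $\mathcal L$'' step gets replaced by the softer observation that if $x_l\neq 1$ for $l>n$ then the exit face still contains $C$; this also absorbs your borderline case without a separate argument.
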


\begin{proof}
	Let $\Sigma=(p_0 , p_1 , \ldots , p_k)$ be a taut string from $p$ to $q$ and $C_0, C_1, \ldots , C_k$ maximal cubes with $p_i,p_{i+1} \in C_i$. Observe that for $i\neq 0,k$ we have $p_i \in \partial C_i$.
	
	We denote $n= \dim C$, $n_i= \dim C_i$ and identify $C$ with $[0,1]^n \times \{0\}^{n_i-n} \subset [0,1]^{n_i}=C_i$ if $C \subset C_i$. By our assumption $p=(x_1, \ldots, x_{n_0})_{C_0}$ with $r \leq x_l \leq 1-r$ for $l=1, \ldots, n$ and $x_l \leq r$ for $l=n+1, \ldots, n_0$.
	\begin{cl}
		For $i=0,\ldots, k$ we have $C \subset C_i$ and $p_i=(x_1, \ldots, x_{n_i})_{C_i}$ with $r-d_\infty(p,p_i) \leq x_l \leq 1-r+d_\infty(p,p_i)$ for $l=1,\ldots, n$ and $x_l \leq r + d_\infty(p,p_i)$ for $l=n+1, \ldots n_i$.
	\end{cl} 
For $i=0$ this clearly is true. For $p_{i+1}=(y_1, \ldots , y_{n_{i}})_{C_{i}}$ we can easily check the inequalities using $|x_l-y_l | \leq d_\infty(p_i,p_{i+1})$ and $d_\infty(p,p_{i+1})=d_\infty(p,p_i)+d_\infty(p_i,p_{i+1})$. Since $p_{i+1} \neq q$ we have $d_\infty(p,p_{i+1})=d_\infty(p,q)-d_\infty(q,p_{i+1}) < r$ and therefore $y_l \neq 0,1$ for $l=1, \ldots , n$ and $y_l \neq 1$ for $l=n+1, \ldots , n_i$. Thus the subcube $D_{i+1}$ of $C_i\cap C_{i+1}$ with $p_{i+1} \in \operatorname{Int}{C_{i+1}'}$ also contains $C$.
\end{proof}

\begin{thm}
	Let $\mathcal{C}$ be a cubical complex such that $(|\mathcal{C}|,d_\infty)$ is geodesic, simply connected and locally $3$-hyperconvex. Then $(|\mathcal{C}|,d_2)$ is CAT(0).
\end{thm}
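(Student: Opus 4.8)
I would verify the combinatorial characterization of CAT(0) cube complexes recalled in Section~2. Simple connectedness of $(|\mathcal{C}|,d_2)$ is part of the hypothesis (it equals simple connectedness of $(|\mathcal{C}|,d_\infty)$), so the only thing to establish is the link-type condition: whenever three $(n+2)$-cubes $C_1,C_2,C_3$ pairwise intersect in $(n+1)$-faces $F_{ij}:=C_i\cap C_j$ and all three share a common $n$-cube $C$, there is an $(n+3)$-cube containing all of them. I would first reduce this to producing a single cube $D$ of $\mathcal{C}$ with $C_1\cup C_2\cup C_3\subseteq D$. Indeed, if such a $D$ exists then each $C_i$ and each $F_{ij}$ is a face of $D$; writing $\delta_{ij}$ for the unique coordinate direction of $D$ that $F_{ij}$ contributes beyond the $n$ directions of $C$, the $\delta_{ij}$ are pairwise distinct (equality of two of them would force the corresponding $(n+1)$-faces to coincide, so that two of the $C_i$ would meet in an $(n+1)$-cube rather than in $C$), and $C_i=F_{ij}\vee F_{ik}$; hence the smallest face of $D$ containing $C_1\cup C_2\cup C_3$ is the face spanned by the $n$ directions of $C$ together with $\delta_{12},\delta_{13},\delta_{23}$, which is exactly $(n+3)$-dimensional.

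The cube $D$ is produced using Lemma~\ref{lem1} together with local $3$-hyperconvexity. Let $b_{ij}$ be the barycenter of $F_{ij}$. Working inside $C_i$, which contains both $b_{ij}$ and $b_{ik}$, one computes $d_\infty(b_{ij},b_{ik})=\tfrac12$, so the three pairwise distances among $b_{12},b_{13},b_{23}$ are all at most $\tfrac12$, hence strictly less than $2\cdot\tfrac38$. Consequently the three closed balls $\bar{B}(b_{ij},\tfrac38)$ satisfy the pairwise hyperconvexity condition with slack; since the whole configuration lies in a region of bounded diameter, local $3$-hyperconvexity yields a point $w$ lying even in the slightly smaller balls $\bar{B}(b_{ij},\tfrac38-\varepsilon)$ for some $\varepsilon>0$, so $d_\infty(b_{ij},w)<\tfrac38$ for every pair. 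Now apply Lemma~\ref{lem1} three times, once for each face, taking $C_0=C_i$, ``$C$''$=F_{ij}$, $p=b_{ij}$, $r=\tfrac38$, $q=w$: the hypotheses hold because $d_\infty(b_{ij},F_{ij})=0\le\tfrac38$ and $d_\infty(b_{ij},\partial F_{ij})=\tfrac12\ge\tfrac38$, so there is a cube of $\mathcal{C}$ containing both $w$ and $F_{ij}$. Re-reading the proof of Lemma~\ref{lem1}, this cube is the last cube of a taut string from $b_{ij}$ to $w$, and since $d_\infty(b_{ij},w)<r=\tfrac38$ the coordinate estimates there place the $F_{ij}$-coordinates of $w$ strictly inside $(0,1)$ and the remaining coordinates strictly below $1$; hence $F_{ij}$ is a face of the carrier $\mathrm{car}(w)$ of $w$ in $\mathcal{C}$. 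This holds for all three faces, so $\mathrm{car}(w)$ contains $F_{12}\cup F_{13}\cup F_{23}$, and (using the elementary fact that $C_i$ is the smallest cube of $\mathcal{C}$ containing $F_{ij}\cup F_{ik}$) therefore $C_1\cup C_2\cup C_3$. Taking $D=\mathrm{car}(w)$ and passing to the $(n+3)$-face described above completes the verification.

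The step I expect to require the most care is the bookkeeping in the second paragraph: transcribing the coordinate inequalities from the proof of Lemma~\ref{lem1} with $r=\tfrac38$, and extracting from them that $F_{ij}$ is a face of $\mathrm{car}(w)$ — in particular making sure the relevant coordinates of $w$ are pinned away from both $0$ and $1$. The small but essential point that makes this work is the strictness $d_\infty(b_{ij},w)<r$, obtained by exploiting that the three balls overlap with room to spare; with the naive radius $\tfrac14$ one would only get $d_\infty(b_{ij},w)=\tfrac14$ as soon as the metric admits no shortcut between the barycenters of the $(n+1)$-faces, and then the estimates degenerate and this argument breaks down. The remaining combinatorial assembly of the $(n+3)$-cube inside $\mathrm{car}(w)$ is routine.
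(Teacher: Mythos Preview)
Your overall strategy---placing balls around the barycenters $b_{ij}$ of the three $(n+1)$-faces, finding a common point $w$, and then reading off from the proof of Lemma~\ref{lem1} that each $F_{ij}$ is a face of $\mathrm{car}(w)$---is sound and gives a pleasant symmetric variant of the paper's argument. The reduction in your first paragraph to finding a single cube $D\supseteq C_1\cup C_2\cup C_3$, and the extraction of the $(n+3)$-face inside $D$, are both correct.

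There is, however, one genuine gap. You use balls of fixed radius $\tfrac38$, but the hypothesis is only \emph{local} $3$-hyperconvexity: each point has some neighborhood in which the $3$-ball intersection property holds, and that neighborhood may have diameter much smaller than $\tfrac38$. The phrase ``since the whole configuration lies in a region of bounded diameter'' does not address this; bounded is not the same as small. The fix is to scale the entire configuration toward the barycenter $b_C$ of the common $n$-cube by a factor $\lambda$, replacing $b_{ij}$ by $b_{ij}^\lambda=(1-\lambda)b_C+\lambda b_{ij}$ and $r=\tfrac38$ by $r=\tfrac{3\lambda}{8}$. All distance ratios are preserved, the hypotheses of Lemma~\ref{lem1} still hold (now $d_\infty(b_{ij}^\lambda,\partial F_{ij})=\tfrac{\lambda}{2}\ge\tfrac{3\lambda}{8}$ and $b_{ij}^\lambda\in F_{ij}$), and for $\lambda$ small the three balls lie inside a $3$-hyperconvex neighborhood of $b_C$. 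This is precisely what the paper does with its parameter $\epsilon$, and without it your step ``local $3$-hyperconvexity yields a point $w$'' is unjustified.

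For comparison, the paper proceeds by contradiction and asymmetrically: it picks one point $x$ in the interior of the face $A\cap B$ and two points $y,z$ inside the opposite cube $C$, applies Lemma~\ref{lem1} once to trap the common point $p$ in a cube containing $C$, and then applies it a second time in contrapositive form (using that no cube contains both $A\cap B$ and $C$) to obtain $d_\infty(p,x)>\tfrac{\epsilon}{4}$, contradicting $p\in\bar B(x,\tfrac{\epsilon}{4})$. Your route trades the contradiction for three parallel applications of the lemma plus the carrier observation; once the scaling is inserted, it is a legitimate and arguably cleaner alternative.
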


\begin{proof}
	We must check the link condition. Let $A,B,C$ be three $(k+2)$-cubes such that $A \cap B \cap C$ is a $k$-cube and $A \cap B$, $A \cap C$ and $B \cap C$ are $(k+1)$-cubes. We need to show that $A$, $B$ and $C$ are contained in some $(k+3)$-cube. Assume that they are not.
	
	Note that then there is also no cube containing $C$ and $A\cap B$. Indeed if they are contained in such a cube $D$, then $A \cap C$ and $A \cap B$ are two different $(k+1)$-faces of the same $(k+2)$-face of $D$ which therefore coincides with $A$. Similarly $B \subset D$.
	
	The goal is now to find three closed balls in $\mathcal{C}$ that intersect pairwise but have no common intersection point. For doing this, we introduce coordinates on our cubes $A$,$B$ and $C$ such that 
\begin{align*}
	A \cap B &= \{(x_1, \ldots , x_{k+2})_A : x_{k+2}=0 \} &= \{(y_1, \ldots , y_{k+2})_B : y_{k+2}=0 \} \\
	A \cap C &= \{(x_1, \ldots , x_{k+2})_A : x_{k+1}=0 \} &= \{(z_1, \ldots , z_{k+2})_C : z_{k+2}=0 \} \\
	B \cap C &= \{(y_1, \ldots , y_{k+2})_B : y_{k+1}=0 \} &= \{(z_1, \ldots , z_{k+2})_C : z_{k+1}=0 \}
\end{align*}	
	
	Choose $\epsilon \in (0,1)$ such that $\bar{B}((\tfrac{1}{2},\ldots, \tfrac{1}{2}, 0,0)_A,\epsilon)$ has the $(3,2)$-IP. Define $x=(\tfrac{1}{2},\ldots, \tfrac{1}{2}, \tfrac{\epsilon}{8},0)_A = (\tfrac{1}{2},\ldots, \tfrac{1}{2}, \tfrac{\epsilon}{8},0)_B$, $y=(\tfrac{1}{2},\ldots, \tfrac{1}{2}, \tfrac{3\epsilon}{8},\tfrac{\epsilon}{8})_C$ and $z=(\tfrac{1}{2},\ldots, \tfrac{1}{2}, \tfrac{\epsilon}{8},\tfrac{3\epsilon}{8})_C$. It is easy to check that $d_\infty(x,y),d_\infty(x,z) \leq \tfrac{\epsilon}{4} + \tfrac{\epsilon}{8}$ and $d_\infty(y,z) \leq \tfrac{\epsilon}{8} +\tfrac{\epsilon}{8}$. By the $(3,2)$-IP we therefore have $\bar{B}(x,\tfrac{\epsilon}{4}) \cap \bar{B}(y,\tfrac{\epsilon}{8}) \cap \bar{B}(z,\tfrac{\epsilon}{8}) \neq \emptyset$. Let $p$ be any point in this intersection. Since $p\in \bar{B}(y,\tfrac{\epsilon}{8}) \cap \bar{B}(z,\tfrac{\epsilon}{8})$ we have $d(p, \partial C) \geq \tfrac{2 \epsilon}{8}$ and $d(p,c)\leq \tfrac{\epsilon}{8}$. Hence by Lemma~\ref{lem1} there is some cube $D$ containing $p$ and $C$. Thus $p=(u_1, \ldots ,u_k, \tfrac{\epsilon}{4},\tfrac{\epsilon}{4},u_{k+3}, \ldots, u_n)_D$ with $u_i \in [\tfrac{1}{2}-\tfrac{\epsilon}{8},\tfrac{1}{2}+\tfrac{\epsilon}{8}]$ for $i=1, \ldots,k$ and $u_i \in [0,\tfrac{\epsilon}{8}]$ for $i=k+3, \ldots,n$. Especially $p$ satisfies the assumption of Lemma~\ref{lem1} for $r=\tfrac{\epsilon}{4}$ and since there is no cube containing $x$ and $C$ we have $d_\infty(p,x) > \tfrac{\epsilon}{4}$ which contradicts $p \in \bar{B}(x,\tfrac{\epsilon}{4})$.
\end{proof}

Since injective metric spaces are geodesic, simply connected and ($3$-)hyperconvex, we conclude that Theorem~\ref{thm:CAT(0)} holds. But the converse is false in general as the following example mentioned in \cite{mai_infinite} shows.

\begin{example}\label{ex:not injective}
	For every $n \in \mathbb{N}$ fix some cube $C_n=[0,1]^n$. We glue them by identifying $C_n=[0,1]^n \times \{0\}^{n'-n} \subset [0,1]^{n'}=C_{n'}$ for $n \leq n'$. Then the resulting cube complex is not complete and hence not injective. For instance $x_n=(\frac{1}{2},\frac{1}{4}, \ldots , \frac{1}{2^n}) \in C_n$ defines a divergent Cauchy sequence. 
\end{example}


\section{Criterion for Injectivity}\label{sec:collapsible}



\subsection{Collapsible Cube Complexes}


\begin{definition}
	Let $\mathcal{C}$ be a cube complex. A subset $L$ of $|\mathcal{C}|$ is called \emph{cuboid} if it is connected and for all cubes $C \in \mathcal{C}$ the intersection $L \cap C$ is a face of $C$, given $L\cap C \neq \emptyset$.
	
	A connected subset $Y$ of $|\mathcal{C}|$ is called \emph{generalised cuboid} of $\mathcal{C}$ if for all cells $C=p_\lambda(I_\lambda^{n_\lambda}) \in \mathcal{C}$ either $Y \cap C= \emptyset$ or there are $0\leq s_i \leq t_i \leq 1$, for $i=1,\ldots,n_\lambda$, such that
	\begin{equation*}
		Y \cap C = \{ p_\lambda((y_1,\ldots,y_n)) : s_i\leq y_i \leq t_i, i=1,\ldots,n_\lambda\}.
	\end{equation*}
\end{definition}

\begin{definition}
	Let $\mathcal{C}$ be a cubical complex. $\mathcal{C}$ is called \emph{collapsible} if there is a sequence of subcomplexes $\mathcal{C}_0, \mathcal{C}_1, \ldots, \mathcal{C}_m$ of $\mathcal{C}$ and for each $k=0,1,\ldots ,m-1$ a family of nonempty connected subcomplexes $\mathcal{L}_{k,i}$ of $\mathcal{C}_k$, $i \in J_k$, such that $\mathcal{C}_0$ is one point, $\mathcal{C}_m=\mathcal{C}$ and $\mathcal{C}_{k+1}=\mathcal{C}_k \cup \bigcup_{i \in J_k} (\mathcal{L}_{k,i}\times \mathcal{I}_i)$, where
	\begin{equation*}
		\mathcal{I}_i=\{ \{0\} , \{1\}, [0,1] \} \text{ and } \mathcal{L}_{k,i}\times \mathcal{I}_i= \{ C\times D : C \in \mathcal{L}_{k,i} , D \in \mathcal{I}_i \},
	\end{equation*}
	for $k=0,1,\ldots,m-1$, and each $C \in \mathcal{L}_{k,i}$ is identified with $C\times \{0\}$. A collapsible cubical complex is \emph{regularly collapsible} if each $|\mathcal{L}_{k,i}|$ is a cuboid in $\mathcal{C}_k$.
	The sequence $\mathcal{C}_0, \mathcal{C}_1, \ldots, \mathcal{C}_m$ is then called a regular decomposition of $\mathcal{C}$.
\end{definition}

\begin{lem}\label{lem:hyperplanes in collapsible complex}
	Let $\mathcal{C}$ be a cube complex with a family of subcomplexes $\mathcal{L}_i, i\in J$. Then the set $\mathcal{H}'$ of hyperplanes in $\mathcal{C}'=\mathcal{C} \cup \bigcup_{i\in J} \mathcal{L}_i \times \mathcal{I}$ consists of extensions of hyperplanes in $\mathcal{C}$ and one hyperplane $\mathfrak{h}_i$ for each $i\in J$. Moreover the hyperplanes $\{ \mathfrak{h}_i : i\in J\}$ are pairwise disjoint and extremal. 
\end{lem}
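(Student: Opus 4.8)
The plan is to analyze the cube complex $\mathcal{C}' = \mathcal{C} \cup \bigcup_{i\in J} \mathcal{L}_i \times \mathcal{I}$ edge-by-edge, sorting edges of $\mathcal{C}'$ into two types: the \emph{old} edges, which already belong to $\mathcal{C}$ or are a copy $e\times\{1\}$ of an old edge sitting in some $\mathcal{L}_i$, and the \emph{new} edges of the form $v\times[0,1]$ for vertices $v$ of some $\mathcal{L}_i$. First I would check that the squares of $\mathcal{C}'$ not already in $\mathcal{C}$ are exactly the squares $e\times[0,1]$ for edges $e$ of some $\mathcal{L}_i$ (together with their copies $s\times\{1\}$ for squares $s$ of $\mathcal{L}_i$), and that in such a square the two pairs of opposite edges are (a) the pair $e\times\{0\}, e\times\{1\}$, both old, and (b) the pair of new edges $v\times[0,1], w\times[0,1]$ where $v,w$ are the endpoints of $e$. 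From this the $\Box$-relation decomposes: an old edge is never $\Box$-related to a new edge, so the equivalence classes either consist entirely of old edges — and these are precisely the classes of $\mathcal{C}$, possibly enlarged by copies $e\times\{1\}$ — or they consist entirely of new edges. This gives the first assertion: the hyperplanes of $\mathcal{C}'$ are the extensions of the hyperplanes of $\mathcal{C}$ together with one new hyperplane per new class.

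Next I would show there is exactly one new class for each $i\in J$: since $\mathcal{L}_i$ is connected, any two vertices $v,w$ of $\mathcal{L}_i$ are joined by an edge-path in $\mathcal{L}_i$, and walking along this path the corresponding new edges $v\times[0,1]$ are successively $\Box$-related through the squares $e\times[0,1]$; conversely, every new edge lies over some $\mathcal{L}_i$. Hence the new edges over a fixed $\mathcal{L}_i$ form a single class, call its dual hyperplane $\mathfrak{h}_i$, and distinct $i$ give distinct classes because a new edge remembers which $\mathcal{L}_i$ it came from. Concretely $\mathfrak{h}_i$ is the union of the midcubes of the cubes $C\times[0,1]$, $C\in\mathcal{L}_i$, cut at level $\tfrac12$ in the $[0,1]$-direction; so $\mathfrak{h}_i$ is a copy of $|\mathcal{L}_i|$.

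It then remains to see that the $\mathfrak{h}_i$ are pairwise disjoint and extremal. For disjointness: a point of $\mathfrak{h}_i$ lies in the interior (in the last coordinate) of some cube $C\times[0,1]$ with $C\in\mathcal{L}_i$ and last coordinate $\tfrac12$; such cubes over different $i$ meet, if at all, only inside $\mathcal{C}$ itself, i.e. at last coordinate $0$, so two distinct $\mathfrak{h}_i,\mathfrak{h}_j$ cannot share a point. For extremality I would identify the two halfspaces of $\mathfrak{h}_i$: one side is $\{C\times\{1\} : C\in\mathcal{L}_i\}$ together with the open collar $\mathcal{L}_i\times(\tfrac12,1]$, which is a copy of $|\mathcal{L}_i|$ and contains no other edges, so it meets no other hyperplane and hence supports no proper chain of halfspaces above it — making it minimal; equivalently, its complementary halfspace (all of $\mathcal{C}'$ on the other side) is maximal, so $\mathfrak{h}_i$ is extremal by definition. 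The only real subtlety — and the step I would be most careful with — is the bookkeeping for the "old" classes: one must make sure that gluing $\mathcal{L}_i\times[0,1]$ does not merge two previously distinct classes of $\mathcal{C}$, which again follows because a square $e\times[0,1]$ relates a new edge to a new edge and the square $s\times\{1\}\cong s$ relates copies $e\times\{1\},f\times\{1\}$ exactly when $e\Box f$ already held in $\mathcal{C}$. I would also note that, as stated, $\mathcal{C}'$ need not itself be CAT(0), so when the lemma is invoked inside the proof of Theorem~\ref{thm:injective} the hyperplane terminology is to be read combinatorially (as dual to $\Box$-classes), which is all the argument uses.
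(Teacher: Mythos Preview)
Your proposal is correct and follows essentially the same route as the paper: classify the edges of $\mathcal{C}'$ into ``old'' ones (edges of $\mathcal{C}$ and their copies $e\times\{1\}$) and ``new'' ones ($v\times[0,1]$), analyse the new squares $e\times[0,1]$ and $s\times\{1\}$, and read off that $\Box$ never mixes old with new, that the new edges over a fixed connected $\mathcal{L}_i$ form a single class $\mathfrak{h}_i$, and that these $\mathfrak{h}_i$ are pairwise disjoint. Two minor differences worth noting: the paper handles the ``old classes do not merge'' step by a shortest-chain shortening argument, whereas your structural observation (the projection $e\times\{1\}\mapsto e$ sends every generating $\Box$-relation among old edges to a $\Box$-relation already present in $\mathcal{C}$) is equally valid and arguably cleaner; and you actually supply an argument for extremality (identifying $\mathcal{L}_i\times\{1\}$ as the minimal halfspace), which the paper's proof leaves implicit.
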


\begin{proof}
	Observe that there are only two types of new edges $e' \in \mathcal{C}'\setminus\mathcal{C}$. Either it is of the form $e'=e\times \{1\}$ for some edge $e\in \mathcal{L}_i$ or  $e'=v\times [0,1]$ for some vertex $v\in \mathcal{L}_i$. Edges of the first type are always equivalent to an edge in $\mathcal{C}$, since $e\times \{1\} \Box e\times \{0\}$. Next we show that hyperplanes dual to some edge in $\mathcal{C}$ are extensions of hyperplanes in $\mathcal{C}$, i.e. that edges in $\mathcal{C}$ are equivalent in $\mathcal{C}'$ if and only if they are already equivalent inside $\mathcal{C}$. Assume that $e_0',\ldots,e_l'$ is a shortest sequence of edges between two edges $e_0',e_l' \in \mathcal{C}$ such that $e_k,e_{k+1}$ are opposite in some square. We claim that $e_k'\in \mathcal{C}$ for all $k$. If not there are is some maximal subsequence $e_{k_0}'=e_{k_0}\times \{1\},\ldots,e_{l_0}'=e_{l_0}\times\{1\}$ in $\mathcal{C}\setminus\mathcal{C}$, but then $e_0',\ldots,e_{k_0-1}'=e_{k_0}\times \{0\},e_{k_0+1}\times \{0\} , \ldots,e_{l_0-1}\times \{0\},e_{l_0}\times \{0\}=e_{l_0+1}',\ldots ,e_l'$ is strictly smaller.
	
	On the other hand, edges $v\times [0,1], v\in \mathcal{L}_i$ and $w\times [0,1],w\in \mathcal{L}_j$ of the second type are equivalent if and only if $i=j$. Indeed, if $i=j$, since $\mathcal{L}_i$ is connected, there is some discrete path $x_0,\ldots,x_l$ from $v$ to $w$ in $\mathcal{L}_i$ and $x_k\times [0,1]\Box x_{k+1}\times [0,1]$. Conversely squares containing $v\times [0,1]$ are always of the form $e\times [0,1], e\in \mathcal{L}_i$. Define $\mathfrak{h}_i$ as the hyperplane dual to all edges of the form $v\times [0,1], v\in \mathcal{L}_i$. Two such hyperplanes are disjoint since there is no square containing an edge dual to $\mathfrak{h}_i$ and one dual to $\mathfrak{h}_j$ at the same time.
\end{proof}

\begin{cor}\label{cor:finite width}
	A collapsible cube complex has finite width.
\end{cor}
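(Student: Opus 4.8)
The plan is to follow a regular decomposition $\mathcal{C}_0,\mathcal{C}_1,\dots,\mathcal{C}_m$ of $\mathcal{C}$ and show, by induction on $k$, that $\operatorname{width}(\mathcal{C}_k)\le 2k$; taking $k=m$ then gives $\operatorname{width}(\mathcal{C})\le 2m<\infty$. (Throughout, each $\mathcal{C}_k$ is a CAT(0) cube complex, so that Lemma~\ref{lem:hyperplanes in collapsible complex} applies and hyperplanes, halfspaces and the width are all defined; this is needed even to state the corollary.) The base case is immediate: $\mathcal{C}_0$ is a single point, it has no hyperplanes and hence no halfspaces, so $\operatorname{width}(\mathcal{C}_0)=0$.

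For the inductive step I would apply Lemma~\ref{lem:hyperplanes in collapsible complex} to $\mathcal{C}_k$ with the family $\mathcal{L}_{k,i}$, $i\in J_k$: the hyperplanes of $\mathcal{C}_{k+1}$ are the extensions $\hat{\mathfrak{g}}$ of the hyperplanes $\mathfrak{g}$ of $\mathcal{C}_k$, together with the pairwise disjoint, \emph{extremal} hyperplanes $\mathfrak{h}_i$, $i\in J_k$. Let $H_1\subsetneq H_2\subsetneq\dots\subsetneq H_n$ be an arbitrary chain of halfspaces of $\mathcal{C}_{k+1}$ and let $\mathfrak{k}_j$ be the hyperplane bounding $H_j$; these are pairwise distinct, since two distinct halfspaces with the same bounding hyperplane are complementary and so cannot be nested. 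The key observation is that at most two of the $\mathfrak{k}_j$ are new hyperplanes $\mathfrak{h}_i$: if $\mathfrak{k}_j=\mathfrak{h}_i$, then $H_j$ is the minimal or the maximal halfspace of the extremal hyperplane $\mathfrak{h}_i$; in the first case no halfspace lies strictly inside $H_j$, forcing $j=1$, and in the second case no halfspace strictly contains $H_j$, forcing $j=n$.

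It then remains to bound the number of indices $j$ with $\mathfrak{k}_j=\hat{\mathfrak{g}}_j$ an extension of a hyperplane $\mathfrak{g}_j$ of $\mathcal{C}_k$. Since $\hat{\mathfrak{g}}_j\cap|\mathcal{C}_k|=\mathfrak{g}_j$, the set $G_j:=H_j\cap|\mathcal{C}_k|$ is one of the two halfspaces of $\mathfrak{g}_j$ in $\mathcal{C}_k$. Listed in increasing order of $j$, these $G_j$ form a chain in $\mathcal{C}_k$, and it is strictly increasing: if $G_j=G_{j'}$ for $j<j'$, then $\mathfrak{g}_j=\mathfrak{g}_{j'}$ (a halfspace determines its bounding hyperplane), hence $\hat{\mathfrak{g}}_j=\hat{\mathfrak{g}}_{j'}$ (extensions are unique), so $H_j$ and $H_{j'}$ are halfspaces of one and the same hyperplane; being nested they coincide, contradicting $H_j\subsetneq H_{j'}$. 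There are at least $n-2$ such indices, so $n-2\le\operatorname{width}(\mathcal{C}_k)\le 2k$, i.e. $n\le 2(k+1)$. As the chain was arbitrary, $\operatorname{width}(\mathcal{C}_{k+1})\le 2(k+1)$, completing the induction.

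The step I expect to need the most care is the bookkeeping with \emph{extensions} of hyperplanes: one has to verify that the extension $\hat{\mathfrak{g}}$ of a hyperplane $\mathfrak{g}$ of $\mathcal{C}_k$ really restricts to $\mathfrak{g}$ on $|\mathcal{C}_k|$ — equivalently, that its two halfspaces restrict to the two halfspaces of $\mathfrak{g}$ — which I would deduce from the analysis in the proof of Lemma~\ref{lem:hyperplanes in collapsible complex} that edges of $\mathcal{C}_k$ are $\Box$‑equivalent in $\mathcal{C}_{k+1}$ exactly when they are already $\Box$‑equivalent in $\mathcal{C}_k$. The only other subtlety is making sure the restricted chain $(G_j)$ is genuinely \emph{strictly} increasing rather than merely non‑decreasing, which is handled above via uniqueness of the bounding hyperplane of a halfspace.
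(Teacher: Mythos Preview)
Your argument is correct and is exactly the paper's approach: induct along the decomposition, use Lemma~\ref{lem:hyperplanes in collapsible complex} to see that the new hyperplanes are extremal and hence their halfspaces can only sit at the two ends of a chain, and conclude $\operatorname{width}(\mathcal{C}_{k+1})\le\operatorname{width}(\mathcal{C}_k)+2$. You spell out more than the paper does---in particular the verification that restricting the middle halfspaces to $|\mathcal{C}_k|$ yields a \emph{strictly} increasing chain---and you correctly identify the one point that needs care, namely that an extended hyperplane $\hat{\mathfrak{g}}$ restricts back to $\mathfrak{g}$ on $|\mathcal{C}_k|$.
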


\begin{proof}
	Let $\mathcal{C}$ be a cube complex with finite width and $\mathcal{C}'=\mathcal{C} \cup \bigcup_{i\in J} \mathcal{L}_i \times \mathcal{I}$ as before. We claim that $\mathcal{C}'$ also has finite width. Since all new hyperplanes are extremal, the corresponding halfspaces only occur at the ends of chains. Therefore $\operatorname{width} (\mathcal{C}') \leq \operatorname{width} (\mathcal{C}) + 2$.
\end{proof}

\begin{cor}\label{cor:colorable}
	The set of all hyperplanes in a collapsible cube complex is finitely colorable.
\end{cor}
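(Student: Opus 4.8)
The plan is to induct along a regular decomposition $\mathcal{C}_0,\mathcal{C}_1,\ldots,\mathcal{C}_m$ of $\mathcal{C}$, maintaining a uniform bound on the number of colors. The key structural input is Lemma~\ref{lem:hyperplanes in collapsible complex}: passing from $\mathcal{C}_k$ to $\mathcal{C}_{k+1}=\mathcal{C}_k\cup\bigcup_{i\in J_k}(\mathcal{L}_{k,i}\times\mathcal{I}_i)$, the hyperplanes of $\mathcal{C}_{k+1}$ are the extensions of the hyperplanes of $\mathcal{C}_k$ together with one new hyperplane $\mathfrak{h}_i$ per index $i\in J_k$, and crucially these new hyperplanes $\{\mathfrak{h}_i:i\in J_k\}$ are pairwise disjoint. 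So in the crossing graph $\Gamma(\mathcal{C}_{k+1})$, the new vertices form an independent set.

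First I would record the base case: $\mathcal{C}_0$ is a point, so $\Gamma(\mathcal{C}_0)$ is empty and is $1$-colorable (vacuously, or with no colors). For the inductive step, suppose $\mathcal{H}(\mathcal{C}_k)$ admits a coloring $\Phi_k\colon\mathcal{H}(\mathcal{C}_k)\to\{1,\ldots,N\}$ for some fixed $N$ independent of $k$. I extend $\Phi_k$ to $\mathcal{H}(\mathcal{C}_{k+1})$ as follows. Each old hyperplane of $\mathcal{C}_{k+1}$ is the extension of a unique hyperplane of $\mathcal{C}_k$, and two old hyperplanes cross in $\mathcal{C}_{k+1}$ only if their restrictions already cross in $\mathcal{C}_k$ (a square witnessing the crossing, being a $2$-cube, either lies in $\mathcal{C}_k$ or is of the form $e\times[0,1]$ with $e\in\mathcal{L}_{k,i}$, and in the latter case exactly one of the two dual hyperplanes is a new $\mathfrak{h}_i$); so the restriction of $\Phi_k$ still works on old hyperplanes. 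It remains to color the new hyperplanes $\mathfrak{h}_i$. Here I must be careful: $\mathfrak{h}_i$ may cross old hyperplanes — indeed $\mathfrak{h}_i$ crosses exactly the extensions of those hyperplanes of $\mathcal{C}_k$ that meet the cuboid $|\mathcal{L}_{k,i}|$. But since the $\mathfrak{h}_i$ are pairwise disjoint, I may give every new hyperplane a single fresh color $N+1$, and no conflict arises among the new ones, nor between a new one and any old one. Thus $\mathcal{H}(\mathcal{C}_{k+1})$ is $(N+1)$-colorable.

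The apparent obstacle — and the main point to get right — is that this naive argument gives $N=N_0+m$, which grows with the length $m$ of the decomposition and is fine when $m$ is finite but would be useless if one wanted a bound independent of $m$. For the corollary as stated ("finitely colorable"), $m$ is a fixed finite number since the decomposition in the definition of collapsibility has finitely many stages, so $N_0+m$ is a legitimate finite bound and the induction closes immediately. (If one wanted the sharper bound $\operatorname{dim}\mathcal{C}+1$ one would need to reuse colors across stages, observing that $\mathfrak{h}_i$ can reuse any color not used by a hyperplane of $\mathcal{C}_k$ meeting $\mathcal{L}_{k,i}$, together with the fact from the paragraph after Lemma~\ref{lem:halfspaces and hyperplanes} relating colorings to dimension; but that refinement is not needed here.) So the proof reduces to: start with one color for the point, and add one new color at each of the finitely many collapsing steps, using Lemma~\ref{lem:hyperplanes in collapsible complex} to see that the newly created hyperplanes at a given step are mutually disjoint and hence can all share that single new color, while the inherited coloring continues to separate all previously present hyperplanes.
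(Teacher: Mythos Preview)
Your argument is correct and follows exactly the paper's approach: assign to each hyperplane the index $k$ of the step at which it is created, and use Lemma~\ref{lem:hyperplanes in collapsible complex} to see that hyperplanes created at the same step are pairwise disjoint, yielding an $m$-coloring. You even make explicit a point the paper leaves tacit, namely that extensions of previously disjoint old hyperplanes stay disjoint (since every new square in $\mathcal{C}_{k+1}\setminus\mathcal{C}_k$ has one of its dual hyperplanes equal to a new $\mathfrak{h}_i$), which is precisely what guarantees the step-index coloring remains proper in the final complex.
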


\begin{proof}
	Let $\mathcal{C}$ be a cube complex with regular decomposition $\mathcal{C}_0, \mathcal{C}_1, \ldots, \mathcal{C}_m$. Lemma~\ref{lem:hyperplanes in collapsible complex} tells us that we can color hyperplanes added in one step with the same color. Hence the set of hyperplanes in $\mathcal{C}$ is $m$-colorable.
\end{proof}

\begin{example}\label{ex:not colorable}
By the simplex-graph construction described in \cite{chepoi-hagen} it is possible to construct a $2$-dimensional CAT(0) cube complex with diameter $4$ such that its hyperplanes are not finitely colorable. Given some graph $G$ the vertices of the simplex graph are given by the simplices of $G$ and two simplices are connected by some edge whenever they differ by one vertex. This is a median graph and therefore defines a CAT(0) cube complex $\mathcal{C}(G)$. Moreover the crossing graph $\Gamma(\mathcal{C}(G))$ is equal to $G$. If $G$ is triangle free then $\mathcal{C}(G)$ is $2$-dimensional and has diameter $\leq 4$. Therefore we are looking for some triangle free graph which is not finitely colorable. Starting with some triangle free graph, e.g. $M_1=(\{\{0\},\{1\}\},\{\{0,1\}\})$, and iterating the Mycielski construction we get a ascending sequence of triangle free graphs $M_1\subset M_2 \subset \ldots \subset M_n \subset \ldots$ with chromatic number $\chi(M_n)=n$. Then $M=\bigcup_{i=1}^\infty M_n$ is a triangle free graph with unbounded chromatic number. The corresponding CAT(0) cube complex $\mathcal{C}(M)$ has the desired properties. 
\end{example}


\subsection{Collapsible Cube Complexes are Injective}


\begin{lem} \label{lem:projection}
	Let $\mathcal{L}_i,i\in J,$ be a family of subcomplexes of a cubical complex $\mathcal{C}$, such that the $|\mathcal{L}_i|$'s are cuboids in $\mathcal{C}$. Define $\mathcal{C}'=\mathcal{C} \cup \bigcup _{i \in J} (\mathcal{L}_i \times \mathcal{I}_i)$. There is a projection map $p \colon |\mathcal{C}'| \to |\mathcal{C}|$ given by $p(x)=x$ for $x \in |\mathcal{C}|$ and $p(y,t)=y$ for $(y,t) \in |\mathcal{L}_i\times \mathcal{I}_i|$. If $X$ is a generalized cuboid of $\mathcal{C}'$ then
	\begin{enumerate}[(i)]
	\item If $X \cap |\mathcal{C}| = \emptyset$, then there is some $i \in J$ and $0<s_i \leq t_i \leq 1$ such that $X= p(X)\times [s_i,t_i] \subset |\mathcal{L}_i \times \mathcal{I}_i|$.
	\item If $X \cap |\mathcal{C}| \neq \emptyset$, then $X \cap |\mathcal{C}|=p(X)$ and for each $i$ either $X\cap |L_i| = \emptyset$ and $X\cap |L_i\times I_i|=\emptyset$ or otherwise there is some $0\leq t_i\leq 1$ such that $X\cap |\mathcal{L}_i\times \mathcal{I}_i|= (X\cap |\mathcal{L}_i|) \times [0,t_i]$.
	\item $p(X)$ is a generalized cuboid of $\mathcal{C}$.
	\end{enumerate}
\end{lem}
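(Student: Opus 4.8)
The plan is to exploit the product structure of the glued pieces and then treat the two cases of the statement in turn. Since $\mathcal{I}_i=\{\{0\},\{1\},[0,1]\}$, the realisation $|\mathcal{L}_i\times\mathcal{I}_i|$ is canonically $|\mathcal{L}_i|\times[0,1]$, with $|\mathcal{L}_i|$ identified with $|\mathcal{L}_i|\times\{0\}$, with cubes $C$, $C\times\{1\}$ and $C\times[0,1]$ for $C\in\mathcal{L}_i$, and with $p$ restricting to the first-factor projection. As $\mathcal{C}'$ is obtained by gluing each $\mathcal{L}_i\times\mathcal{I}_i$ to $\mathcal{C}$ only along $\mathcal{L}_i\times\{0\}$, the sets $P_i:=|\mathcal{L}_i|\times(0,1]=|\mathcal{L}_i\times\mathcal{I}_i|\setminus|\mathcal{L}_i|$ are pairwise disjoint, each open in $|\mathcal{C}'|$ with $\overline{P_i}\setminus P_i\subseteq|\mathcal{L}_i|$, and $|\mathcal{C}'|\setminus|\mathcal{C}|=\bigsqcup_i P_i$; in particular there is a continuous height function $h\colon|\mathcal{C}'|\to[0,1]$ with $h^{-1}(0)=|\mathcal{C}|$ and $h(y,t)=t$ on each piece. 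The second ingredient is that, being a generalised cuboid of $\mathcal{C}'$, the set $X$ meets every cube of $\mathcal{C}'$ in a box; so $X\cap(C\times[0,1])=B_C\times I_C$ for a box $B_C\subseteq C$ and an interval $I_C\subseteq[0,1]$ for each $C\in\mathcal{L}_i$, and whenever two cubes $C,C'\in\mathcal{L}_i$ have a common face $F$ with $X\cap(F\times[0,1])\neq\emptyset$ the two box descriptions must agree on $F\times[0,1]$, which forces $I_C=I_{C'}$.

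For (i), assume $X\cap|\mathcal{C}|=\emptyset$. Then $h>0$ on $X$, so $X\subseteq\bigsqcup_i P_i$, and connectedness of $X$ places $X$ inside a single $P_i=|\mathcal{L}_i|\times(0,1]$. Since $X$ never touches the base $|\mathcal{L}_i|\times\{0\}$, any path inside $X$ between (intersections with) two of the prisms $C\times[0,1]$ must leave one prism through a common face before entering the next; transporting the equalities $I_C=I_{C'}$ along such paths and invoking connectedness shows that $I_C$ equals one fixed interval $[s_i,t_i]$, with $s_i>0$, for every $C\in\mathcal{L}_i$ met by $X$. The box descriptions then assemble to $X=p(X)\times[s_i,t_i]\subseteq|\mathcal{L}_i\times\mathcal{I}_i|$, which is (i).

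For (ii), assume $X\cap|\mathcal{C}|\neq\emptyset$. If $X$ met $P_i$ but not $|\mathcal{L}_i|$, then $X\cap P_i$ would be open in $X$ (since $P_i$ is open) and closed in $X$ (its closure in $|\mathcal{C}'|$ only adds points of $|\mathcal{L}_i|$, which $X$ avoids), hence all of $X$ by connectedness, contradicting $X\cap|\mathcal{C}|\neq\emptyset$; this gives the stated dichotomy, so fix $i$ with $X\cap|\mathcal{L}_i|\neq\emptyset$. The same argument shows $X\cap P_i$ is not closed in $X$, so its closure meets $X\cap|\mathcal{L}_i|$. Now propagate the equalities $I_C=I_{C'}$ along paths inside $X$, which may now pass between prisms of $\mathcal{L}_i\times\mathcal{I}_i$ either through a common face or through the base (here using that $0\in I_C$ exactly when $X$ meets $C=C\times\{0\}$, in which case $X\cap C=B_C$): this yields $I_C=[0,t_i]$ for one fixed $t_i\ge0$ and $B_C=(X\cap|\mathcal{L}_i|)\cap C$ for every $C\in\mathcal{L}_i$ met by $X$, so $X\cap|\mathcal{L}_i\times\mathcal{I}_i|=(X\cap|\mathcal{L}_i|)\times[0,t_i]$. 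Combining these descriptions over all $i$ together with $|\mathcal{C}|$ itself shows $p(X)\subseteq X$, and since trivially $X\cap|\mathcal{C}|\subseteq p(X)\subseteq|\mathcal{C}|$ we conclude $X\cap|\mathcal{C}|=p(X)$, which is (ii).

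Finally, for (iii): $p$ is continuous, so $p(X)$ is connected, and for a cube $C$ of $\mathcal{C}$ the intersection $p(X)\cap C$ is a box. Indeed, in the situation of (ii) it equals $X\cap C$, a box since $C$ is a cube of $\mathcal{C}'$; in the situation of (i), $p(X)\subseteq|\mathcal{L}_i|$ gives $p(X)\cap C=p(X)\cap F$ with $F:=|\mathcal{L}_i|\cap C$ a face of $C$ (and hence a cube of the subcomplex $\mathcal{L}_i$) by the cuboid hypothesis, and $p(X)\cap F$ is the image under the projection $F\times[0,1]\to F$ of the box $X\cap(F\times[0,1])$, so it is a box in $F$ and therefore a box in $C$. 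The step I expect to be the real obstacle is the propagation argument used in both (i) and (ii): turning the local fact that the box descriptions of $X$ on two cubes of $\mathcal{L}_i$ sharing a face coincide on that face into the global statement that $I_C$ is constant over all cubes of $\mathcal{L}_i$ met by $X$. The point is that connectedness of $X$ prevents $X$ from jumping between prisms of $\mathcal{L}_i\times\mathcal{I}_i$ except through shared faces or --- in case (ii) --- through the base $|\mathcal{L}_i|$, and along every such route the box descriptions are rigid enough to carry the interval $I_C$ unchanged; making this precise (rather than arguing informally with paths) is where the work lies, and is probably cleanest to organise as a careful bookkeeping over exactly which cubes of $\mathcal{L}_i$ the set $X$ meets.
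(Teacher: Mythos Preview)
Your proposal is correct and follows essentially the same route as the paper: use connectedness of $X$ together with the box structure on each prism $C\times[0,1]$ to show that the $[0,1]$-coordinate range is constant, and then read off (i)--(iii). The paper organises the step you single out as the obstacle via one clean claim: if $(y,t),(y',t')\in X\cap|\mathcal{L}_i\times\mathcal{I}_i|$ then $(y',t)\in X$, proved by choosing a chain $(y_0,t_0),\ldots,(y_n,t_n)$ in $X$ with consecutive pairs in a common cube (this exists by connectedness) and checking inductively that $(y_k,t)$ lies in the box $X\cap C_{k+1}$; this pointwise statement gives the product structure immediately and replaces the per-cube bookkeeping of intervals $I_C$ you were anticipating.
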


\begin{proof}
	First assume $X \cap |\mathcal{C}| = \emptyset$. Since $X$ is connected it is contained in some $|\mathcal{L}_i \times \mathcal{I}_i|$. Statement \textit{(i)} then follows as well as \textit{(ii)} from the following claim.
	\begin{cl}
		If $(y,t),(y',t') \in X \cap |\mathcal{L}_i\times \mathcal{I}_i|$ then $(y',t) \in X \cap |\mathcal{L}_i\times \mathcal{I}_i|$.
	\end{cl}
	Let $(y,t),(y',t') \in X \cap |\mathcal{L}_i\times \mathcal{I}_i|$. Then there is some sequence $(y,t)=(y_0,t_0), (y_1,t_1), \ldots,(y_n,t_n)=(y',t')$ in $X \cap |\mathcal{L}_i\times \mathcal{I}_i|$ such that for each $k=1,\ldots,n$ there is some cube $C_k$ with $(y_{k-1},t_{k-1}),(y_k,t_k) \in C_k$. We show inductively that $(y_k,t) \in X$. $(y_0,t) \in X$ by assumption. Thus let $(y_k,t) \in X$. We have $(y_k,t),(y_{k+1},t_{k+1}) \in X\cap C_{k+1} = \prod [a_m,b_m]$ and therefore $(y_{k+1},t) \in X\cap C_{k+1}$.

	It remains proving (iii). If $X \cap |\mathcal{C}| \neq \emptyset$ then $p(X)=X\cap \mathcal{C}$ and therefore is a generalized cuboid of $\mathcal{C}$. On the other hand, if $X \cap |\mathcal{C}| = \emptyset$, we have that $p(X)$ is a generalized cuboid in $\mathcal{L}_i$ and therefore, since $|\mathcal{L}_i|$ is a generalized cuboid itself, $p(X)$ is as well a generalized cuboid in $\mathcal{C}$.
\end{proof}

For $r \geq 0$ and subsets $Y \subset \mathcal{C}$ and $X \subset \mathcal{C}'$ we define
\begin{equation*}
	\bar{B}(Y,r)=\{y\in |\mathcal{C}| : d_\infty(y,Y) \leq r\}
\end{equation*}
\begin{equation*}
	\bar{B}'(X,r)=\{x\in |\mathcal{C}'| : d_\infty(x,X) \leq r\}
\end{equation*}

\begin{rmk}
	Let $X \subset |\mathcal{C}'|$ and $r\geq r_0\geq 0$. Then $\bar{B}'(X,r)=\bar{B}'(\bar{B}'(X,r_0),r-r_0)$.
\end{rmk}

\begin{lem}\label{lem:metric}
	The metric $d_\infty$ has the following properties:
	\begin{enumerate}[(i)]
	\item On $|\mathcal{L}_i \times \mathcal{I}_i|$ the metric fulfills $d_\infty ((y_1,t_1),(y_2,t_2))=\max \{ d_\infty(y_1,y_2), |t_1-t_2| \}$.
	\item For all $x,y \in |\mathcal{C}'|$ we have $d_\infty (p(x),p(y))\leq d_\infty (x,y)$.
	\end{enumerate}
\end{lem}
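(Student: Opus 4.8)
The plan is to verify the two statements more or less directly from the definition of $d_\infty$ as an infimum over lengths of strings, using the characterization of $d_\infty$ by taut strings (Proposition~\ref{prop:metric by taut stings}) and the combinatorial structure of the cubes in $\mathcal{C}'$ recorded in Lemma~\ref{lem:projection}. For part (ii) I would argue that the projection $p$ is $1$-Lipschitz on a single cube: every cube $C$ of $\mathcal{C}'$ is either a cube of $\mathcal{C}$, on which $p$ is the identity, or a cube of the form $C_0\times\{1\}$ with $C_0\in\mathcal{L}_i$, on which $p$ is again an isometry onto $C_0$, or a cube $C_0\times[0,1]$ with $C_0\in\mathcal{L}_i$; in that last case $p$ is the coordinate projection $[0,1]^{m}\times[0,1]\to[0,1]^m$, which is $1$-Lipschitz for $\|\cdot\|_\infty$ since dropping a coordinate never increases the max. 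Hence for any string $\Sigma=(x_0,\dots,x_m)$ in $\mathcal{C}'$ joining $x$ to $y$, the tuple $(p(x_0),\dots,p(x_m))$ is a string in $\mathcal{C}$ joining $p(x)$ to $p(y)$ — note each consecutive pair $p(x_j),p(x_{j+1})$ lies in $p(C_j)\subset|\mathcal{C}|$, which is a cube of $\mathcal{C}$ — and $l(p(\Sigma))\le l(\Sigma)$ by the per-cube Lipschitz bound. Taking the infimum over $\Sigma$ gives $d_\infty(p(x),p(y))\le d_\infty(x,y)$.

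For part (i) the inequality $d_\infty((y_1,t_1),(y_2,t_2))\le\max\{d_\infty(y_1,y_2),|t_1-t_2|\}$ is the easy direction: take a (near-)minimal string $\Sigma=(y_1=z_0,\dots,z_k=y_2)$ in $|\mathcal{L}_i|$ realizing $d_\infty(y_1,y_2)$ up to $\varepsilon$, with consecutive points in cubes $C_0,\dots,C_{k-1}$ of $\mathcal{L}_i$; then the ``staircase'' string $(y_1,t_1),(y_1,t_2),(z_1,t_2),\dots,(z_{k-1},t_2),(y_2,t_2)$ stays inside $|\mathcal{L}_i\times\mathcal{I}_i|$ — the first step lies in $\{y_1\}\times[0,1]\subset C_0\times[0,1]$ and each later step in $C_j\times\{t_2\}$ — and has length $|t_1-t_2|+d_\infty(y_1,y_2)$. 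One can do better: interleaving the $t$-moves with the $y$-moves and using that on each cube $C_j\times[0,1]$ the metric is exactly the $\ell_\infty$-product metric, a single string through the product realizes the max rather than the sum, which gives the $\le\max$ bound directly (or, knowing the reverse inequality is already $\ge\max$, the crude sum bound on the relevant segment suffices after noting $|\mathcal{L}_i\times\mathcal{I}_i|$ is convex enough). The reverse inequality $d_\infty((y_1,t_1),(y_2,t_2))\ge\max\{d_\infty(y_1,y_2),|t_1-t_2|\}$ follows from part (ii): applying $p$ gives $d_\infty((y_1,t_1),(y_2,t_2))\ge d_\infty(y_1,y_2)$, and applying the $1$-Lipschitz ``height'' coordinate (the composition $|\mathcal{C}'|\to[0,1]$ sending $|\mathcal{C}|$ to $0$ and $(y,t)\in|\mathcal{L}_i\times\mathcal{I}_i|$ to $t$, which is well-defined and $1$-Lipschitz by the same per-cube argument) gives $d_\infty((y_1,t_1),(y_2,t_2))\ge|t_1-t_2|$.

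The only genuinely delicate point is making sure the ``height'' map $h\colon|\mathcal{C}'|\to[0,1]$ used in the reverse inequality of (i) is well-defined and $1$-Lipschitz globally, not just on $|\mathcal{L}_i\times\mathcal{I}_i|$: one must check it agrees on overlaps, i.e. that a point of $|\mathcal{L}_i\times\mathcal{I}_i|\cap|\mathcal{L}_j\times\mathcal{I}_j|$ gets the same height from both charts, and that cubes of $\mathcal{C}'$ meeting several of the pieces $\mathcal{L}_i\times\mathcal{I}_i$ are handled consistently — but since each $\mathfrak{h}_i$ is a distinct extremal hyperplane (Lemma~\ref{lem:hyperplanes in collapsible complex}) the pieces $|\mathcal{L}_i\times\mathcal{I}_i|$ overlap only along $|\mathcal{C}|$ where $h=0$, so there is no ambiguity. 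Everything else is the routine string-length bookkeeping sketched above. I expect this to be the main (mild) obstacle; the rest is immediate from Lemma~\ref{lem:projection} and the product structure of the cubes $C_0\times[0,1]$.
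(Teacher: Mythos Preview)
The paper states Lemma~\ref{lem:metric} without proof, so there is nothing to compare your argument against; your proposal supplies exactly the routine verification the author omitted, and it is correct. The per-cube $1$-Lipschitz observation for $p$ and for the height map $h$, pushed through the string definition of $d_\infty$, is the natural (and essentially only) way to do this.

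One small remark on part~(i): your interleaving construction for the upper bound tacitly uses a string from $y_1$ to $y_2$ that stays inside $|\mathcal{L}_i|$, which is automatic if $d_\infty(y_1,y_2)$ is read as the intrinsic $\mathcal{L}_i$-metric (and this is all the paper needs in Lemma~\ref{lem:balls}). If one insists on reading it as the ambient $\mathcal{C}'$-metric, you would additionally need that $|\mathcal{L}_i|$ is $d_\infty$-convex in $|\mathcal{C}|$; this is not part of the stated hypotheses, though it holds in the inductive situation of Theorem~\ref{thm:injective}. This is an ambiguity in the paper's formulation rather than a defect in your argument.
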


\begin{lem}\label{lem:balls}
	In the setting of Lemma~\ref{lem:projection} suppose that for every generalized cuboid $Y$ of $\mathcal{C}$ and every $s\geq 0$, $\bar{B}(Y,s)$ is a generalized cuboid of $\mathcal{C}$. Then for every generalized cuboid $X$ of $\mathcal{C}'$ and $r\geq 0$ we have that $\bar{B}'(X,r)$ is a generalized cuboid of $\mathcal{C}'$.
\end{lem}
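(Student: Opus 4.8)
The plan is to fix a generalized cuboid $X$ of $\mathcal{C}'$ and a radius $r\geq 0$ and to identify $\bar{B}'(X,r)$ cube by cube, according to the dichotomy in Lemma~\ref{lem:projection}. Two preliminary simplifications: since $|\mathcal{C}'|$ is a complete geodesic space (Theorem~\ref{thm:finite dimensional cube complexes are geodesic}) and $X$ is connected, the closed metric neighbourhood $\bar{B}'(X,r)$ is connected, so it suffices to check that it meets every cube of $\mathcal{C}'$ in a box $\prod_m[s_m,t_m]$; and since the cubes of $\mathcal{C}'$ are the cubes of $\mathcal{C}$ together with the cubes $C\times\{1\}$ and $C\times[0,1]$ for $C$ a cube of some $\mathcal{L}_i$, and $C\times\{1\}$ is a face of $C\times[0,1]$, only the cubes of $\mathcal{C}$ and the cubes $C\times[0,1]$ need attention. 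Moreover, by the Remark preceding Lemma~\ref{lem:metric} we may write $\bar{B}'(X,r)=\bar{B}'(\bar{B}'(X,r_0),r-r_0)$, and I would use this with $r_0=s_i$ to reduce the case $X\cap|\mathcal{C}|=\emptyset$ to the case $X\cap|\mathcal{C}|\neq\emptyset$: by Lemma~\ref{lem:projection}(i) such an $X$ equals $p(X)\times[s_i,t_i]$ inside a single block $|\mathcal{L}_i\times\mathcal{I}_i|$ with $s_i>0$; if $r<s_i$ the ball stays in that block and, by Lemma~\ref{lem:metric}(i), equals $\big(\bar{B}(p(X),r)\cap|\mathcal{L}_i|\big)\times[s_i-r,\min\{1,t_i+r\}]$, which meets every cube in a box because $p(X)$ is a generalized cuboid of $\mathcal{C}$ (via the cuboid $|\mathcal{L}_i|$) and the hypothesis applies; if $r\geq s_i$ then $\bar{B}'(X,s_i)$ has the same shape but now meets $|\mathcal{C}|$, and the second case applies to it.

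So assume $X\cap|\mathcal{C}|\neq\emptyset$ and set $Y=X\cap|\mathcal{C}|=p(X)$, a generalized cuboid of $\mathcal{C}$ with, for each $i$, $X\cap(|\mathcal{L}_i|\times\mathcal{I}_i)$ either empty or equal to $(Y\cap|\mathcal{L}_i|)\times[0,t_i]$ (Lemma~\ref{lem:projection}(ii)). First I would prove that $d_\infty(z,X)=d_\infty(z,Y)$ for every $z\in|\mathcal{C}|$: any path from $z$ that enters a block must cross that block's base $|\mathcal{L}_i|\times\{0\}\subseteq|\mathcal{C}|$, and cutting it off there and replacing the remainder by the corresponding path at height $0$ — which ends in $Y$ — does not increase its length. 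Hence $\bar{B}'(X,r)\cap|\mathcal{C}|=\bar{B}(Y,r)$, a generalized cuboid of $\mathcal{C}$ by hypothesis, which settles all cubes of $\mathcal{C}$. For a cube $C\times[0,1]$ with $C$ a cube of $\mathcal{L}_i$, I would compute $d_\infty\big((c,t),X\big)$ for $(c,t)\in C\times[0,1]$ by comparing the routes that stay inside the block $|\mathcal{L}_i\times\mathcal{I}_i|$ — whose lengths, by Lemma~\ref{lem:metric}(i) and convexity of the cuboid $|\mathcal{L}_i|$, are $\max\{\,d_\infty(c,Y\cap|\mathcal{L}_i|),\ \max\{t-t_i,0\}\,\}$ — with those that leave through the base and travel to $Y$ in $|\mathcal{C}|$; using gatedness of $|\mathcal{L}_i|$ to express $d_\infty(c,Y)$ through $d_\infty(c,Y\cap|\mathcal{L}_i|)$, the outcome should be $\bar{B}'(X,r)\cap(C\times[0,1])=\big(\bar{B}(Y,r)\cap C\big)\times[0,\tau_i]$ with $\tau_i=\min\{1,t_i+r\}$ when block $i$ meets $X$ (and an analogous constant otherwise), in particular a box since $\bar{B}(Y,r)\cap C$ is one. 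Assembling these descriptions, together with the connectedness noted above, gives that $\bar{B}'(X,r)$ is a generalized cuboid of $\mathcal{C}'$.

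The hard part is the last distance computation, specifically showing that the vertical extent $\tau_i$ over a cube $C$ is constant, independent of the chosen point of $C$. This rests on the observation — already exploited in the proof of Lemma~\ref{lem:projection} — that a block $|\mathcal{L}_i|\times(0,1]$ is attached to the rest of $|\mathcal{C}'|$ only along its base, so that every route from the interior of a block to $X$ either remains in that block or passes exactly once into $|\mathcal{C}|$ through the base, combined with the fact that cuboids are convex and hence gated (by Lemma~\ref{lem:convex subgraphs}, a cuboid being $2$-convex), which is what forces $d_\infty(c,Y)$ and all the block-internal distances in play to be controlled by the single quantity $d_\infty(c,Y\cap|\mathcal{L}_i|)$ and thereby makes $\tau_i$ uniform. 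A secondary, bookkeeping matter is keeping straight the three positions a block can have relative to $X$ — disjoint from $X$; meeting $X$ only in its base ($t_i=0$); meeting $X$ with $t_i>0$ — and checking that the values $\tau_i$ read off on the various cubes of a single $\mathcal{L}_i$ are mutually compatible, so that the pieces glue into a subset of $|\mathcal{C}'|$; this is routine once the distance formula is in hand.
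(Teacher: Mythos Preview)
Your outline follows the paper's proof closely: the same case split on whether $X$ meets $|\mathcal{C}|$, the same reduction of the case $X\cap|\mathcal{C}|=\emptyset$ to the other one via $\bar{B}'(X,r)=\bar{B}'(\bar{B}'(X,s_i),r-s_i)$, and the same identification $\bar{B}'(X,r)\cap|\mathcal{C}|=\bar{B}(Y,r)$ from the $1$-Lipschitz projection $p$. The description of $\bar{B}'(X,r)$ on each block as $(\bar{B}(Y,r)\cap|\mathcal{L}_i|)\times[0,\tau_i]$ is also exactly what the paper obtains.

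Two points where you diverge and should adjust. First, your appeal to Lemma~\ref{lem:convex subgraphs} is misplaced: that lemma concerns the graph metric on the $1$-skeleton of a \emph{median} graph, whereas here one needs statements about $d_\infty$ on $|\mathcal{C}|$, and $\mathcal{C}$ is not assumed CAT(0). The paper never invokes gatedness; the uniformity of the vertical extent $\tau_i$ comes instead from the $1$-Lipschitz height function $h_i\colon|\mathcal{C}'|\to[0,1]$ (equal to $t$ on block $i$ and $0$ elsewhere), together with Lemma~\ref{lem:metric}(i) for the product formula inside a block and Lemma~\ref{lem:metric}(ii) for the horizontal part. Second, the sub-case where a block $|\mathcal{L}_i\times\mathcal{I}_i|$ does \emph{not} meet $X$ deserves more than ``an analogous constant'': the paper handles it by a second use of the reduction remark, taking $r_0=d_\infty(X,|\mathcal{L}_i|)$ so that $\bar{B}'(X,r_0)$ now touches $|\mathcal{L}_i|$ and the previous computation applies, yielding $\tau_i=\min\{1,r-r_0\}$. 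Your attempt to derive this by expressing $d_\infty(c,Y)$ through $d_\infty(c,Y\cap|\mathcal{L}_i|)$ via gatedness is exactly the step that is not available in this generality.
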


\begin{proof}
	Let $X$ be a generalized cuboid of $\mathcal{C}'$ and $r\geq 0$.
	
	First assume that $|\mathcal{C}| \cap X \neq \emptyset$. Then $\bar{B}'(X,r) \cap |\mathcal{C}| = \bar{B}(X,r)$ since for $y \in |\mathcal{C}|$ and $x \in X$ we have 
	\begin{equation}
	d_\infty(y,p(x))=d_\infty(p(y),p(x))\leq d_\infty(y,x)
	\end{equation}
	and $p(x)\in X\cap |\mathcal{C}|$.
	
	For $i\in J$, if $X\cap |\mathcal{L}_i|\neq \emptyset$, using Lemma~\ref{lem:metric} (i), we easily get 
	\begin{equation}	
	\bar{B}'(X,r)\cap |\mathcal{L}_i\times \mathcal{I}_i| = (\bar{B}(X\cap|\mathcal{C}|,r)\cap |\mathcal{L}_i|)\times [0,t_i']
	\end{equation}
	where $t_i'=\min \{1,t_i+r\}$. On the other hand, if $X\cap |\mathcal{L}_i|= \emptyset$, define $r_0= d_\infty(X,|\mathcal{L}_i|)$. We distinguish two cases. If $r_0 < r$ we have $\bar{B}'(X,r)\cap |\mathcal{L}_i\times \mathcal{I}_i|=\emptyset$ and otherwise
	\begin{equation}
	\begin{split}
	\bar{B}'(X,r)\cap |\mathcal{L}_i\times \mathcal{I}_i| &= \bar{B}'(\bar{B}'(X,r_0),r-r_0)\cap |\mathcal{L}_i\times \mathcal{I}_i| \\
	&= (\bar{B}(X\cap|\mathcal{C}|,r)\cap |\mathcal{L}_i|)\times [0,t_i']
	\end{split}
	\end{equation}
	for $t_i'=\min \{ 1, r-r_0 \}$.
	
	We conclude that for $|\mathcal{C}| \cap X \neq \emptyset$ we have
	\begin{equation}
	\bar{B}'(X,r)=\bar{B}(X\cap |\mathcal{C}|,r) \cup \bigcup_{i\in J_X} (B(X\cap |\mathcal{C}|,r)\cap |\mathcal{L}_i|)\times [0,t_i'],
	\end{equation}		
	where $J_X=\{i\in J : d_\infty(X,|L_i|)\leq r\}$. It is connected and all components are generalized cuboids. For a cell $C$ of $\mathcal{C}$ we have $C\cap \bar{B}'(X,r)=C\cap \bar{B}(X\cap |\mathcal{C}|,r)$ and for a cell $C$ of $\mathcal{L}_i\times \mathcal{I}_i$, $C\cap \bar{B}'(X,r)=C\cap ((\bar{B}(X\cap |\mathcal{C}|,r)\cap |\mathcal{L}_i|)\times [0,t_i'])$. Therefore $\bar{B}'(X,r)$ is a generalized cuboid as well.
	
	Let us now consider the case $|\mathcal{C}| \cap X = \emptyset$. Then $X \subset |\mathcal{L}_i\times \mathcal{I}_i|$ for some $i\in J$ and
	\begin{equation}
		X=p(X)\times [s_i,t_i]
	\end{equation}
	If $r \leq s_i$ we have $\bar{B}'(X,r)=\bar{B}(p(X)\cap |\mathcal{L}_i|,r)\times [s_i-r,t_i']$ with $t_i'=\min \{t_i+r,1\}$ which is a generalized cuboid. If $r> s_i$ observe that $\bar{B}'(X,s_i)$ is a generalized cuboid with $\bar{B}'(X,s_i)\cap |\mathcal{C}| \neq \emptyset$ and therefore $\bar{B}'(X,r)=\bar{B}'(\bar{B}'(X,s_i),r-s_i)$ is a generalized cuboid by the first case.
\end{proof}

\begin{definition}
	A cube complex $\mathcal{C}$ has \emph{property (P)} if the following holds. Let $\{X_\alpha : \alpha \in A\}$ be a collection of generalized cuboids of $\mathcal{C}$ with $X_\alpha \cap X_\beta \neq \emptyset$ for all $\alpha, \beta \in A$. Then $\bigcap_{\alpha\in A}X_\alpha \neq \emptyset$.
\end{definition}

\begin{lem}\label{lem:property (P)}
	Let $\mathcal{C},\mathcal{C}'$ be as in Lemma~\ref{lem:projection}. If $\mathcal{C}$ has property (P) then $\mathcal{C}'$ has property (P) as well.
\end{lem}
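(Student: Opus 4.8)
The plan is to push the family down to $\mathcal{C}$ via the projection $p\colon|\mathcal{C}'|\to|\mathcal{C}|$ of Lemma~\ref{lem:projection}, apply property (P) of $\mathcal{C}$ there, and lift the resulting point back up. So let $\{X_\alpha\}_{\alpha\in A}$ be generalized cuboids of $\mathcal{C}'$ that pairwise intersect. Since $p(X_\alpha\cap X_\beta)\subseteq p(X_\alpha)\cap p(X_\beta)$, the sets $p(X_\alpha)$ pairwise intersect, and by Lemma~\ref{lem:projection}(iii) each $p(X_\alpha)$ is a generalized cuboid of $\mathcal{C}$; hence property (P) of $\mathcal{C}$ produces a point $y\in\bigcap_\alpha p(X_\alpha)$. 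Notably, property (P) of $\mathcal{C}$ is invoked only this once.

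First I would dispose of the easy case. If every $X_\alpha$ meets $|\mathcal{C}|$, then by Lemma~\ref{lem:projection}(ii) we have $p(X_\alpha)=X_\alpha\cap|\mathcal{C}|\subseteq X_\alpha$, so $y\in\bigcap_\alpha X_\alpha$ and we are done. Otherwise some $X_{\alpha_0}$ is disjoint from $|\mathcal{C}|$; by Lemma~\ref{lem:projection}(i) it lies inside a single $|\mathcal{L}_i\times\mathcal{I}_i|$, in fact inside $|\mathcal{L}_i\times\mathcal{I}_i|\setminus|\mathcal{C}|$ since the interval $[s_i,t_i]$ appearing there satisfies $s_i>0$. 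The new parts $|\mathcal{L}_j\times\mathcal{I}_j|\setminus|\mathcal{C}|$ for distinct $j$ are pairwise disjoint (they are built from separate copies glued to $\mathcal{C}$ only along level $0$), so $X_\alpha\cap X_{\alpha_0}\neq\emptyset$ forces every $X_\alpha$ to meet $|\mathcal{L}_i\times\mathcal{I}_i|\setminus|\mathcal{C}|$. Feeding this back into Lemma~\ref{lem:projection}: a cuboid disjoint from $|\mathcal{C}|$ must then lie in region $i$ itself, and a cuboid meeting $|\mathcal{C}|$ must fall into the nontrivial branch of part (ii) for this index $i$. Using the product form of the metric on $|\mathcal{L}_i\times\mathcal{I}_i|\cong|\mathcal{L}_i|\times[0,1]$ from Lemma~\ref{lem:metric}(i), I can therefore write $X_\alpha\cap|\mathcal{L}_i\times\mathcal{I}_i|=Z_\alpha\times I_\alpha$ for every $\alpha$, where $I_\alpha\subseteq[0,1]$ is a closed interval with $\max I_\alpha>0$, and $Z_\alpha=p(X_\alpha)\cap|\mathcal{L}_i|$ in both types (for a type-(i) cuboid one already has $p(X_\alpha)\subseteq|\mathcal{L}_i|$). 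Since $y\in p(X_{\alpha_0})\subseteq|\mathcal{L}_i|$, this gives $y\in Z_\alpha$ for all $\alpha$.

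It then remains to produce a single height $t^\ast>0$ with $t^\ast\in I_\alpha$ for all $\alpha$; the point $(y,t^\ast)$ will lie in $Z_\alpha\times I_\alpha\subseteq X_\alpha$ for every $\alpha$ and the proof is complete. For this I would check that the intervals $I_\alpha$ pairwise intersect: whenever a cuboid is disjoint from $|\mathcal{C}|$ it is entirely contained in $|\mathcal{L}_i\times\mathcal{I}_i|$, so that the nonempty intersection of two cuboids automatically lies in $|\mathcal{L}_i\times\mathcal{I}_i|$ and hence splits as a product, while two intervals of the form $I_\alpha=[0,\cdot]$ always share the point $0$. One-dimensional Helly then gives $\bigcap_\alpha I_\alpha=[\sup_\alpha\min I_\alpha,\inf_\alpha\max I_\alpha]\neq\emptyset$, and since $\min I_{\alpha_0}=s_i>0$ every $t^\ast$ in this intersection is positive, so $(y,t^\ast)$ is a genuine point of $|\mathcal{L}_i\times\mathcal{I}_i|$. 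I expect the main work to be not any deep geometric input but the bookkeeping around the case distinction of Lemma~\ref{lem:projection}: keeping the interval factors closed, verifying that distinct "new parts" are disjoint, and checking that $Z_\alpha$ admits the uniform description $p(X_\alpha)\cap|\mathcal{L}_i|$ independently of the type.
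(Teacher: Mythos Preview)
Your argument is correct and follows essentially the same route as the paper's own proof: project to $\mathcal{C}$, apply property~(P) there, split according to whether every $X_\alpha$ meets $|\mathcal{C}|$, and in the remaining case use the product decomposition inside $|\mathcal{L}_i\times\mathcal{I}_i|$ together with one-dimensional Helly on the interval factors $[s_\alpha,t_\alpha]$. The only cosmetic point is that the product form $X_\alpha\cap|\mathcal{L}_i\times\mathcal{I}_i|=Z_\alpha\times I_\alpha$ is supplied by Lemma~\ref{lem:projection} itself rather than by Lemma~\ref{lem:metric}(i).
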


\begin{proof}
	Let $\{X_\alpha : \alpha \in A\}$ be a collection of generalized cuboids of $\mathcal{C}'$ with $X_\alpha \cap X_\beta \neq \emptyset$ for all $\alpha, \beta \in A$. Then since $\mathcal{C}$ has property (P) we know that $\bigcap_{\alpha \in A} p(X_\alpha) \neq \emptyset$.
		
	If $X_\alpha \cap |\mathcal{C}| \neq \emptyset$ for all $\alpha \in A$ we have
	\begin{equation}
		(\bigcap_{\alpha \in A} X_\alpha ) \cap |\mathcal{C}|=\bigcap_{\alpha \in A} (X_\alpha\cap |\mathcal{C}|)=\bigcap_{\alpha \in A} p(X_\alpha) \neq \emptyset
	\end{equation}
	and therefore $\bigcap_{\alpha \in A} X_\alpha\neq\emptyset$.
	
	If $X_{\alpha_0} \cap |\mathcal{C}| = \emptyset$ for some $\alpha_0 \in A$ then $X_{\alpha_0}=p(X_{\alpha_0})\times[s_{\alpha_0},t_{\alpha_0}] \subset |\mathcal{L}_i\times \mathcal{I}_i |$ for some $i\in J$ and $0 < s_{\alpha_0} \leq t_{\alpha_0} \leq 1$. Especially we have for each $\alpha\in A$ that $X_\alpha \cap |\mathcal{L}_i\times \mathcal{I}_i|\neq \emptyset$ and thus there are $0 \leq s_\alpha \leq t_\alpha \leq 1$ such that
	\begin{equation}
		X_\alpha \cap |\mathcal{L}_i\times \mathcal{I}_i|= (p(X_\alpha)\cap |\mathcal{L}_i|) \times[s_\alpha,t_\alpha].
	\end{equation}
	Define $s=\sup \{s_\alpha : \alpha \in A\}$ and $t=\inf\{t_\alpha : \alpha\in A\}$. Then $s\leq t$ since otherwise there are $\alpha_1,\alpha_2 \in A$ with $s_{\alpha_1} > t_{\alpha_2}$. But then $X_{\alpha_1} \subset |\mathcal{L}_i \times \mathcal{I}_i|$ and $X_{\alpha_1}\cap X_{\alpha_2}=\emptyset$. Hence we have
	\begin{equation}
		\bigcap_{\alpha \in A} X_\alpha =(\bigcap_{\alpha \in A} X_\alpha) \cap |\mathcal{L}_i \times \mathcal{I}_i| = (\bigcap_{\alpha \in A} p(X_\alpha))\times [s,t] \neq \emptyset.
	\end{equation}	
\end{proof}

\begin{proof}[Proof of Theorem~\ref{thm:injective}.]
	Let $\mathcal{C}_0, \mathcal{C}_1, \ldots, \mathcal{C}_n$ be a regular decomposition of $\mathcal{C}$ into subcomplexes. First observe that by Lemma~\ref{lem:balls} in each $\mathcal{C}_k$ closed balls are generalized cuboids. Furthermore all subcomplexes $\mathcal{C}_k$ have property (P) by Lemma~\ref{lem:property (P)}. But this tells us that $(|\mathcal{C}|,d_\infty)$ is hyperconvex and therefore injective.
\end{proof}

In view of Theorem~\ref{thm:CAT(0)} we conclude that collapsible cube complexes are CAT(0). This completes the first implication in Theorem~\ref{thm:collapsible} since collapsible cube complexes have finitely colorable hyperplanes and finite width (see Corollaries~\ref{cor:finite width} \& \ref{cor:colorable}).

\begin{cor}\label{cor:CAT(0)}
	Let $\mathcal{C}$ be a regularly collapsible cube complex. Then $(|\mathcal{C}|,d_2)$ is a CAT(0) space.
\end{cor}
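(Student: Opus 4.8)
The plan is to obtain the corollary by simply chaining together the two principal theorems already proved in this section and the previous one. First I would apply Theorem~\ref{thm:injective}: since $\mathcal{C}$ is regularly collapsible, $(|\mathcal{C}|,d_\infty)$ is an injective metric space. I would then feed this directly into Theorem~\ref{thm:CAT(0)}, which asserts precisely that injectivity of $(|\mathcal{C}|,d_\infty)$ forces $(|\mathcal{C}|,d_2)$ to be CAT(0). Composing the two implications yields the claim with no further argument.

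If one prefers not to black-box Theorem~\ref{thm:CAT(0)} and instead wishes to make the deduction self-contained, I would unwind it via the intermediate result of Section~\ref{sec:injective}: an injective (equivalently, hyperconvex) metric space is geodesic, is contractible and hence simply connected, and is $3$-hyperconvex and therefore a fortiori locally $3$-hyperconvex; so the hypotheses of that theorem are satisfied, and its proof verifies Gromov's link condition through the ball-intersection argument built on Lemma~\ref{lem1}. Either route gives the CAT(0) conclusion for $(|\mathcal{C}|,d_2)$.

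There is essentially no obstacle at this point, since all the substantive work has been carried out in establishing Theorem~\ref{thm:injective} (via Lemmas~\ref{lem:projection}, \ref{lem:balls} and~\ref{lem:property (P)}) and Theorem~\ref{thm:CAT(0)}; the corollary is a two-line consequence. The only thing worth recording alongside it is the bookkeeping remark that, by Corollaries~\ref{cor:finite width} and~\ref{cor:colorable}, a regularly collapsible complex also has finite width and finitely colorable hyperplanes — this is what is needed for the forward direction of Theorem~\ref{thm:collapsible} — but none of that is required for the CAT(0) statement itself.
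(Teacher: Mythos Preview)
Your proposal is correct and matches the paper's own argument exactly: the paper derives the corollary in one line by combining Theorem~\ref{thm:injective} with Theorem~\ref{thm:CAT(0)}, and it makes the same side remark about Corollaries~\ref{cor:finite width} and~\ref{cor:colorable} to record the forward direction of Theorem~\ref{thm:collapsible}.
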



\subsection{Collapsible CAT(0) Cube Complexes}


\begin{lem}\label{lem:simple collaps}
	Let $\mathfrak{h}$ be a hyperplane in a CAT(0) cube complex $\mathcal{C}$ which splits the underlying graph $G$ into the halfspaces $H$ and $H'$. If $H$ is minimal with respect to inclusion then there is an isomorphism $G \approx (G \setminus H) \cup (p'(H) \times  \{0,1\})$, where $p' \colon G \to H'$ is the nearest point projection. Its linear extension then yields an isomorphism
	\begin{equation}
		| \mathcal{C} | \approx | \mathcal{C} \setminus H | \cup (| p'(H) | \times [0,1]).
	\end{equation}		
	Moreover $| p'(H) |$ is a cuboid in $|\mathcal{C} \setminus H |$.
\end{lem}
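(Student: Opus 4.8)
The plan is to pin down the combinatorial picture first, which comes down to showing that the hyperplane $\mathfrak h$ meets \emph{every} vertex of $H$. Write $p\colon G\to H$ and $p'\colon G\to H'$ for the gate maps; these exist because $H$ and $H'$ are convex, hence gated (Lemma~\ref{lem:convex subgraphs}). I would show $p(H')=H$ as follows. Fix $v\in H$ and let $\mathfrak k$ be a hyperplane crossed by some geodesic from $v$ to its gate $p'(v)$. Such a geodesic crosses exactly the hyperplanes separating $v$ from the gated set $H'$: if $\mathfrak k$ failed to separate $v$ from some $w\in H'$, then $\mathfrak k$ would separate $p'(v)$ from $w$, so concatenating the geodesic $v\to p'(v)$ with a geodesic $p'(v)\to w$ (which is geodesic since $p'(v)$ is the gate) would cross $\mathfrak k$ twice, impossible. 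Hence the side $K$ of $\mathfrak k$ containing $v$ satisfies $K=V(G)\setminus K'\subseteq V(G)\setminus H'=H$ because complementary halfspaces partition the vertices and $H'\subseteq K'$; minimality of $H$ then forces $K=H$, so $\mathfrak k=\mathfrak h$. Thus $\mathfrak h$ is the unique hyperplane separating $v$ from $p'(v)$, so $d(v,p'(v))=1$, the edge $v\,p'(v)$ is dual to $\mathfrak h$, and $v=p(p'(v))\in p(H')$.

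Next I would upgrade this to the stated graph isomorphism. Exactly as in the proof of Lemma~\ref{lem:halfspaces and hyperplanes}, Lemma~\ref{lem:gates of gates} gives $p\circ p'=\mathrm{id}$ on $p(H')=H$ and $p'\circ p=\mathrm{id}$ on $p'(H)$, so $q:=p'|_H\colon H\to p'(H)$ is a bijection with inverse $p|_{p'(H)}$; both are $1$-Lipschitz for the path metric and $H$, $p'(H)$ are convex hence isometrically embedded, so $q$ is an isometry, hence a graph isomorphism onto the subgraph $p'(H)$, which is connected by Lemma~\ref{lem:convex image}. Define $\varphi\colon G\to(G\setminus H)\cup(p'(H)\times\{0,1\})$ by $\varphi=\mathrm{id}$ on $H'$ (the ``$\times\{0\}$'' copy) and $\varphi(v)=(q(v),1)$ for $v\in H$. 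Edges inside $H'$ and inside $H$ are respected by the previous sentence; an edge $vw$ with $v\in H$, $w\in H'$ is dual to $\mathfrak h$, so $w$ is the (unique) nearest point of $H'$ to $v$, i.e.\ $w=p'(v)=q(v)$, and $vw$ is sent to the vertical edge $(q(v),1)(q(v),0)$; conversely every edge of the target arises this way, so $\varphi$ is an isomorphism. Since the cube complex structure of a CAT(0) cube complex is determined by its underlying median graph (cubes are spanned on sub-hypercubes) and $(G\setminus H)\cup(p'(H)\times\{0,1\})$ is the underlying graph of the cube complex $(\mathcal C\setminus H)\cup(p'(H)\times\mathcal I)$, the isomorphism $\varphi$ is the $1$-skeleton of a cubical isomorphism, whose linear extension is the asserted homeomorphism $|\mathcal C|\approx|\mathcal C\setminus H|\cup(|p'(H)|\times[0,1])$.

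Finally, that $|p'(H)|$ is a cuboid in $|\mathcal C\setminus H|$ follows because $p'(H)$ is connected and, for each cube $C$ of $\mathcal C\setminus H$, the vertex set $p'(H)\cap C$ is convex inside $C$ (cubes being convex subcomplexes of $G$, so $I_C=I_G$ on them), and a convex subset of the median algebra $\{0,1\}^n$ is a subcube, whence $|p'(H)|\cap C$ is a face of $C$. The main obstacle is the content of the first paragraph: using minimality of $H$ to see that the carrier of $\mathfrak h$ exhausts $H$ and that $p'$ restricts to an isomorphism from $H$ onto $p'(H)$. Once the two halfspaces are matched this tightly, passing to geometric realizations and checking the cuboid condition are routine applications of standard median-graph facts.
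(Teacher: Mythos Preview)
Your proof is correct and follows essentially the same route as the paper's: gate maps $p,p'$, minimality of $H$ to force $p(H')=H$ (equivalently $d(v,p'(v))=1$ for all $v\in H$), Lemma~\ref{lem:gates of gates} for the bijection $H\leftrightarrow p'(H)$, and Lemma~\ref{lem:convex image} for the cuboid claim. Your argument for the first step---counting hyperplanes along the geodesic $v\to p'(v)$---is a minor variant of the paper's (which instead separates a hypothetical $x\in H\setminus p(H')$ from $p'(H)$ by a hyperplane), but the use of minimality is identical, and you in fact supply more detail than the paper on why convexity of $p'(H)$ forces $|p'(H)|\cap C$ to be a face of each cube $C$.
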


\begin{proof}
	Denote by $H'$ the complement of $H$. From Lemma~\ref{lem:convex subgraphs} we know that there are gate functions $p\colon G \to H$ and $p' \colon G \to H'$. First we show that $p(H')=H$. Indeed if there is $x \in H \setminus p(H')$ we find some hyperplane $\mathfrak{h}_0$ separating $x$ from $p(H')$. Label the halfspace containing $x$ with $H_0$. But then $H_0 \cap H'$ is mapped into $H_0$ and therefore must be empty. We have $H_0 \subset H$, a contradiction to the minimality of $H$. 

	By Lemma~\ref{lem:gates of gates} it therefore follows that $p' \colon H \to p'(H)$ is an isomorphism between the median graphs $H$ and $p'(H)$. Since every point in $H$ has distance 1 to its image we get that $p'(H) \cup H$ is isomorphic to $p'(H) \times \{0, 1\}$ which induces an isomorphism
	\begin{equation}
		G \approx (G \setminus H) \cup p'(H) \times \{0, 1\}
	\end{equation}
Its linear extension then gives us the desired isomorphism 
	\begin{equation}
		|\mathcal{C}| \approx |\mathcal{C} \setminus H | \cup (|p'(H)|\times [0,1]).
	\end{equation}
	As we have proven in Lemma~\ref{lem:convex image} $p(H)$ is a convex subset of $G$ and therefore its linear extension is a cuboid in $| \mathcal{C} \setminus H |$.
\end{proof}

\begin{cor}\label{cor:multicollaps}
	Let $\mathcal{C}$ be a CAT(0) cube complex and $\mathcal{H}$ a familiy of pairwise disjoint extremal hyperplanes. Denote by $H(\mathfrak{h})$ the minimal halfspace bounded by $\mathfrak{h}$ (if both halfspaces are minimal, fix a specific choice). Moreover let $p_\mathfrak{h} \colon \mathcal{C} \to \mathcal{C}\setminus H(\mathfrak{h})$ be the projection onto the complement of $H(\mathfrak{h})$ and $\mathcal{C}'= \mathcal{C}\setminus \bigcup_{\mathfrak{h}\in \mathcal{H}} H(\mathfrak{h})$. Then there is an isomorphism
	\begin{equation}
		|\mathcal{C}| \approx |\mathcal{C}'| \cup \bigcup_{\mathfrak{h}\in \mathcal{H}} \left(| p_\mathfrak{h}(H(\mathfrak{h}))| \times [0,1] \right).
	\end{equation}
\end{cor}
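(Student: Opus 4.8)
The plan is to apply Lemma~\ref{lem:simple collaps} to all hyperplanes of $\mathcal{H}$ at once. Beyond that lemma, the only content of the corollary is that these modifications — cutting off the minimal halfspace $H(\mathfrak{h})$ and re-attaching $p_\mathfrak{h}(H(\mathfrak{h})) \times [0,1]$ — do not interfere with each other, and this is precisely what the hypotheses ``pairwise disjoint'' and ``extremal'' provide.

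First I would establish the necessary separation facts. For distinct $\mathfrak{h}, \mathfrak{h}' \in \mathcal{H}$, since the two hyperplanes do not cross, one of the four intersections of a halfspace bounded by $\mathfrak{h}$ with a halfspace bounded by $\mathfrak{h}'$ is empty; the three options $H(\mathfrak{h}) \subseteq H(\mathfrak{h}')$, $H(\mathfrak{h}') \subseteq H(\mathfrak{h})$ and $(|\mathcal{C}| \setminus H(\mathfrak{h})) \subseteq H(\mathfrak{h}')$ each give a proper containment of a minimal or a maximal halfspace inside another halfspace, contradicting extremality; hence $H(\mathfrak{h}) \cap H(\mathfrak{h}') = \emptyset$. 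Using moreover that each edge of $\mathcal{C}$ is dual to exactly one hyperplane, I would check: (a) no edge joins a vertex of $H(\mathfrak{h})$ to a vertex of $H(\mathfrak{h}')$, since it would be dual to both; in particular the gate $p_\mathfrak{h}(v)$ of a vertex $v \in H(\mathfrak{h})$ — a neighbour of $v$ across $\mathfrak{h}$, as in the proof of Lemma~\ref{lem:simple collaps} — lies in no $H(\mathfrak{h}')$, so $p_\mathfrak{h}(H(\mathfrak{h})) \subseteq \mathcal{C}'$; and (b) a cube $C$ meeting $H(\mathfrak{h})$ is contained in $H(\mathfrak{h}) \cup p_\mathfrak{h}(H(\mathfrak{h}))$ — if $\mathfrak{h}$ does not cross $C$ then $C \subseteq H(\mathfrak{h})$, and if $\mathfrak{h}$ crosses $C$ then one facet of $C$ lies in $H(\mathfrak{h})$ while the opposite facet, being adjacent to $\mathfrak{h}$, lies in $p_\mathfrak{h}(H(\mathfrak{h}))$ — so by (a) no such cube meets any other $H(\mathfrak{h}')$. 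It follows that $\mathcal{C}' = \bigcap_{\mathfrak{h}\in\mathcal{H}} (\mathcal{C} \setminus H(\mathfrak{h}))$ is a convex (hence CAT(0)) subcomplex, nonempty since $\mathcal{C}$ is connected while by (a) no edge crosses from one $H(\mathfrak{h})$ into another, and that every vertex and every cube of $\mathcal{C}$ lies either in $\mathcal{C}'$ or in the subcomplex supported on $H(\mathfrak{h}) \cup p_\mathfrak{h}(H(\mathfrak{h}))$ for exactly one $\mathfrak{h}$.

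Next I would assemble the isomorphism. By Lemma~\ref{lem:simple collaps} — whose proof shows $p_\mathfrak{h}$ restricts to an isomorphism $H(\mathfrak{h}) \to p_\mathfrak{h}(H(\mathfrak{h}))$ realizing the subcomplex on $H(\mathfrak{h}) \cup p_\mathfrak{h}(H(\mathfrak{h}))$ as $p_\mathfrak{h}(H(\mathfrak{h})) \times \{0,1\}$ — this subcomplex is isomorphic to $p_\mathfrak{h}(H(\mathfrak{h})) \times \mathcal{I}$, attached to $\mathcal{C}'$ along $p_\mathfrak{h}(H(\mathfrak{h})) \times \{0\}$. Performing all these identifications simultaneously — they agree on the common part $\mathcal{C}'$ and have otherwise disjoint images, by the cube decomposition just obtained — yields an isomorphism of underlying graphs $G \cong G' \cup \bigcup_{\mathfrak{h}} (p_\mathfrak{h}(H(\mathfrak{h})) \times \{0,1\})$, and its linear extension is the asserted isomorphism $|\mathcal{C}| \approx |\mathcal{C}'| \cup \bigcup_{\mathfrak{h}} (|p_\mathfrak{h}(H(\mathfrak{h}))| \times [0,1])$. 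Finally each $p_\mathfrak{h}(H(\mathfrak{h}))$ is convex in $G$ by Lemma~\ref{lem:convex image} (applied to $H(\mathfrak{h})$ and its complement), hence convex in $\mathcal{C}'$, so $|p_\mathfrak{h}(H(\mathfrak{h}))|$ is a cuboid in $|\mathcal{C}'|$.

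The main difficulty I expect is not a computation but the bookkeeping caused by $\mathcal{H}$ possibly being infinite: one cannot obtain the statement by induction on $|\mathcal{H}|$ and repeated use of Lemma~\ref{lem:simple collaps}, so the simultaneous version must be justified directly, and it is exactly the separation facts of the second step — that distinct blocks $H(\mathfrak{h}) \cup p_\mathfrak{h}(H(\mathfrak{h}))$ overlap only inside $\mathcal{C}'$ and that no cube straddles two of them — that make this possible.
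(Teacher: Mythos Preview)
Your proposal is correct and follows essentially the same strategy as the paper: establish that the minimal halfspaces $H(\mathfrak{h})$ are pairwise disjoint and that each $p_\mathfrak{h}(H(\mathfrak{h}))$ lands in $\mathcal{C}'$, then glue the individual isomorphisms from Lemma~\ref{lem:simple collaps} into one map. The only noticeable difference is in how the separation facts are obtained: the paper proves a single claim (the image of any gate projection onto a halfspace is disjoint from every minimal halfspace with disjoint bounding hyperplane) and applies it twice, using that $H(\mathfrak{h})$ itself is such an image by the proof of Lemma~\ref{lem:simple collaps}; you instead argue the disjointness of the $H(\mathfrak{h})$ directly from the halfspace trichotomy and extremality, and then use the edge/dual-hyperplane observation to place $p_\mathfrak{h}(H(\mathfrak{h}))$ inside $\mathcal{C}'$. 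Both routes are short and equivalent; yours is slightly more self-contained, while the paper's is a touch more economical.
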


\begin{proof}
	Let $p\colon \mathcal{C} \to H$ be a projection map onto some halfspace $H$. We claim that its image is disjoint to any minimal halfspace $H'$ with $\partial H \cap \partial H' = \emptyset$. Indeed for $x \in p(H)$ there is some edge $e$ that is orthogonal to $H$ and contains $x$. Thus if $x$ is contained in the halfspace $H'$, we also have $e \in H'$ and hence $\partial H \subset H'$ since $\partial H \cap \partial H' = \emptyset$. But then $H$ or $H^C$ is completely contained in $H'$, contradicting minimality.
	Together with the observation that a minimal halfspace is the image of a projection map in the previous proof, it follows that $H(\mathfrak{h}) \cap H(\mathfrak{h'})= \emptyset$ and $p_\mathfrak{h} (H(\mathfrak{h})) \cap H(\mathfrak{h'})= \emptyset$ for $\mathfrak{h}\neq \mathfrak{h'} \in \mathcal{H}$. Therefore we have $p_\mathfrak{h}(H(\mathfrak{h})) \subset \mathcal{C}'$. By Lemma~\ref{lem:simple collaps} there is some isomorphism $\Phi_\mathfrak{h} \colon \mathcal{C} \to (\mathcal{C}\setminus H(\mathfrak{h}) \cup (p_\mathfrak{h}(H(\mathfrak{h}))\times\{0,1\})$. We combine them to the following canonical isomorphism $\Phi \colon 
\mathcal{C} \to \mathcal{C}' \cup \bigcup_{\mathfrak{h}\in \mathcal{H}} \left( p_\mathfrak{h}(H(\mathfrak{h})) \times \{0,1\} \right)$ with
\begin{equation}
	\Phi(x) = \begin{cases}
				x, &x\in \mathcal{C}', \\
				\Phi_\mathfrak{h}(x), &x \in H(\mathfrak{h}).
			\end{cases}		
\end{equation}
\end{proof}

We are now in a position to prove the second implication in Theorem~\ref{thm:collapsible}.

\begin{thm}\label{thm:collapsible CAT(0) complexes}
	Every CAT(0) cube complex $\mathcal{C}$ with finite width and finitely colorable hyperplanes is regularly collapsible.
\end{thm}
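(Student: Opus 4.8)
The plan is to reverse the collapsing process: starting from $\mathcal{C}$ I would repeatedly delete well-chosen families of extremal hyperplanes via Corollary~\ref{cor:multicollaps} until nothing is left, and then read the sequence backwards to obtain a regular decomposition. The entire difficulty is showing that this terminates, and this is where finite width and finite colorability come in. Fix a coloring $\Phi\colon\mathcal{H}\to\{1,\dots,n\}$ and induct on $w=\operatorname{width}(\mathcal{C})$. If $w=0$ then $\mathcal{C}$ has no hyperplanes (Lemma~\ref{lem:halfspaces and hyperplanes}), hence no edges, hence is a single vertex since CAT(0) complexes are connected; this is trivially regularly collapsible. For $w\ge 1$ I would perform one \emph{round} consisting of $n$ applications of Corollary~\ref{cor:multicollaps}: for $c=1,\dots,n$ in turn, collapse the family of \emph{all} extremal hyperplanes of color $c$ in the complex obtained so far. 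Hyperplanes of one color are pairwise disjoint, so Corollary~\ref{cor:multicollaps} applies at each step; and by Lemma~\ref{lem:simple collaps} together with Lemma~\ref{lem:convex image}, such a step is, read in reverse, exactly a gluing of pieces of the form (cuboid)$\times[0,1]$, as needed for a regular decomposition. Writing $\mathcal{C}^{\ast}$ for the complex after one round, $\mathcal{C}^{\ast}$ is again CAT(0) (a convex subcomplex), still finitely colorable (restrict $\Phi$; the crossing graph only shrinks), and the point is that $\operatorname{width}(\mathcal{C}^{\ast})\le w-1$. The induction hypothesis then applies to $\mathcal{C}^{\ast}$, and concatenating its regular decomposition with the reversed round yields one for $\mathcal{C}$.

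The width claim rests on two observations. \emph{(a) Every halfspace that is minimal in $\mathcal{C}$ is deleted during the round.} If $H$ is minimal in $\mathcal{C}$ and $\mathfrak{h}=\partial H$ has color $c$, then the sub-steps of colors $\neq c$ remove only hyperplanes of those colors (Lemma~\ref{lem:hyperplanes in collapsible complex}), so $\mathfrak{h}$ survives into sub-step $c$; moreover passing to a subcomplex in which $\mathfrak{h}$ still occurs keeps $H$ a halfspace and keeps it minimal, so at sub-step $c$ the hyperplane $\mathfrak{h}$ is extremal and gets collapsed. \emph{(b) Collapsing an extremal hyperplane preserves crossings among the surviving hyperplanes.} If $\mathfrak{a},\mathfrak{b}$ cross in $\mathcal{C}$ and both survive the deletion of $H(\mathfrak{h})$, then neither lies entirely inside the minimal halfspace $H(\mathfrak{h})$ (that would contradict minimality), so either the square witnessing the crossing already lies on the retained side of $\mathfrak{h}$, or $\mathfrak{a}$ and $\mathfrak{b}$ both cross $\mathfrak{h}$ and Theorem~\ref{thm:hyperplanes}(iv) provides a common $3$-cube of $\mathfrak{a},\mathfrak{b},\mathfrak{h}$ one of whose square faces lies on the retained side and is dual to both $\mathfrak{a}$ and $\mathfrak{b}$. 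Iterating (b) through the round, the crossing relation on the hyperplanes of $\mathcal{C}^{\ast}$ is exactly the restriction of the one on $\mathcal{C}$.

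Now suppose $\mathcal{C}^{\ast}$ contained a chain of halfspaces $K_1\subsetneq\dots\subsetneq K_w$. Their boundary hyperplanes are pairwise disjoint, hence by (b) pairwise disjoint already in $\mathcal{C}$, so the halfspaces $\tilde K_i$ they determine in $\mathcal{C}$ are pairwise nested; the nesting must respect the order $K_1,\dots,K_w$, since $\tilde K_i\cap\tilde K_{i+1}=\emptyset$, $\tilde K_i\cup\tilde K_{i+1}=|\mathcal{C}|$, or $\tilde K_{i+1}\subsetneq\tilde K_i$ would each contradict $K_i\subsetneq K_{i+1}$ after restriction to $\mathcal{C}^{\ast}$. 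Thus $\mathcal{C}$ contains a chain of length $w$; being of maximal length, its bottom halfspace is minimal in $\mathcal{C}$, so by (a) its boundary hyperplane was deleted during the round, contradicting that it bounds $K_1$ in $\mathcal{C}^{\ast}$. Hence $\operatorname{width}(\mathcal{C}^{\ast})\le w-1$, the induction goes through after at most $w$ rounds, and the reversed sequence of multicollapses is the desired regular decomposition.

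I expect observation (b) to be the real work: one must genuinely rule out that collapsing creates new crossings among the survivors, and this relies on extremality (minimality of the collapsed halfspace) — without it, passing to a convex subcomplex can make crossing hyperplanes disjoint, and then width is no longer controlled. Lifting a maximal chain of $\mathcal{C}^{\ast}$ back to $\mathcal{C}$, and verifying that each multicollapse has exactly the shape demanded by the definition of a regular decomposition (the relevant cuboid being the gate-projection image supplied by Lemma~\ref{lem:simple collaps} and Lemma~\ref{lem:convex image}), are then routine.
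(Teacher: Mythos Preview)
Your proposal is correct and follows essentially the same strategy as the paper: induct on the width, and in the inductive step perform one multicollapse per color via Corollary~\ref{cor:multicollaps}, then argue the width strictly drops. The only differences are cosmetic. At sub-step $c$ you collapse all hyperplanes of color $c$ that are extremal in the \emph{current} complex, whereas the paper fixes once and for all the set $\mathcal{H}'$ of hyperplanes extremal in the original $\mathcal{C}$ and at sub-step $c$ collapses only $\mathcal{H}'\cap\Phi^{-1}(c)$ (restricted to the current complex); your variant may remove more, which can only help. Your observations (a) and (b) spell out what the paper compresses into the single sentence ``every chain of halfspaces in $\mathcal{C}_n$ is contained in some chain in $\mathcal{C}$ and there every chain ends with some halfspace bounded by some hyperplane in $\mathcal{H}'$''; in particular (b) is exactly what is needed to lift a chain in $\mathcal{C}^{\ast}$ back to a chain in $\mathcal{C}$. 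One wording slip in your final paragraph: you say (b) must ``rule out that collapsing creates new crossings'', but what (b) actually prevents is that collapsing \emph{destroys} crossings---as your very next clause (``passing to a convex subcomplex can make crossing hyperplanes disjoint'') correctly states.
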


\begin{proof}
	We prove this statement by induction on the width of $\mathcal{C}$. A $CAT(0)$ cube complex with width $0$ is a point and hence collapsible.
	So let $\mathcal{C}$ be a CAT(0) cube complex of width $w$ and coloring $\Phi \colon \mathcal{H} \to \{1, \ldots, n\}$. Consider the set $\mathcal{H}'$ of hyperplanes $\mathfrak{h}$ which bound a minimal halfspace $H(\mathfrak{h})$. Define $\mathcal{H}_1 = \mathcal{H}' \cap \Phi^{-1} (1)$ and $\mathcal{C}_1= \mathcal{C} \setminus \bigcup_{\mathfrak{h}\in \mathcal{H}_1} H(\mathfrak{h})$. Then 
	\begin{equation}
	|\mathcal{C}| \approx |\mathcal{C}_1| \cup \bigcup_{\mathfrak{h}\in \mathcal{H}_1} |p(H(\mathfrak{h}))| \times [0,1]
	\end{equation}		
	according to Corollary~\ref{cor:multicollaps}. This means that $\mathcal{C}$ collapses into $\mathcal{C}_1$.
	
	Observe that the hyperplanes in $\mathcal{C}_1$ are exactly the restriction of the hyperplanes in $\mathcal{C}$ which are not contained in $\mathcal{H}_1$. Therefore we can repeat the previous step starting with $\mathcal{C}_1$ and $\mathcal{H}_2= \{ \mathfrak{h} \cap \mathcal{C}_1 : \mathfrak{h} \in \mathcal{H}' \cap \Phi^{-1} (2) \}$. We get some CAT(0) cube complex $\mathcal{C}_2 = \mathcal{C}_1 \setminus \bigcup_{\mathfrak{h}\in \mathcal{H}_2} H(\mathfrak{h})$ which is the result of performing a multicollaps on $\mathcal{C}_1$.
	
	Proceeding this way we finally arrive by performing finitely many multicollapses at the CAT(0) cube complex $\mathcal{C}_n = \mathcal{C} \setminus \bigcup_{\mathfrak{h}\in \mathcal{H}'} H(\mathfrak{h})$. For completing the induction step it remains to show that $\mathcal{C}_n$ has strictly smaller width than $\mathcal{C}$. But this is true since every chain of halfspaces in $\mathcal{C}_n$ is contained in some chain in $\mathcal{C}$ and there every chain ends with some halfspace bounded by some hyperplane in $\mathcal{H}'$.
\end{proof}

\textbf{Acknowledgments.} I would like to thank Prof. Dr. Urs Lang for reading this and earlier versions of the paper and making helpful suggestions. This work was supported by the Swiss National Science Foundation.
	
\newpage

\nocite{*}

\bibliographystyle{amsplain}
\bibliography{mybib}

\small
\noindent\textsc{Department of Mathematics, ETH Z\"urich, 8092 Z\"urich, Switzerland}\\
\textit{E-mail address}: benjamin.miesch@math.ethz.ch

\end{document}